\title{The completion numbers of Hamiltonicity and pancyclicity in random graphs}
\author{
Yahav Alon
\thanks{School of Mathematical Sciences, Raymond and Beverly Sackler Faculty of Exact Sciences, Tel Aviv University,
Tel Aviv, 6997801, Israel. Email: yahavalo@tauex.tau.ac.il.}
\and Michael Anastos
\thanks{Institute of Science and Technology Austria, Klosterneurburg 3400, Austria. Email: michael.anastos@ist.ac.at.  This project has received funding from the European Union's Horizon 2020 research and innovation
programme under the Marie Sk\l{}odowska-Curie grant agreement No 101034413
\includegraphics[width=5.5mm, height=4mm]{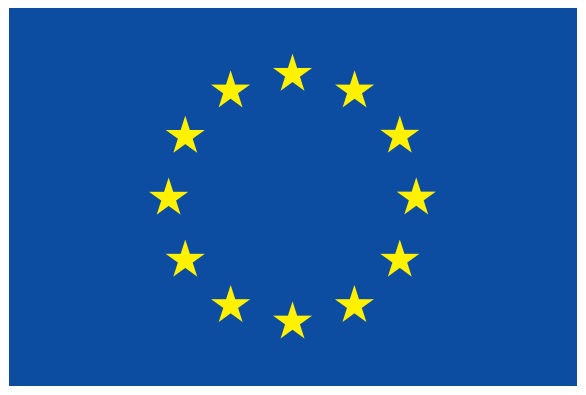}.}}
\tikzset{snake it/.style={decorate, decoration=snake}}
\begin{document}
\maketitle
\newtheorem{thm}{Theorem}
\newtheorem*{thm*}{Theorem}
\newtheorem{propos}{Proposition}
\newtheorem{defin}{Definition}
\newtheorem{lemma}{Lemma}[section]
\newtheorem{corol}{Corollary}[section]
\newtheorem{remark}{Remark}[section]

\newtheorem{corol*}{Corollary}
\newtheorem{thmtool}{Theorem}[section]
\newtheorem{corollary}[thmtool]{Corollary}
\newtheorem{lem}[thmtool]{Lemma}
\newtheorem{defi}[thmtool]{Definition}
\newtheorem{prop}[thmtool]{Proposition}
\newtheorem{clm}[thmtool]{Claim}
\newtheorem{conjecture}{Conjecture}
\newtheorem{problem}{Problem}
\newtheorem{observation}[thmtool]{Observation}
\newcommand{\Proof}{\noindent{\bf Proof.}\ \ }
\newcommand{\Remarks}{\noindent{\bf Remarks:}\ \ }
\newcommand{\Remark}{\noindent{\bf Remark:}\ \ }

\newcommand{\Dist}[1]{\mathsf{#1}}
\newcommand{\Bin}{\Dist{Bin}}
\newcommand{\cH}{\mathcal{H}}
\newcommand{\cL}{\mathcal{L}}
\newcommand{\cE}{\mathcal{E}} 
\newcommand{\cP}{\mathcal{P}}
\newcommand{\cT}{\mathcal{T}} 
\newcommand{\cc}{\gamma} 

\newcommand{\pr}{\mathbb{P}}
\newcommand{\sm}{\text{SMALL}}
\newcommand{\cl}{\text{CLOSE}}
\newcommand{\bd}{\text{BAD}}
\newcommand{\lrg}{\text{LARGE}}
\newcommand{\enm}{\text{END}}
\newcommand{\Lmax}{L_{\max}}

\newcommand{\YA}[1]{{\color{red} \small{(YA: #1)}}}
\newcommand{\MA}[1]{{\color{blue} \small{(MA: #1)}}}

\newcommand\bfrac[2]{\left(\frac{#1}{#2}\right)}

\begin{abstract}

Let $\mu(G)$ denote the minimum number of edges whose addition to $G$ results in a Hamiltonian graph, and let $\hat{\mu}(G)$ denote the minimum number of edges whose addition to $G$ results in a pancyclic graph. We study the distributions of $\mu (G),\hat{\mu}(G)$ in the context of binomial random graphs.

Letting $d=d(n) \coloneqq n\cdot p$, we prove that there exists a function $f:\mathbb{R}^+\to [0,1]$ of order $f(d) = \frac{1}{2}de^{-d}+e^{-d}+O(d^6e^{-3d})$ such that, if $G\sim G(n,p)$ with $20 \le d(n) \le 0.4 \log n$, then with high probability $\mu (G)= (1+o(1))\cdot f(d)\cdot n$.

Let $n_i(G)$ denote the number of degree $i$ vertices in $G$. A trivial lower bound on $\mu(G)$ is given by the expression $n_0(G) + \lceil \frac{1}{2}n_1(G) \rceil$. We show that in the random graph process $\{ G_t\}_{t=0}^{\binom{n}{2}}$ with high probability there exist times $t_1,t_2$, both of order $(1+o(1))n\cdot \left( \frac{1}{6}\log n + \log \log n \right)$, such that $\mu(G_t)=n_0(G_t) + \lceil \frac{1}{2}n_1(G_t) \rceil$ for every $t\ge t_1$ and $\mu(G_t)>n_0(G_t) + \lceil \frac{1}{2}n_1(G_t) \rceil$ for every $t\le t_2$. The time $t_i$ can be characterized as the smallest $t$ for which $G_t$ contains less than $i$ copies of a certain problematic subgraph. In particular, this implies that the hitting time for the existence of a Hamilton path is equal to the hitting time of $n_1(G) \le 2$ with high probability.
For the binomial random graph, this implies that if $np-\frac{1}{3}\log n - 2\log \log n \to \infty$ and $G\sim G(n,p)$ then, with high probability, $\mu (G) = n_0(G) + \lceil \frac{1}{2}n_1(G) \rceil$.

For completion to pancyclicity, we show that if $G\sim G(n,p)$ and $np\ge 20$ then, with high probability, $\hat{\mu} (G)=\mu (G)$.

Finally, we present a polynomial time algorithm such that, if $G\sim G(n,p)$ and $np\ge 20$, then, with high probability, the algorithm returns a set of edges of size $\mu (G)$ whose addition to $G$ results in a pancyclic (and therefore also Hamiltonian) graph.
\end{abstract}

\section{Introduction} \label{sec-intro}

Hamilton cycles are a central subject in the study of graphs, a fact which extends to the study of random graphs, with numerous and significant results regarding Hamilton cycles in random graphs obtained over the years.

We write $G\sim G(n,p)$ to denote that $G$ is distributed according to the binomial random graph model $G(n,p)$, i.e. $G$ is a random graph on $n$ vertices, where each edge of $G$ appears independently at random with probability $p$.  
A classical result by Koml\'{o}s and Szemer\'{e}di \cite{KS83}, and independently by Bollob\'{a}s \cite{B84}, states that if $G\sim G(n,p)$ then
\begin{eqnarray}\label{eq:ham}
\lim _{n\to \infty} \pr (G\text{ is Hamiltonian}) =
	\begin{cases}
        1 & \text{if } np-\log n -\log \log n \to \infty ;\\
        e^{-e^{-c}} & \text{if } np-\log n -\log \log n \to c ;\\
        0 & \text{if } np-\log n -\log \log n \to -\infty .
    \end{cases}
\end{eqnarray}

At and below the Hamiltonicity threshold, that is, when $np-\log n -\log \log n$ does not tend to plus infinity, one can examine the question of how far the random graph is from being Hamiltonian. In this paper, we quantify this distance by the notion of the \emph{completion number}.

\begin{defin}
Let $G$ be a graph and let $\mathcal{P}$ be a monotone increasing graph property. The \emph{completion number} of $G$ with respect to $\mathcal{P}$ is the minimum number of edges whose addition to $G$ results in a graph that satisfies $\mathcal{P}$.
\end{defin}

In other words, the completion number of $G$ with respect to $\cP$ is the edge distance between $G$ and a nearest graph that satisfies $\cP$. For example, the completion number of $G$ with respect to connectivity is equal to the number of connected components in $G$, minus 1. The completion number of $G$ with respect to containing a (near) perfect matching is equal to $\lfloor \frac{n}{2} \rfloor - \nu (G)$, where $\nu (G)$ is the maximum size of a matching in $G$. We let $\mu (G)$ denote the completion number of $G$ with respect to Hamiltonicity.

A disjoint path cover of $G$ is a set of vertex disjoint paths that cover all the vertices of $G$, here we use the convention that an isolated vertex corresponds to a path of length $0$. In addition we slightly abuse the notation and say that $G$ has a disjoint path cover of size $0$, whenever $G$ is Hamiltonian. Evidently, $\mu (G)$ is equal to the minimum size of a disjoint path cover of $G$. 

The result of Koml\'{o}s and Szemer\'{e}di, and of Bollob\'{a}s, stated above, implies that  if $G\sim G(n,p)$ and $np-\log n-\log\log n \to \infty$, then $\mu(G)=0$ with high probability\footnote{We say that a sequence of events $\{\cE_n\}_{n\geq 1}$ occurs with high probability if $\lim_{n\to \infty} \pr(\cE_n)=1$.}. Recently, Alon and Krivelevich \cite{AK} showed that, if $c$ is a large enough constant with respect to $\varepsilon$, then  $(1-\varepsilon )\frac{1}{2}ce^{-c}n \le \mu (G(n,c/n)) \le (1+\varepsilon )\frac{1}{2}ce^{-c}n$ with high probability.

Our first main result in this paper extends this last result in two ways, first by extending the range of edge probabilities $p$ for which this approximation holds, and second by replacing the error term $\varepsilon$ with an $o(1)$ error term, and the estimation $\frac{1}{2}ce^{-c}$ with a more accurate function $f(d)$ of $d \coloneqq np$.

Unfortunately, we are not able to derive a close form for the function $f(d)$. Instead, in the proof of  Theorem \ref{thm:main1}, we implicitly give an algorithm that calculates $f(d)$ within arbitrary accuracy by expressing it as a sum of terms of the form $c_{ij}d^ie^{-jd}$ with $\sum_{j\geq k}\sum_{i\geq 0}c_{ij}d^ie^{-jd} \leq 0.8^k$, for $k\geq 1$. The terms $c_{ij}d^ie^{-jd}$ will correspond to weighted sums of rooted subgraph counts. In Theorem \ref{thm:main1}, stated below, we give the first such few terms.

For $a\in \{d,n\}$ we write $g(a)=O_r(h(a))$ to mean that $|g(a)|\leq C_ah(a)$ for some constant $C_a$ that depends on $a$. We drop the subscript on $O(\cdot)$ whenever that is $n$.  

\begin{thm}\label{thm:main1}
Let $p=p(n)$ be such that\footnote{Here and going forward, all logarithms are assumed to be in the natural base.} $20 \le np \le 0.4  \log n$, and denote $d=d(n) \coloneqq np$. Let $G\sim G(n,p)$. Then, there exists a function $f:\mathbb{R}^+\to [0,1]$ of the order
$$
f(d) = \frac{1}{2}de^{-d} + e^{-d}+\left(\frac{1}{12}d^6 + \frac{1}{4}d^5 +\frac{1}{4}d^4 +\frac{1}{12}d^3 \right) \cdot e^{-3d} + O_d\left( d^{15}e^{-4d} \right)
$$
such that $\mu (G)= (1+o(1))\cdot f(d)\cdot n$ with high probability.
\end{thm}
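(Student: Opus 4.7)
The plan is to prove matching upper and lower bounds on $\mu(G)$, each of the form $(1+o(1)) f(d) n$. Since $\mu(G)$ coincides, up to an additive constant of at most one, with the minimum size of a vertex-disjoint path cover of $G$, it suffices to estimate the latter quantity.

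For the lower bound, I would identify a short list of local obstructions that each force extra paths in any path cover: isolated vertices contribute $n_0(G)$; degree-one vertices contribute $\lceil \tfrac{1}{2} n_1(G) \rceil$, since at most two of them can lie on one path (as its endpoints); and small isolated or near-isolated subgraphs, such as isolated edges, isolated short paths, isolated triangles, and certain local configurations around degree-two vertices, contribute additional penalties of lower order. Summing the expected contributions of these obstructions and invoking the Chen--Stein method (or a direct second-moment argument) would show that the aggregate lower bound is concentrated around $(1+o(1)) f(d) n$ with high probability. The leading terms $e^{-d}$ and $\tfrac{1}{2} d e^{-d}$ of $f(d)$ arise from the densities of isolated and degree-one vertices, while the $(\tfrac{1}{12} d^6 + \tfrac{1}{4} d^5 + \tfrac{1}{4} d^4 + \tfrac{1}{12} d^3)\,e^{-3d}$ correction corresponds to an explicit finite family of small dense configurations whose expected count is of that order.

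For the upper bound, I would construct a near-optimal path cover explicitly. First, set aside every vertex belonging to one of the obstructions above; the induced subgraph $G'$ on the remaining vertices has minimum degree at least two and inherits the expansion properties of $G(n,p)$ at $d \geq 20$. An application of the P\'osa rotation-extension technique, quantified for this sparse regime, would produce either a Hamilton path in $G'$ or a path cover of $G'$ of size $o(f(d) n)$. The set-aside vertices would then be reinserted into the long path(s), exploiting the abundance of insertion spots that $G(n,p)$ provides w.h.p.\ --- pairs of consecutive path-vertices each adjacent to the bad vertex, allowing it to replace an edge of the path --- or, where this fails, paying the extra cost that the lower bound already charges. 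The insertion protocol is designed so that the cost paid at each obstruction exactly matches its lower-bound contribution, closing the gap.

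The principal difficulty is the precise enumeration and bookkeeping of the $e^{-3d}$ correction: one must identify the complete finite family of small local structures that contribute at that order, verify that each is genuinely unavoidable on the lower-bound side, and design matching insertion procedures on the upper-bound side that achieve each individual cost. A secondary technical obstacle is that the entire argument must be uniform across the range $20 \leq d \leq 0.4 \log n$; at the lower end, both the density of obstructions and the expansion of the cleaned graph become delicate, requiring quantitative sparse-graph estimates (on edge boosters, expansion of small sets, and concentration of vertex counts) rather than appeals to the Hamiltonicity threshold.
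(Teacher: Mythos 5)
Your high-level outline --- sandwich $\mu(G)$ between a combinatorially defined lower bound and an explicit construction of a matching upper bound --- is the right shape, but two essential mechanisms are missing, and without them the argument does not close.

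The first gap is the \emph{cascading} problem in your cleaning step. You set aside the vertices belonging to obstructions and claim the induced subgraph $G'$ has minimum degree at least two. This is not true: removing a low-degree vertex can create new low-degree vertices among its neighbours, which in turn must be removed, and so on. This cascade is exactly why the paper does not define the cleaned graph by a single deletion pass, but rather via the \emph{strong $4$-core}: the maximal $S\subseteq V$ such that every vertex of $S\cup N(S)$ has at least four neighbours inside $S$. The resulting three-way partition $A(G)\cup B(G)\cup C(G)$ (residual vertices, border, core) is the object the whole proof is built around; the student's $G'$ is not well-defined in the same way and does not inherit the needed expansion.

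The second --- and more serious --- gap is the upper bound mechanism. You propose to build a long path in $G'$ by rotation-extension and then reinsert the set-aside vertices ``exploiting the abundance of insertion spots'' (pairs of consecutive path vertices both adjacent to the bad vertex). But by construction, vertices of $A(G)$ have \emph{no} edges into $C(G)$: all their edges stay inside $A\cup B$. A degree-one or degree-two vertex simply does not have the many neighbours on the long path that an insertion argument requires. These vertices cannot be inserted; they must sit at path endpoints or remain attached through their tiny $G^{AB}$-component. The paper's actual upper bound does something structurally different: it first finds a minimum-$A$-endpoint disjoint path cover $Q$ of $G^{AB}$, places a matching $M$ on the $B$-endpoints of the paths in $Q$, and then invokes a \emph{strong Hamiltonicity} lemma (Lemma \ref{lemma:4coreHmty}) asserting that $G[B\cup C]\cup M$ has a Hamilton cycle containing every edge of $M$. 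Replacing each $M$-edge by the corresponding path of $Q$ stitches everything into $\mu'(G)=\lceil\tfrac12 a(G)\rceil$ paths. The matching-containment property of the core is the crux of the theorem and has no counterpart in your proposal; a bare Hamilton-connectedness of the core would not suffice, because the $B$-endpoints must be met in prescribed pairs.

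A smaller issue: the coefficient $(\tfrac{1}{12}d^6+\tfrac14 d^5+\tfrac14 d^4+\tfrac{1}{12}d^3)e^{-3d}$ does not come from isolated triangles or generic ``small dense configurations'' (isolated $k$-cycles contribute only $O(d^k e^{-kd})$ in expectation, not $\Theta(\cdot)\cdot n$). It comes from counting $3$-prespiders: triples of degree-$\le2$ vertices with a common neighbour, which force one extra $A$-endpoint in any path cover of their $G^{AB}$-component. The factorisation $\tfrac{1}{3!}d^3(1+d)^3 e^{-3d}$ reflects exactly this: three low-degree vertices, each with $0$ or $1$ additional neighbour. Pinning down precisely these configurations, and showing via Lemma \ref{lemma:proof3comb} that for small tree components $a(T)=2n_0(T)+n_1(T)+s_3'(T)$, is what makes the error term computable; your second-moment sketch leaves this unidentified.

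So while your proposal gestures at a lower/upper bound sandwich, it is missing the strong 4-core decomposition, the matching-Hamiltonicity stitching mechanism that the upper bound actually relies on, and the precise identification of the local quantity ($3$-prespider count) whose expectation delivers the stated expansion of $f(d)$.
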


Our second main result of this paper provides a characterisation of $\mu (G(n,p))$ in the denser regime.
Let $n_i(G)$ denote the number of vertices in $G$ with degree $i$. Observe that, for every graph $G$, the inequality $\mu (G) \ge n_0(G)+\lceil \frac{1}{2} n_1 (G) \rceil$ holds, since all the degrees in a Hamiltonian graph are at least 2. The two leading terms $\frac{1}{2}de^{-d}n + e^{-d}n$ in $f(d)\cdot n$ correspond to the expectation of the random variable $n_0(G(n,p))+\lceil \frac{1}{2} n_1 (G(n,p)) \rceil$. Our next result shows that, in fact, if $np-\frac{1}{3}\log n -2\log \log n \to \infty$ and $G\sim G(n,p)$ then, with high probability, $\mu (G) = n_0(G)+\lceil \frac{1}{2} n_1 (G) \rceil$. Note that this complements Theorem \ref{thm:main1} in the range $np\ge 0.4\log n$, as the distribution of $n_i(G)$ is well known.

In fact, we prove something stronger as we consider the evolution of 
$\{\mu(G_t)\}_{t=0}^{\binom{n}{2}}$ at the regime where $t$ is sufficiently large, where $\{ G_t \} _{t=0}^{\binom{n}{2}}$ denotes the random graph process on $[n]$, and the result on $G(n,p)$ will follow (See Corollary \ref{corol2}).

A $3$-spider of $G$ is a tree-subgraph of $G$ on $7$ vertices that is spanned by the edges incident to $3$ vertices of degree $2$ in $G$ with a common neighbour (see Figure \ref{fig:3-spider}).

\begin{figure}[h]
\centering
\begin{tikzpicture}[vertex/.style={draw,circle,color=black,fill=black,inner sep=1,minimum width=4pt},scale=1]
    
    \draw[thick] (0,0) circle (2);
    \node at (0,-0.6) {$V(G) \setminus \{u,v,w\}$};
	
	\node[vertex] (u) at (-1.35,1.9) {};
    \node[vertex] (v) at (-1.7,1.65) {};
    \node[vertex] (w) at (-2,1.35) {};
    
    \node[vertex] (x) at (-0.75,1.55) {};
\node[vertex] (common) at (-1.05,1.2) {};
    \node[vertex] (y) at (-1.35,0.85) {};
    \node[vertex] (z) at (-1.65,0.5) {};    
    
    \draw[thick] (u) to (x);
    \draw[thick] (v) to (y);
    \draw[thick] (w) to (z);
    
    \draw[thick] (u) to (common);
    \draw[thick] (v) to (common);
    \draw[thick] (w) to (common);    
    
	\node at (-1.35,2.2) {$u$};
	\node at (-1.8,1.95) {$v$};
	\node at (-2.2,1.65) {$w$};

    \node at (-0.55,1.35) {$a$};
    \node at (-0.85,1) {$b$};
    \node at (-1.15,0.65) {$c$};
    \node at (-1.45,0.3) {$d$};    
 
\end{tikzpicture}
\caption{\textit{A $3$-spider in $G$ with vertex set $\{a,b,c,d,u,v,w\}$. The vertices $u,v,w$ have degree $2$ and a common neighbour, and the edges incident to them span a tree}.}\label{fig:3-spider}
\end{figure}

It turns out that for enough large $t$, the difference $\mu (G_t) - \left( n_0(G_t) + \lceil \frac{1}{2} n_1 (G_t)\rceil \right)$ is strongly connected with the number of $3$-spiders in $G_t$. This number corresponds to the leading term of $f(d)-\frac{1}{2}de^{-d}-e^{-d}$ in the expansion of $f(d)$, which is $\frac{1}{12}\cdot d^6 \cdot e^{-3d}$. We also describe typical behaviour of $\mu (G_t)$ at the very sparse range $t=o(n)$. In this range, the difference $\mu (G_t) - \left( n_0(G_t) + \lceil \frac{1}{2} n_1 (G_t)\rceil \right)$ is determined by the multiplicity of trees with more than $2$ leaves. The smallest such tree, and hence the one that appears first in the random process, is the $3$-star $K_{1,3}$. This motivates the following definition.

\begin{defin}\label{def:times}
Let $\{ G_t \} _{t=0}^{\binom{n}{2}}$ be a random graph process. For $i\in \mathbb{N}^+$ we define the random variable $t_i$ as the minimum $t$ greater than $10n$ such that $G_t$ spans less than $i$ distinct $3$-spiders. We further define $t_i^*$ to be the minimum $t$ such that $G_t$ spans at least $i$ copies of $K_{1,3}$.
\end{defin}

Note that both the property of containing less than $i$ 3-spiders and the property of containing at least $i$ 3-stars are not monotone properties, so $t_i$ and $t_i^*$ should not be considered as hitting times.

Following Definition \ref{def:times} we are now ready to state our second main theorem.

\begin{thm}\label{main2}
Let $\{ G_t \} _{t=0}^{\binom{n}{2}}$ be a random graph process on $[n]$, and let $g(n)$ be a function that tends to infinity with $n$ arbitrarily slowly. Then, with high probability, for every $i\in \mathbb{N}^+$,
\begin{align*}
g(n)^{-1}\cdot n^{2/3}\le  t_i^* \le  g(n) \cdot n^{2/3} \hspace{5mm} \text{ and } \hspace{5mm} \left| t_i-n\cdot \left(\frac{1}{6}\log n + \log \log n \right) \right| \leq g(n) \cdot n,
\end{align*}
and
\begin{enumerate}[label=(\roman*)]

\item $\mu (G_t) = n_0(G_t) + \lceil \frac{1}{2} n_1 (G_t) \rceil$ for  $t \le g(n)^{-1}\cdot n^{2/3}$; \label{main2:part1}

\item $\mu (G_t) = n_0(G_t) + \lceil \frac{1}{2} (n_1 (G_t)+i) \rceil$ for every $t_i^* \le t < t_{i+1}^*$; \label{main2:part2}

\item $\lim_{n\to \infty}\left( \mu (G_t) - n_0(G_t) - \lceil \frac{1}{2} n_1 (G_t) \rceil \right) =\infty$
for $g(n) \cdot n^{2/3} \leq t \leq  n\cdot \left(\frac{1}{6}\log n + \log \log n -g(n)\right)$; \label{main2:part3}

\item $\mu (G_t) =   n_0(G_t) + \lceil \frac{1}{2} (n_1 (G_t)+(i-1)) \rceil  $ for every $t_{i} \le t < t_{i-1}$; \label{main2:part4}

\item $\mu (G_t) =  n_0(G_t) + \lceil \frac{1}{2} n_1 (G_t)\rceil $ for every $t\geq t_1$. \label{main2:part5}

\end{enumerate}
\end{thm}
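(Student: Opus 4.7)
The proof decomposes by density regime. The threshold locations follow from moment computations: writing $p = t/\binom{n}{2}$, the expected number of $K_{1,3}$'s in $G_t$ is $\Theta(t^3/n^2)$, which gives $t_i^* = \Theta(n^{2/3})$; writing $d = np$, the expected number of $3$-spiders is $\tfrac{1}{6}n\,d^6 e^{-3d}(1+o(1))$, which is $\Theta(1)$ at $d = \tfrac{1}{3}\log n + 2\log\log n + O(1)$, i.e.\ $t = n(\tfrac{1}{6}\log n + \log\log n) + O(n)$. Second moment or Chen--Stein bounds then deliver the concentration windows for $t_i^*$ and $t_i$ claimed in the statement.

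Parts \ref{main2:part1}--\ref{main2:part3} are handled by a direct structural analysis of the sparse graph. For $t \le g(n)^{-1} n^{2/3}$ a first moment argument shows whp $G_t$ contains neither a $K_{1,3}$ nor a cycle, so $G_t$ is a disjoint union of paths and isolated vertices and $\mu(G_t) = n_0 + n_1/2$ by trivial concatenation. For $t \in [t_i^*, t_{i+1}^*)$ refined first-moment bounds exclude every tree with $\ge 3$ leaves other than exactly $i$ vertex-disjoint $K_{1,3}$'s, and exclude all cycles. In each such $K_{1,3}$ the centre can accommodate at most two of its three incident edges in a Hamilton cycle, so one leaf is ``orphaned'' and must receive two new edges rather than one, contributing $+1$ half-edge per $K_{1,3}$ and matching the correction $+i$ in the claim; the upper bound follows by greedy path-concatenation absorbing each $K_{1,3}$ at its orphaned leaf. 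Part \ref{main2:part3} then follows by showing that throughout the intermediate range the count of whichever obstruction dominates (either $K_{1,3}$'s for $t$ near $n^{2/3}$ or $3$-spiders for $t$ near $\tfrac{1}{6} n \log n$) is $\omega(1)$ and concentrated, and each obstruction contributes $\ge \tfrac{1}{2}$ to $\mu - n_0 - \lceil n_1/2 \rceil$ by the same orphan argument.

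The substantive work is in parts \ref{main2:part4} and \ref{main2:part5}, at $t = \Theta(n \log n)$. The lower bound reduces once more to orphan counting: in each $3$-spider the centre has cycle-degree $2$, so at least one of its three degree-$2$ neighbours has its $G$-edge to the centre excluded from the cycle and therefore requires one new edge. The upper bound demands a detailed structural description of $G_t$ at this density. One proves whp that (a) the only multi-vertex configuration of degree-$\le 2$ vertices is a complete $3$-spider (no longer tree-obstruction, no $4$-spider, no two degree-$1$ vertices sharing a neighbour, etc.); and (b) the ``core'' of $G_t$ obtained by removing small neighbourhoods of the low-degree and $3$-spider vertices is strongly expanding and robustly Hamilton-connected. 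A Pósa-style rotation--extension argument on the core then produces a Hamilton cycle which can be locally modified to absorb every low-degree vertex and every $3$-spider using exactly the minimum number of new edges.

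The principal obstacles are two-fold. First, statement (b) requires a robust Hamilton-connectivity theorem for the core that is tight enough to support the exact edge-count accounting; this goes beyond classical existence results and requires a careful extension of the rotation--extension machinery, tracking which local modifications remain feasible after the absorption steps. Second, the $3$-spider count is not monotone in $t$ (an added edge can simultaneously destroy and create $3$-spiders), so one must also show that in the $O(n)$-window around each $t_i$ whp no new $3$-spider is created by any added edge, so that the count is effectively monotone non-increasing and equals $i-1$ throughout $[t_i, t_{i-1})$; this will follow from a union bound over the linearly-many edges added and the very small probability of completing a $3$-spider at this density.
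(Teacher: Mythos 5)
Your outline captures the correct high-level strategy (threshold localisation via moment computations, structural classification of low-degree obstructions, lower bound by ``orphan'' counting, upper bound via an absorbing Hamilton cycle on a robust core, and a monotonicity argument for the $3$-spider count), and it is close in spirit to what the paper does. However, there is a genuine gap, which you yourself flag: your statement~(b), a tight robust Hamilton-connectivity theorem for the core, is the entire technical heart of the theorem and your proposal neither proves it nor reduces it to a known result. Moreover, your notion of ``core obtained by removing small neighbourhoods of low-degree and $3$-spider vertices'' is an \emph{ad hoc} object for which there is no reason to expect the needed rotation--extension argument to go through as tightly as the exact edge accounting demands; in particular, after you ``absorb'' a vertex you must guarantee that subsequent absorptions remain possible, and tracking this through a bespoke P\'osa argument is nontrivial.

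The paper avoids building this machinery from scratch by working with the \emph{strong $4$-core} $C(G)$ and its border $B(G)$, and invoking Lemma~\ref{lemma:4coreHmty} (proved in \cite{ANAS}): for any $U\subseteq B(G)$ and any matching $M$ on $U$, the graph $G[C(G)\cup U]\cup M$ has a Hamilton cycle containing all of $M$, with probability $1-O(n^{-2})$. This one lemma supplies exactly the ``robustly Hamilton-connected core that absorbs any prescribed local modifications'' you need but do not have. The second ingredient the paper uses, which your proposal lacks as an explicit reduction, is the quantity $\mu'(G)$: the minimum over disjoint path covers of $G^{AB}$ (the subgraph outside the strong $4$-core) of $\lceil\tfrac12 a(Q)\rceil$, where $a(Q)$ counts endpoints in $A(G)$. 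Observation~\ref{obs:mu} gives $\mu\geq\mu'$ deterministically, and Lemma~\ref{lemma:mu} (and its process version, Lemma~\ref{lemma:mu:process}) uses Lemma~\ref{lemma:4coreHmty} to show $\mu=\mu'$ whp. This reduces the entire problem to the combinatorics of small tree components of $G^{AB}$; Lemma~\ref{lemma:proof3comb} then computes $a(T)=2n_0(T)+n_1(T)+s_3'(T)$ for each such $T$, giving the exact formula $\mu(G_t)=n_0+\lceil\tfrac12(n_1+s_3)\rceil$ once Lemma~\ref{lemma:proofcomps} excludes pathological components. Your ``orphan'' heuristic recovers the right count for a single $3$-spider, but without the $\mu=\mu'$ reduction and the structural Lemma~\ref{lemma:proofcomps} you cannot promote it to the global identity. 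Your treatment of the non-monotonicity of $s_3$ and the localisation of $t_i^*,t_i$ matches the paper's Lemma~\ref{lemma:bounding-ti} and requires no change.
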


Here it is worth stressing that it is not enough to show that, for example, \ref{main2:part5} holds for $t=t_1$, since both the property of not spanning $i$ copies of a $3$-spider and the property of satisfying the equality $\mu (G_t) =n_0(G_t)+\lceil \frac{1}{2} n_1 (G_t)\rceil$ are not monotone properties.

The following corollary is a hitting time statement that can be derived from Theorem \ref{main2} \ref{main2:part5}.

\begin{corol*}
Let $\{ G_t \} _{t=0}^{\binom{n}{2}}$ be a random graph process on $[n]$ and let $k =k(n)\ll \frac{n^{2/3}}{\log n}$ be an integer. Then, with high probability, the hitting time of the monotone property $\mu (G_t) \le k$ is equal to the hitting time of the property $n_0(G_t)+\lceil \frac{1}{2} n_1 (G_t)\rceil \le k$.
\end{corol*}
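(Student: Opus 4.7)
Set $V(G) := n_0(G) + \lceil n_1(G)/2 \rceil$ and let $\tau_V, \tau_\mu$ denote the hitting times of $\{V(G_t) \le k\}$ and $\{\mu(G_t) \le k\}$ in the random graph process. A short case analysis on the addition of a single edge (splitting on the degrees of its endpoints) shows that $V$ is non-increasing along the process, so $V \le k$ is genuinely a monotone property. The deterministic bound $\mu(G) \ge V(G)$ yields $\tau_V \le \tau_\mu$, and the content of the corollary is the matching inequality $\tau_\mu \le \tau_V$ w.h.p.

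The plan is to show $\tau_V > t_1$ w.h.p.\ and then invoke part \ref{main2:part5} of Theorem \ref{main2}: at $t = \tau_V \ge t_1$ that item gives $\mu(G_{\tau_V}) = V(G_{\tau_V}) \le k$, so $\tau_\mu \le \tau_V$. To bound $\tau_V$ from below, use Theorem \ref{main2} to pick $g(n) \to \infty$ (to be calibrated shortly) and assume, w.h.p., that $t_1 \le t^+ := n\bigl(\tfrac{1}{6}\log n + \log\log n + g(n)\bigr)$. At time $t^+$, writing $p^+$ for the corresponding edge density, the standard Poisson-type approximations for $n_0, n_1$ in $G(n,p^+)$ (transferred to the process via a routine coupling) give
\[
\mathbb{E}[V(G_{t^+})] = \Theta\bigl( n e^{-np^+} \cdot np^+ \bigr) = \Theta\!\left( \frac{n^{2/3}}{\log n \cdot e^{2g(n)}} \right).
\]

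Because $k \ll n^{2/3}/\log n$, one can choose $g(n) \to \infty$ slowly enough that $e^{2g(n)} = o(n^{2/3}/(k\log n))$ (for instance $g(n) = \tfrac{1}{4}\log(n^{2/3}/(k\log n))$, which tends to infinity by the hypothesis on $k$). With such $g$, $\mathbb{E}[V(G_{t^+})] \to \infty$ and $\mathbb{E}[V(G_{t^+})]/k \to \infty$; Chebyshev's inequality, using that $\mathrm{Var}(V) = O(\mathbb{E}[V])$, then forces $V(G_{t^+}) > k$ w.h.p. Combined with the monotonicity of $V$ and the event $t_1 \le t^+$, this upgrades to $V(G_{t_1}) > k$ w.h.p., i.e.\ $\tau_V > t_1$, and the argument closes. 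The main obstacle is the tandem calibration of $g$ against $k$; the hypothesis $k \ll n^{2/3}/\log n$ is exactly what allows such a choice. Edge cases such as $k = 0$ (where the statement reduces to the classical equivalence of the Hamiltonicity hitting time and the hitting time of minimum degree $\ge 2$) are absorbed into the same framework.
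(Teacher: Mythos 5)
Your proposal is correct and follows essentially the same route as the paper: reduce to showing that the hitting time $\tau_V$ of $V(G_t) := n_0(G_t)+\lceil n_1(G_t)/2\rceil \le k$ exceeds $t_1$ with high probability, then invoke Theorem \ref{main2}\ref{main2:part5} at $t=\tau_V$ to get $\tau_\mu \le \tau_V$, the reverse inequality being immediate from $\mu \ge V$. The Chebyshev argument calibrating $g(n)$ against $k$ supplies the quantitative content of the step that the paper asserts without detail (``the hitting time of $n_0+\lceil n_1/2\rceil \le k$ occurs after $t_1$ w.h.p.''), and your monotonicity check for $V$ is a correct and worthwhile verification.
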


\begin{proof}
An application of Chebychev's inequality gives that if $t'=(\frac16 \log n+\log\log n+h(n)) \cdot n$, where $h(n)=o(\log\log n)$, then $ n_0(G_{t'}) +\lceil \frac{1}{2} n_1 (G_{t'})\rceil$ is concentrated around its expectation, i.e., with high probability
$$
n_0(G_{t'}) +\lceil \frac{1}{2} n_1 (G_{t'})\rceil =(1+o(1))\cdot \mathbb{E}\left( n_0(G_{t'}) +\lceil \frac{1}{2} n_1 (G_{t'})\rceil \right)=(1+o(1))\cdot \frac{n^{2/3}e^{-2h(n)}}{3\log n}.
$$
By Theorem \ref{main2} we have that $t'>t_1$ with high probability, and since the sequence  $\{ n_0(G_t)+\lceil \frac{1}{2} n_1 (G_t)\rceil \}_{t=0}^{\binom{n}{2}}$ is non-increasing with respect to $t$, the following holds. For $k \ll \frac{n^{2/3}}{\log n}$ the hitting time of the property $n_0(G_t)+\lceil \frac{1}{2} n_1 (G_t)\rceil \le k$, denoted by $\sigma_k$, occurs after time $t_1$ with high probability. In addition, since $\mu (G_t) \ge n_0(G_t)+\lceil \frac{1}{2} n_1 (G_t)\rceil$, we have that  $\mu (G_t)>k$ for $t<
\sigma_k$. Combining the above with Theorem \ref{main2} \ref{main2:part5} we get the following corollary.
\end{proof}

In fact, Theorem \ref{main2} \ref{main2:part5} implies that if $0\le k_0 \ll \frac{n^{2/3}}{\log n}$ then, with high probability, the hitting times of $\mu (G_t) \le k$ and $n_0(G_t)+\lceil \frac{1}{2} n_1 (G_t)\rceil \le k$ are equal for all $0 \le k \le k_0$. For $k=1$ we get that the hitting time of containing a Hamilton path is equal with high probability to that of $n_0(G_t)+\lceil \frac{1}{2} n_1 (G_t)\rceil \le 1$, which is equal with high probability to the hitting time of $n_1(G_t) \le 2$.

Corollary \ref{corol2}, stated below, complements Theorem \ref{thm:main1} in the range $np\geq 0.4\log n$. It follows from Theorem \ref{main2} \ref{main2:part5} by the following two facts that hold for $G\sim G(n,p)$. First, if $p=p(n)$ is such that $np=\omega(1)$ and $np=o(n)$, then the number of edges of $G$ lies in $[\binom{n}{2}p-n, \binom{n}{2}p+n]$ with high probability. Second, conditioned on $G$ having exactly $t$ edges we have that $G$ has the same distribution as $G_t$.

\begin{corol*} \label{corol2}
Let $G\sim G(n,p)$, where $np-\frac13 \log n-2\log\log n \to \infty$. Then $\mu (G) = n_0(G) + \lceil \frac{1}{2}n_1(G) \rceil$ with high probability.
\end{corol*}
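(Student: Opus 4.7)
The plan is to deduce the corollary from part \ref{main2:part5} of Theorem \ref{main2} via the standard coupling between $G(n,p)$ and the random graph process $\{G_t\}_{t=0}^{\binom{n}{2}}$. Realize $G\sim G(n,p)$ as $G_M$, where $M\sim\Bin(\binom{n}{2},p)$ is drawn independently of the process; then $G_M$ has the distribution of $G(n,p)$. The goal is to show that with high probability $M\ge t_1$, at which point Theorem \ref{main2}\ref{main2:part5} yields $\mu(G_M)=n_0(G_M)+\lceil \tfrac{1}{2} n_1(G_M)\rceil$.

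Write $np=\tfrac{1}{3}\log n+2\log\log n+\omega(n)$ with $\omega(n)\to\infty$, and choose $g(n)\to\infty$ slowly enough that $g(n)\le \omega(n)/10$ for every large $n$; apply Theorem \ref{main2} to the process with this choice of $g$. A direct computation gives
\[
p\binom{n}{2}=\tfrac{n-1}{2}\bigl(\tfrac{1}{3}\log n+2\log\log n+\omega(n)\bigr)\;\ge\;n\Bigl(\tfrac{1}{6}\log n+\log\log n+2g(n)\Bigr)
\]
for large $n$, while a Chebyshev (or Chernoff) bound on $M\sim\Bin(\binom{n}{2},p)$, whose standard deviation is at most $n/2$, gives $M\ge p\binom{n}{2}-n\log n$ with high probability. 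Hence $M\ge n\bigl(\tfrac{1}{6}\log n+\log\log n+g(n)\bigr)$ with high probability.

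By Theorem \ref{main2}, with high probability $t_1\le n\bigl(\tfrac{1}{6}\log n+\log\log n+g(n)\bigr)$ and simultaneously $\mu(G_t)=n_0(G_t)+\lceil\tfrac{1}{2} n_1(G_t)\rceil$ for every $t\ge t_1$. Intersecting this with the event $M\ge n\bigl(\tfrac{1}{6}\log n+\log\log n+g(n)\bigr)$ yields $M\ge t_1$, and therefore $\mu(G_M)=n_0(G_M)+\lceil\tfrac{1}{2} n_1(G_M)\rceil$ with high probability, which is exactly the conclusion of the corollary.

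The only subtlety worth noting — and the reason this is not literally a one-line transfer — is that the event $\{\mu(G)=n_0(G)+\lceil \tfrac{1}{2} n_1(G)\rceil\}$ is not monotone in the edges, so a black-box $G(n,p)$--$G(n,m)$ equivalence is not directly available. This is circumvented by the fact that part \ref{main2:part5} already supplies the \emph{uniform} conclusion ``for all $t\ge t_1$,'' so that only a one-sided concentration bound on $M$ is needed, and the concentration of the binomial easily suffices.
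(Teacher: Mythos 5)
Your approach is essentially the paper's: the paper disposes of this corollary in one line by invoking the asymptotic equivalence of $G(n,p)$ and $G(n,m)$ together with Theorem \ref{main2}\ref{main2:part5}, and your coupling $G(n,p)=G_M$ with $M\sim\Bin\bigl(\binom{n}{2},p\bigr)$, plus the observation that the uniform-in-$t$ form of \ref{main2:part5} sidesteps the non-monotonicity issue, is just that argument written out carefully (and correctly).

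One arithmetic slip: from $p\binom{n}{2}\ge n\bigl(\tfrac{1}{6}\log n+\log\log n+2g(n)\bigr)$ and $M\ge p\binom{n}{2}-n\log n$ you cannot conclude $M\ge n\bigl(\tfrac{1}{6}\log n+\log\log n+g(n)\bigr)$, since $n\log n$ may vastly exceed $n\,g(n)$. The fix is immediate: Chebyshev with your bound $\mathrm{sd}(M)\le n/2$ gives $\pr\bigl(|M-p\binom{n}{2}|\ge n\,g(n)\bigr)\le \tfrac{1}{4}g(n)^{-2}=o(1)$, so take the deviation window to be $n\,g(n)$ rather than $n\log n$ and the rest of the argument goes through unchanged.
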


Our third main result in this paper deals with the completion number of pancyclicity, where a graph $G$ is said to be pancyclic if it contains a cycle of every length between 3 and $|V(G)|$.
Let $\hat{\mu}(G)$ denote the completion number of $G$ with respect to pancyclicity. Some connections between $\hat{\mu}(G)$ and $\mu (G)$ can already be derived from known results. Cooper and Frieze \cite{pancyclic} showed that the sharp threshold of Hamiltonicity in $G(n,p)$ is also a sharp threshold of pancyclicity (and, in fact, that the hitting times of the two properties in a random graph process are with high probability equal). As a consequence we have that $\hat{\mu}(G(n,p))=\mu (G(n,p))=0$ with high probability if $np-\log n -\log \log n \to \infty$.

In sparser random graphs, we can get an approximation of $\hat{\mu}(G(n,p))$ by using a deterministic bound on the difference  $\hat{\mu}(G)-\mu (G)$. It is known (see e.g. \cite{PANBOOK}, Section 4.5) that, for every $n$, there exists a pancyclic $n$-vertex graph with $n+(1+o(1)) \cdot \log_2 n$ edges. In other words, one can always add $(1+o(1))\cdot \log_2 n$ edges to a Hamiltonian $n$-vertex graph to make it pancyclic. In particular we get $\hat{\mu}(G) = \mu (G) + O(\log n)$ for every $n$-vertex graph $G$. As an immediate consequence of this, Theorem \ref{thm:main1} and Corollary \ref{corol2}, we get that, for $G\sim G(n,p)$ with $np \ge 20$ not too large, we have $\hat{\mu} (G)=(1+o(1)) \cdot \mu (G)$ with high probability. This approximation holds as long as $\mu (G)$ is typically much larger than $\log n$, which remains true until fairly close to the Hamiltonicity threshold.

Our final theorem in the paper shows that, typically, the two completion numbers are in fact equal. In other words, in terms of the number of edges needed to be added, pancyclicity is typically as cheap as Hamiltonicity in $G(n,p)$.

\begin{thm}\label{main3}
Let $np \ge 20$ and $G\sim G(n,p)$. Then, with high probability, $\hat{\mu} (G) =\mu (G)$.
\end{thm}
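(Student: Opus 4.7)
The inequality $\hat{\mu}(G)\ge \mu(G)$ is immediate, as every pancyclic graph is Hamiltonian. The task is therefore to exhibit, with high probability, a set $F$ of exactly $\mu(G)$ edges such that $G\cup F$ is pancyclic. When $np-\log n-\log \log n\to \infty$ the Cooper--Frieze hitting-time result \cite{pancyclic} gives $\hat{\mu}(G)=\mu(G)=0$ whp, leaving nothing to prove. We therefore restrict to the regime $20\le np\le \log n+\log \log n+O(1)$, where by Theorem \ref{thm:main1} we have $\mu(G)\le 12 e^{-20}n$ whp.

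The plan is to take an optimal Hamilton completion $F$, as produced by the arguments behind Theorems \ref{thm:main1} and \ref{main2}, and to verify that the Hamilton graph $H=G\cup F$, with Hamilton cycle $C=v_1v_2\cdots v_nv_1$, contains a cycle of every length $\ell\in\{3,\dots,n\}$. The length-$n$ case is $C$ itself. For short and medium lengths, say $3\le \ell \le (1-e^{-\Omega(np)})n$, a direct second-moment computation will yield an $\ell$-cycle in $G$ whp, using only the hypothesis $np\ge 20$. The interesting range is therefore $\ell$ very close to $n$, which we address through chords of $G$ with respect to $C$: a chord $v_iv_j$ at cycle-distance $k=|i-j|$ yields by shortcut a cycle of length $n-k+1$, so to cover the remaining lengths it suffices to exhibit chords of $G$ at every small cycle-distance $k$.

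The main obstacle is that $C$ is not a fixed cycle but depends on $G$ through the choice of completion $F$, so a direct Chernoff bound on the number of chords at a given cycle-distance is unavailable. To circumvent this, we exploit $|F|\ll n$: the Hamilton cycle $C$ decomposes into $|F|$ sub-paths lying in $G$ joined by the $F$-edges, so apart from $O(|F|)$ positions, the cycle-distance structure of $C$ coincides with that of a collection of paths in $G$. A union bound over the possible gluing patterns of these sub-paths --- tractable because $|F|\ll n$ and because any minimum completion is structurally anchored to the low-degree vertices of $G$ --- becomes feasible thanks to the concentration $(1-p)^n\le e^{-20}$ available at each cycle-distance.

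When a small cycle-distance is still uncovered by chords of $G$, we plan to invoke the flexibility of the optimal completion: by a local swap, replacing an $F$-edge incident to a low-degree vertex by an alternative edge of $G$, we produce a new optimal completion whose Hamilton cycle has a chord at the previously missing distance. The constructions underlying Theorems \ref{thm:main1} and \ref{main2} deliver optimal completions with ample such flexibility near each $F$-edge. Combining direct cycle existence in $G$ for the bulk of the range, chord-based shortcuts of $C$ for lengths near $n$, and local swaps of $F$ to fill any remaining gaps, we obtain cycles of every length in $G\cup F$, proving $\hat{\mu}(G)\le \mu(G)$.
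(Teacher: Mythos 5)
Your proposal has a correct skeleton (the inequality $\hat\mu\ge\mu$, the dense regime via Cooper--Frieze, the decomposition into short/medium/long cycle lengths), and both you and the paper ultimately cover the medium range of cycle lengths by cycles that already exist in $G$. But there are two genuine gaps, one of which is fatal.

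First, your claim that for $3\le\ell\le(1-e^{-\Omega(np)})n$ a ``direct second-moment computation will yield an $\ell$-cycle in $G$ whp'' is false for constant $\ell$ when $np=\Theta(1)$. The number of triangles in $G(n,c/n)$ is asymptotically Poisson with constant mean $\approx c^3/6$; the probability of having no triangle converges to the constant $e^{-c^3/6}>0$, which is not $o(1)$. The same applies to any fixed cycle length. Your proof therefore fails to produce all short cycles in $G$ alone, and you have no mechanism to create them via $F$. The paper handles exactly this obstruction by making the completion do the work: the event $\cE(G)$ in Lemma~\ref{lemma:4coreProperties}\ref{4core:eventE} guarantees whp that for each $k\in[3,\log\log n]$, either $G$ has a $k$-cycle or $G^{AB}$ has a specific tree component that can be closed into a $k$-cycle by adding a single edge, and the construction of $F_0$ in Lemma~\ref{lemma:mu} places that edge in $F$. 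You would need an analogous device.

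Second, your resolution of the dependence problem for long cycles does not hold up. You propose a union bound over ``gluing patterns'' of the $|F|$ sub-paths of $C$ lying in $G$, claiming this is ``tractable because $|F|\ll n$.'' But $|F|=\mu(G)=\Theta(e^{-np}n)$ is linear in $n$ for $np=\Theta(1)$, so the number of orderings/gluings of the sub-paths is on the order of $|F|!$, which is far too large for any per-pattern bound to survive a union bound. The ``local swap'' fallback is also left entirely unsubstantiated: it requires showing that near every $F$-edge there is a suitable alternative $G$-edge that preserves both Hamiltonicity and optimality while producing a chord at the desired distance, and nothing in your sketch establishes this. The paper avoids all of this by constructing $F$ so that the resulting Hamilton cycle comes with built-in flexibility: by choosing which matchings $M_\ell$ on $B(G)$ to feed into the strong-$4$-core Hamiltonicity lemma (Lemma~\ref{lemma:4coreHmty}), and which isolated $A$-vertices from the $\ge 0.05 d^3 e^{-d}n$ components in $S(G)$ to leave out, it directly produces cycles of every length $n-\ell$ for $0\le\ell\le s$, covering the entire long range without ever reasoning about chords of a variable cycle. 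So the paper's approach routes the long-cycle argument through the structure of the strong $4$-core rather than through chord counting, and this is what makes the dependence issue evaporate.
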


This, together with Theorem \ref{thm:main1} and Corollary \ref{corol2}, provides a characterization of $\hat{\mu} (G(n,p))$ for all values $p \ge 20/n$.

Our final result in this paper revolvs around the algorithmic aspect of completing a random graph $G$ to Hamiltonicity and pancyclicity
.

Note that $\mu (G) = 0$ if and only if $G$ is Hamiltonian. Thus, the problem of determining $\mu (G)$ is NP-complete, as it solves the Hamiltonicity problem. The same goes for the problem of determining $\hat{\mu}(G)$, since the problem of deciding pancyclicity is also known to be NP-complete.

In the realm of random graphs, algorithms for finding Hamilton cycles in polynomial time with high probability are known to exist (see \cite{CRE, ANASALG, BFF}). The following theorem shows that, in fact, determining with high probability $\mu (G)$ and $\hat{\mu}(G)$, and finding a minimum size completing set, can also be done by a polynomial time algorithm.

\begin{thm}\label{main4}
There is a polynomial time algorithm that, given a graph $G$, returns a set $F$ of non-edges of $G$. If $np \ge 20$ and $G\sim G(n,p)$ then, with high probability, $|F| = \mu (G)$ and $G \cup F$ is pancyclic.
\end{thm}

\paragraph{Paper structure} In Section \ref{sec-per} we provide notation and auxiliary results to be used in the rest of the paper. In Section \ref{sec:4core} and Section \ref{sec-identify} we construct tools for proving our theorems, and prove some key lemmas. Theorem \ref{thm:main1} is then proved in Section \ref{sec:proof1}, Theorem \ref{main3} and Theorem \ref{main4} in Section \ref{sec:proof3} and Theorem \ref{main2} in Section \ref{sec:proof2}.

\section{Preliminaries} \label{sec-per}

\subsection*{Notation}

We will use the following graph theoretic notation throughout the paper.

Given a graph $G$, the degree of a vertex $v\in V(G)$ in $G$, denoted by $d_G(v)$, is the number of edges of $G$ incident to $v$. We denone by $\Delta(G)$ the maximum degree of $G$. If $S\subseteq V(G)$ we denote by $d_G(v,S)$ the degree of $v$ into $S$, that is, the number of edges in $E(G)$ with one end in $S$, and the other end equal to $v$.
The (external) neighbourhood of a vertex subset $U$, denoted by $N_G(U)$, is the set of vertices in $V\setminus U$ adjacent to a vertex of $U$. For a vertex $v\in V(G)$ we further define $N_G^k(v)$, $N_G^{\le k}(v)$ and $N_G^{<k}(v)$ as the sets of vertices in $V(G)$ whose distance from $v$ is exactly $k$, at most $k$ and less than $k$, respectively. For $S\subseteq V(G)$ and an edge set $E$ spanned by $V(G)$ we let $G[S]$ be the subgraph of $G$ induced by $S$ and $G\cup E$ the graph $(V(G),E(G)\cup E)$. While using the above notation we occasionally omit $G$ if the identity of the graph $G$ is clear from the context.

For $i\in \left[ 0 ,|V(G)|-1 \right]$ we denote $n_i(G) \coloneqq \left| \{ v\in G \mid d_G(v) = i\} \right|$, the number of vertices in $G$ with degree $i$.

We denote by $\cL (G)$ the set $\{\ell\in [3,|V(G)|] \mid G\text{ contains a cycle of length }\ell\}$, and by $L_{\max}(G)\coloneqq \max (\cL (G))$ the length of a longest cycle in $G$.

We occasionally suppress rounding signs to simplify the presentation.

\subsection*{Auxiliary results}

Below is a list of auxiliary results which we will employ in our proof.

\begin{lemma}[Asymptotic equivalence of $G(n,p)$ and $G_t$, see e.g. \cite{FK}, Lemma 1.2] \label{lemma:equiv}
Let $\{ G_t \}_{t=0}^{\binom{n}{2}}$ be a random graph process on $[n]$. 
Let $\mathcal{P}$ be a graph property, $t=t(n)$ is such that $t\to \infty$ and $\binom{n}{2}-t \to \infty$ and $p=t/\binom{n}{2}$.
Then
$$
\lim _{n\to \infty}\pr \left( G_t\in \mathcal{P} \right) \le 10t^{1/2}\cdot \lim _{n\to \infty}\pr \left( G(n,p)\in \mathcal{P} \right) .
$$
\end{lemma}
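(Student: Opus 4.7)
The plan is to use the standard coupling between $G(n,p)$ and $G(n,m)$: conditional on having exactly $m$ edges, $G(n,p)$ is distributed as $G(n,m)$. Writing $N=\binom{n}{2}$ and letting $M$ denote the number of edges in $G(n,p)$, this coupling gives
\[
\pr(G(n,p)\in\cP) \;\geq\; \pr(M=m)\cdot \pr(G(n,m)\in\cP),
\]
so the lemma reduces to establishing the lower bound $\pr(M=m)\geq \frac{1}{10\sqrt{m}}$ for all sufficiently large $n$; the claimed inequality then follows immediately upon dividing through and taking limits.

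To establish this point-probability bound I would use that $M\sim\Bin(N,p)$ with $\mathbb{E}[M]=Np=m$, so $\pr(M=m)$ is simply the value of a binomial PMF at its mean. A direct application of Stirling's formula to the factorials in
\[
\pr(M=m)=\binom{N}{m}p^{m}(1-p)^{N-m}
\]
yields the familiar local estimate
\[
\pr(M=m) \;=\; (1+o(1))\cdot \frac{1}{\sqrt{2\pi\,m(1-p)}}.
\]
Since $1-p\leq 1$ and $\sqrt{2\pi}<10$, the right-hand side is at least $\frac{1}{10\sqrt{m}}$ for large $n$, irrespective of the precise value of $p\in(0,1)$.

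The hypotheses $m\to\infty$ and $N-m\to\infty$ are precisely what justify the Stirling asymptotics for $m!$ and $(N-m)!$ respectively, so the $(1+o(1))$ error in the local estimate is legitimate. The only mildly delicate regime is when $p$ is close to $1$, where the factor $1-p$ could be small; however, either $p\leq 1/2$ and $m(1-p)\geq m/2\to\infty$, or $p>1/2$ and $m(1-p)=m(N-m)/N \geq (N-m)/2\to\infty$, so the approximation remains valid throughout the allowed range. I do not expect any step of this argument to be a genuine obstacle: the entire lemma is a one-line consequence of the local central limit estimate for a $\Bin(N,p)$ evaluated at its mean, combined with the standard conditioning identity.
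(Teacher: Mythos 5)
Your proof is correct and is precisely the standard argument for this lemma (which the paper does not reprove but cites from Frieze--Karo\'nski): condition $G(n,p)$ on having exactly $m$ edges to get $\pr(G(n,p)\in\mathcal{P})\ge \pr(M=m)\pr(G(n,m)\in\mathcal{P})$, then lower-bound the binomial point probability at its mean via Stirling, using $m\to\infty$ and $\binom{n}{2}-m\to\infty$ to justify the local estimate. Your case analysis ensuring $m(1-p)\to\infty$ in both regimes of $p$ is the right way to handle the only delicate point.
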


\begin{thm}[Cycle lengths in sparse random graphs, see Theorem 1.2 and Lemma 1.3 of \cite{ANAS} and Equation (2) of \cite{AF}]\label{thm:ANAS}
Let $p=\frac{c}{n}$, $c=c(n)\geq 20$ and $G\sim G(n,p)$. Then with high probability
$$
\Lmax (G) = \left( 1-(c+1)e^{-c}-c^2e^{-2c}+O(c^6e^{-3c}) \right) \cdot n.
$$
In particular with high probability $\Lmax(G)\geq n-0.04c^3e^{-c}n$. Additionally, if $\ell _0 = \ell _0(n) \to \infty$ then with high probability $\left[ \ell _0,\Lmax \right] \subseteq \mathcal{L}(G).$
\end{thm}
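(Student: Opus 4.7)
Since $\hat{\mu}(G)\geq\mu(G)$ holds trivially (every pancyclic graph is Hamiltonian), it suffices to exhibit, with high probability, a set $E^*$ with $|E^*|=\mu(G)$ such that $G':=G\cup E^*$ is pancyclic. Write $c=np\geq 20$ and $\Delta:=n-\Lmax(G)$; by Theorem \ref{thm:ANAS}, w.h.p.\ $[\ell_0(n),\Lmax(G)]\subseteq\cL(G)$ for any $\ell_0(n)\to\infty$, and $\Delta\leq 0.04\,c^3e^{-c}n$. For short cycle lengths $\ell\in[3,\ell_0(n)]$, a standard first/second moment computation shows that the number of $\ell$-cycles in $G$ concentrates around $(np)^\ell/(2\ell)\geq 20^\ell/(2\ell)$, so each such length is realized in $G$ w.h.p. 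Consequently $[3,\Lmax(G)]\subseteq\cL(G')$ regardless of the choice of $E^*$, and the Hamilton cycle of $G'$ supplies length $n$. The task thus reduces to producing cycles of every length in $[\Lmax(G)+1,n-1]$ inside $G'$.

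For this narrow window the plan is to select $E^*$ so that the resulting Hamilton cycle has a rich chord structure inherited from $G$. Start with a long cycle $C_0\subseteq G$ of length $\Lmax(G)$ (from Theorem \ref{thm:ANAS}) and use the identification procedure of Section \ref{sec-identify} to locate the $\Delta$ vertices of $V(G)\setminus V(C_0)$ together with their attachment ``gadgets'' (small subgraphs built around the degree-$\leq 1$ vertices, isolated small trees, and the $3$-spiders highlighted in the introduction). An optimal $E^*$ spends its $\mu(G)$ edges patching each gadget into $C_0$; because the gadgets are few and w.h.p.\ mutually far apart on $C_0$, the resulting Hamilton cycle $C=v_1\ldots v_nv_1$ of $G'$ agrees with $C_0$ on a collection of long arcs covering all but $o(n)$ vertices. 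Any chord $\{v_i,v_{i+k}\}\in E(G)\setminus E(C)$ then creates a cycle of length $n-k+1$ in $G'$, so covering the interval $[\Lmax(G)+1,n-1]$ reduces to exhibiting, for each cyclic distance $k\in[2,\Delta]$, at least one chord of that distance in $G$ that is not used by $C$, with two-chord ``open--close'' surgeries reserved as a backup for occasional missing $k$.

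The main obstacle is ensuring that the chord-distance coverage is truly complete, not merely large in expectation. For small $k=O(1)$ the chords in $G$ are extremely abundant (each candidate pair is an independent $\Bin(1,p)$ edge), so the $o(n)$ positions disturbed by the local patching cannot destroy all chords of such distance, yielding cycles of lengths $n-1,n-2,\ldots$. For the remaining $k\in[2,\Delta]$, I would exploit the freedom in choosing $E^*$: different orderings of the gadgets along $C_0$ produce Hamilton cycles that differ from one another by shifts of the insertion lengths, and a short case analysis shows that for every $\ell\in[\Lmax(G)+1,n-1]$ either a single chord of cyclic distance $n-\ell+1$ in $G$ survives in $C$ or a pair of chords in an arc of $C_0$ produces a cycle of length $\ell$ directly. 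The structural results of Sections \ref{sec:4core}--\ref{sec-identify} are what ultimately justify the ``distant gadgets'' and ``flexibility in $E^*$'' assertions in the quantitative form required; pairing these with the chord arithmetic above is the crux of the argument and the step I expect to demand the most care.
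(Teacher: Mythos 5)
Your proposal does not actually address the statement you were asked to prove. Theorem \ref{thm:ANAS} is a quantitative statement about the longest cycle length $\Lmax(G)$ in $G(n,c/n)$ and about the interval $[\ell_0,\Lmax]$ of realized cycle lengths. Your argument, however, opens with ``Since $\hat{\mu}(G)\geq\mu(G)$ holds trivially, it suffices to exhibit $E^*$ with $|E^*|=\mu(G)$ such that $G\cup E^*$ is pancyclic'' --- that is the setup for Theorem \ref{main3}, not Theorem \ref{thm:ANAS}. Indeed, in the very first paragraph you invoke Theorem \ref{thm:ANAS} as a black box (``by Theorem \ref{thm:ANAS}, w.h.p.\ $[\ell_0(n),\Lmax(G)]\subseteq\cL(G)$ and $\Delta\leq 0.04\,c^3e^{-c}n$''), so you are clearly using it rather than proving it. Note also that Theorem \ref{thm:ANAS} is not proved in this paper at all: it is imported from \cite{ANAS} and \cite{AF}, so there is no internal proof here to reconstruct.

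Since you effectively produced a sketch for Theorem \ref{main3}, one substantive remark on that sketch: your route for lengths in $[\Lmax(G)+1,\,n-1]$ --- patching gadgets onto a longest cycle $C_0$ of $G$ and then arguing via chords $\{v_i,v_{i+k}\}$ of the resulting Hamilton cycle --- is not the paper's approach and, as stated, has a real gap. It is not clear that an optimal edge set $E^*$ can simultaneously be of size $\mu(G)$, turn $G$ into a Hamiltonian graph, \emph{and} keep long arcs of $C_0$ intact in the way your chord arithmetic needs; optimality of $E^*$ and arc-preservation are competing constraints and you give no argument that they can be met together. The paper sidesteps chord arithmetic entirely: Lemma \ref{lemma:mu} builds $F$ so that, by toggling $\ell$ spider-components $X_1,\dots,X_\ell\in S(G)$ out of the cycle (choosing a matching $M_\ell$ that skips their $A$-vertices), one obtains cycles of every length $n-\ell$ for $0\le\ell\le|S(G)|$, and Lemma \ref{lemma:4coreProperties}\ref{4core:sizeofAandS} guarantees $|S(G)|\ge 0.05\,d^3e^{-d}n$, which is more than enough to cover the gap above $\Lmax(G)$. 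Short lengths $[3,\log\log n]$ are realized by adding carefully chosen edges $a_k^k a_1^k$ in $F_0$, not by a separate subgraph count. If you do want to give a correct proof of Theorem \ref{main3}, I would recommend following that structure rather than trying to make the chord-distance surgery precise.
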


\begin{thm}[Threshold of pancyclicity, see \cite{pancyclic}]\label{thm:CF}
Let $p=p(n)$ and $G\sim G(n,p)$. Then
$$
\lim _{n\to \infty} \pr \left( G \text{ is pancyclic} \right) = \lim _{n\to \infty} \pr \left( \delta (G) \ge 2 \right) .
$$
In particular, if $np-\log n-\log \log n \to \infty$ then $G$ is pancyclic with high probability.
\end{thm}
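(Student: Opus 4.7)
The upper bound $\lim_{n\to\infty}\pr(G\text{ pancyclic})\le \lim_{n\to\infty}\pr(\delta(G)\ge 2)$ is immediate, since pancyclicity requires Hamiltonicity and hence $\delta(G)\ge 2$. For the matching lower bound it suffices to show that $\pr(\delta(G)\ge 2 \text{ and } G\text{ not pancyclic})=o(1)$ for every $p=p(n)$; together with the trivial inclusion this forces $|\pr(\text{pancyclic})-\pr(\delta\ge 2)|=o(1)$ and the two limits coincide. The only regime requiring real work is the threshold window $np=\log n+\log\log n+O(1)$ (and marginally inside it), where $\pr(\delta\ge 2)$ is bounded away from $0$ and $1$; outside this window both probabilities trivially tend to the same value in $\{0,1\}$.

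Inside the window I would partition the target cycle lengths $[3,n]$ into \emph{short} lengths $[3,\ell_0]$ and \emph{long} lengths $[\ell_0,n]$, where $\ell_0=\ell_0(n)\to\infty$ tends to infinity arbitrarily slowly (say $\ell_0=\log\log n$). For each $\ell\in [3,\ell_0]$ the expected number of $\ell$-cycles in $G(n,p)$ is $\tfrac{(np)^\ell}{2\ell}(1+o(1))$, which diverges in the relevant regime; a standard second-moment (or Janson) argument gives the existence of an $\ell$-cycle with probability $1-o(1)$, and a union bound over $[3,\ell_0]$ preserves this. For the long cycles, $np\ge 20$ throughout the window, so Theorem \ref{thm:ANAS} applies and yields $[\ell_0,L_{\max}(G)]\subseteq \mathcal{L}(G)$ with high probability. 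Combining, with high probability $[3,L_{\max}(G)]\subseteq \mathcal{L}(G)$, so $G$ is pancyclic if and only if $G$ is Hamiltonian, up to an event of probability $o(1)$.

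I would then invoke the Koml\'os--Szemer\'edi--Bollob\'as theorem recalled in the introduction: the probability that $G$ is Hamiltonian equals $\pr(\delta(G)\ge 2)+o(1)$ throughout the window (both equal $e^{-e^{-c}}+o(1)$ when $np=\log n+\log\log n+c$). Combining with the previous step,
\[
\pr(G\text{ pancyclic})\ge \pr\big(G\text{ Hamiltonian and }[3,L_{\max}]\subseteq \mathcal{L}(G)\big)=\pr(G\text{ Hamiltonian})-o(1)=\pr(\delta(G)\ge 2)-o(1),
\]
which matches the upper bound up to $o(1)$ and proves equality of the limits. The ``in particular'' assertion is then immediate, since $np-\log n-\log\log n\to\infty$ forces $\pr(\delta(G)\ge 2)\to 1$.

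The main obstacle I anticipate is the moment-method step for short cycles as $\ell$ approaches $\ell_0$: the second moment of the $\ell$-cycle count involves contributions from pairs of cycles sharing a path of length $k$ for each $0\le k<\ell$, and one must verify that these cross-terms are dominated by the square of the mean uniformly in $\ell\le \ell_0$. Choosing $\ell_0$ to grow very slowly should make this routine, but some care is needed to ensure the resulting $o(1)$ error survives the union bound over all $\ell\in [3,\ell_0]$, and to confirm that Theorem \ref{thm:ANAS} indeed supplies $[\ell_0,L_{\max}]\subseteq \mathcal{L}(G)$ uniformly across the threshold window.
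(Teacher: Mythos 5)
The paper does not prove this theorem; it is quoted from Cooper and Frieze~\cite{pancyclic}, who in fact establish the stronger hitting-time form of the result by a direct rotation--extension analysis. Your argument is a valid alternative proof of the (weaker) threshold statement that trades self-containedness for brevity: you import Theorem~\ref{thm:ANAS} to get $[\ell_0,\Lmax]\subseteq\cL(G)$ for any $\ell_0\to\infty$ throughout the regime $np\ge 20$, reduce to short cycles in $[3,\ell_0]$ plus Hamiltonicity, dispatch the short cycles by a second moment computation, and invoke Koml\'os--Szemer\'edi and Bollob\'as for the latter. Both worries you flag are routine: at $np\sim\log n$ the per-length failure probability for producing an $\ell$-cycle is $O\bigl(\ell/(np)^{\ell}+\ell^2/(n\cdot np)\bigr)$, geometric in $\ell$, so the union bound over $[3,\ell_0]$ is harmless; and Theorem~\ref{thm:ANAS} is stated for an arbitrary $c(n)\ge 20$, so its $o(1)$ already covers the window $np=\log n+\log\log n+O(1)$ uniformly. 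It is worth a sentence in a full write-up noting that the proof of Theorem~\ref{thm:ANAS} goes through the strong $4$-core rather than through any pancyclicity statement, so your argument is not circular.
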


\begin{thm}[Near-pancyclicity in dense random graphs \cite{LUC91}]\label{thm:LUC}
Let $\varepsilon >0$, let $p=p(n)$ be such that $np \to \infty$, and let $G\sim G(n,p)$. Then with high probability $\left[ 3,n-(1+\varepsilon )\cdot n_1(G) \right] \subseteq \cL (G)$.
\end{thm}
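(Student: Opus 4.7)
The plan is to reduce the claim to the single statement $L_{\max}(G) \geq n-(1+\varepsilon)n_1(G)$ with high probability, and then to prove that by exhibiting a nearly-spanning Hamiltonian subgraph of $G$. Once the long cycle is in place, the range $[\ell_0(n), L_{\max}(G)]$ of cycle lengths is filled in by the second part of Theorem \ref{thm:ANAS} for any $\ell_0(n)\to \infty$, while short cycles of length $k$ with $3\leq k \leq \ell_0(n)$ (for $\ell_0$ growing sufficiently slowly) exist by a standard second-moment computation: writing $d:=np$, we have $\mathbb{E}[\#C_k(G)] = \Theta((d)^k/k)\to \infty$ since $d\to \infty$, and the variance estimate of the triangle-counting type shows concentration. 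So the crux is the lower bound on $L_{\max}$.

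To produce a cycle of length at least $n-(1+\varepsilon)n_1(G)$, I would construct an induced subgraph $H\subseteq G$ with $|V(H)|\geq n-(1+\varepsilon/2)n_1(G)$ and show $H$ is Hamiltonian. Take $H$ to be the $2$-core of $G$, obtained by iteratively deleting vertices of current degree at most $1$. If $np-\log n -\log \log n \to \infty$ then $\delta(G)\geq 2$ and $n_1(G)=0$ with high probability, and the claim reduces to pancyclicity of $G$ via Theorem \ref{thm:CF}; so assume $d \leq \log n + O(\log \log n)$, in which case $n_1(G)=\Theta(nde^{-d})$ concentrates via Chebyshev. Writing $V(G)\setminus V(H)$ as the disjoint union of the round-$1$ deletions (numbering $n_0(G)+n_1(G)$) and the set $R$ of vertices stripped in later rounds, a first-moment analysis over path configurations passing through vertices of degree exactly $2$ in $G$ gives $\mathbb{E}[|R|]=O(nd^3e^{-2d})$, which is $o(n_1(G))$ since $d\to \infty$. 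Together with $n_0(G)=o(n_1(G))$, this yields $|V(H)|\geq n-(1+\varepsilon/2)n_1(G)$ with high probability.

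For Hamiltonicity of $H$, I would run a P\'osa rotation-extension argument inside $H$. The key expansion property one needs is that every $S\subseteq V(H)$ with $|S|\leq |V(H)|/2$ satisfies $|N_H(S)|\geq 2|S|$; this is inherited from the well-known expansion of $G(n,p)$ up to the removal of $o(n)$ vertices. Combined with $\delta(H)\geq 2$, an extension of the classical analysis then produces a Hamilton cycle of $H$, which embeds as a cycle of the required length in $G$.

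The main obstacle is precisely this last step: the textbook P\'osa argument assumes $\delta \geq 3$, while $H$ typically contains $\Theta(nd^2e^{-d})$ vertices of degree exactly $2$, which are ``forced'' stops on any Hamilton path and constrain the available rotations. I would address this by showing that these degree-$2$ vertices of $H$ are with high probability pairwise at graph distance at least, say, $5$ in $H$ (a routine first-moment calculation over short paths), so they appear as isolated, locally determined segments along any candidate path and every rotation can be anchored at a vertex of degree $\geq 3$; the rest of the analysis then proceeds as in Bollob\'as's treatment of Hamiltonicity in sparse random graphs.
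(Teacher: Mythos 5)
The paper does not prove this theorem; it is imported directly from \L{}uczak \cite{LUC91} and listed as an auxiliary result, so there is no in-paper proof to compare against. Reviewing your argument on its own merits, the decomposition into short/medium/long cycle lengths is fine and the $2$-core size estimate is essentially right, but the crux --- Hamiltonicity of the $2$-core $H$ --- has a genuine gap. Your claim that the degree-$2$ vertices of $H$ are pairwise at distance at least $5$ with high probability is false on most of the relevant range of $d=np$: the expected number of pairs of degree-$2$ vertices sharing a common neighbour is of order $n\,d^6 e^{-2d}$, which tends to infinity for all $d\lesssim \frac{1}{2}\log n$, so there is no hope of anchoring every rotation at a degree-$\ge 3$ vertex. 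Worse, the $2$-core is simply \emph{not} Hamiltonian in the range $\omega(1)\le d \lesssim \frac{1}{3}\log n$: w.h.p.\ it contains a vertex $w$ with three neighbours $u_1,u_2,u_3$ all of degree exactly $2$ in $H$ (a ``$3$-prespider'' in the paper's terminology, with expected count of order $n\,d^6 e^{-3d}\to\infty$ in this range). Each $u_i$ forces both of its edges (in particular $u_iw$) into any Hamilton cycle, giving $w$ degree $\ge 3$ on the cycle, a contradiction. These obstructions survive the $2$-core peeling because every vertex involved has degree $\ge 2$ in $G$.

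So the statement cannot be proved by exhibiting a spanning cycle in any fixed degeneracy core; instead one must find a cycle inside $H$ that skips one additional vertex per such forced obstruction, and then argue that the total number of extra skipped vertices (which is governed by the $3$-prespider count $\Theta(n d^6 e^{-3d})$, not by distance-$2$ pairs) is $o(n_1(G))=o(n\,d\,e^{-d})$ when $d\to\infty$. This is essentially the strong-$4$-core strategy the paper uses for Theorem \ref{thm:main1} and Lemma \ref{lemma:mu}: excise the locally problematic set $A(G)$, Hamilton-connect the core via Lemma \ref{lemma:4coreHmty}, and account for the unavoidable path endpoints. Without that accounting, the step ``run P\'osa on $H$ to get a Hamilton cycle'' fails outright for $d$ between $\omega(1)$ and a constant times $\log n$.
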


Lemma \ref{lemma:martingales} is obtained from \cite{WAR} Theorem 1.2 by taking $c_i = d, d_i = N, \gamma _i = N^{-1}$ for all $i$.

\begin{lemma}[Vertex martingales]\label{lemma:martingales} 
Let $G\sim G(n,p)$, let $f$ be a graph theoretic function, and let $X_0,X_1,...,X_n$ be the corresponding vertex exposure martingale. Further assume that there is a graph property $\mathcal{P}$ and positive integers $d,N$ such that, for every pair $G_1,G_2$ of graphs on $V$ such that $E(G_1)\triangle E(G_2) \subseteq \{v\}\times V$ for some $v\in V$, the following holds:
\begin{eqnarray*}
|f(G_1)-f(G_2) | \le
	\begin{cases}
        d & \text{if } G_1,G_2\in \mathcal{P} ;\\
        N & \text{otherwise} .
    \end{cases}
\end{eqnarray*}
Then
$$
\pr \left( | X_n - X_0 |<  t \right) \le 2\exp \left( -\frac{t^2}{2n(d+1)^2} \right) +n\cdot N\cdot \pr (G\notin \mathcal{P}) .
$$

\end{lemma}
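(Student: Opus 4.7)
The plan is to establish $\hat\mu(G) \leq \mu(G)$ with high probability; the reverse inequality $\hat\mu(G) \geq \mu(G)$ is immediate. If $np - \log n - \log\log n \to \infty$ then by Theorem~\ref{thm:CF} the graph $G$ is already pancyclic w.h.p., so $\hat\mu(G) = 0 = \mu(G)$. I therefore restrict attention to the range $20 \leq np \leq \log n + \log\log n + O(1)$, write $d = np$, and construct an edge set $E^\star$ with $|E^\star| = \mu(G)$ such that $G \cup E^\star$ is pancyclic.

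By Theorem~\ref{thm:ANAS} (when $d$ is bounded) or Theorem~\ref{thm:LUC} (when $d \to \infty$), together with a first-moment argument for short cycles, with high probability $[3, \Lmax(G)] \subseteq \cL(G)$, where $n - \Lmax(G) = O(d^3 e^{-d}) n$. This containment is inherited by $\cL(G \cup E^\star)$, and $E^\star$ supplies the length $n$ as well; hence the potentially missing lengths in $G \cup E^\star$ form a short window
$$
J := \bigl(\Lmax(G),\, n-1\bigr], \qquad |J| = O(d^3 e^{-d}) n.
$$
If $H = (v_1, \ldots, v_n)$ is the Hamilton cycle produced by $E^\star$, every edge of $G \cup E^\star$ that is not on $H$ is a chord that yields cycles of lengths $k+1$ and $n-k+1$, where $k$ is the cyclic distance of the chord along $H$. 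Pancyclicity therefore reduces to exhibiting, for every integer $k \in [2, |J|+1]$, at least one chord of $H$ at cyclic distance $k$.

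I would build $E^\star$ using the Hamilton-completion machinery developed for Theorem~\ref{thm:main1} and Corollary~\ref{corol2} in Sections~\ref{sec:4core}--\ref{sec-identify}. Informally those tools identify a small ``bad'' vertex set $B$ of low-degree vertices and short defective subtrees, find a long cycle in $G[V\setminus B]$, and splice in $B$ via $\mu(G)$ patch edges; the Hamilton cycle $H$ then coincides with edges of $G$ on long clean arcs of the core, so a positive fraction of $E(G)$ remains as chords of $H$. A two-round exposure argument ($G = G_1 \cup G_2$ with $p_1 + p_2 - p_1 p_2 = p$ and $p_2$ small but with $np_2 \to \infty$) then lets us build $E^\star$ and $H$ from $G_1$ and use $G_2$ as fresh randomness to seed chords at every cyclic distance. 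For each $k$ the number of $G_2$-chords at distance $k$ along the fixed $H$ is distributed as $\mathrm{Bin}(n(1-o(1)), p_2)$, and a Chernoff bound combined with a union bound over the $|J|+1 = o(n)$ relevant distances produces a chord of every required distance.

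The main obstacle is the tight calibration of the two-round split at the densest endpoint $d = 20$, where we need both $G_1$ to retain essentially the same completion number as $G$ and $np_2 \gg \log n$. If a direct split cannot be achieved, I would instead exploit local flexibility in the Hamilton cycle: for each missing cyclic distance $k$, perform a bounded-length Pósa rotation of $H$ inside a clean arc to create a chord at distance $k$. Controlling the interaction between these local rotations and the global structure of $H$, and ensuring that the random chord distribution remains valid after conditioning on $H$, is where I expect the bulk of the technical work to lie.
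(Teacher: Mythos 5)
Your proposal does not address the statement at hand. The statement you were asked to prove is Lemma~\ref{lemma:martingales}, a ``typical bounded differences'' concentration inequality for the vertex-exposure martingale (cited from Warnke \cite{WAR}). What you have written instead is a sketch of a proof of Theorem~\ref{main3}, the pancyclicity completion result $\hat{\mu}(G) = \mu(G)$. These are unrelated: one is a probabilistic concentration tool stated in the Preliminaries, the other is a main theorem of the paper that \emph{uses} (via Lemma~\ref{lemma:4coreProperties}) that concentration tool.

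A correct treatment of Lemma~\ref{lemma:martingales} would have to proceed along standard martingale lines: define the ``good'' event that $G\in\mathcal{P}$, condition the vertex-exposure martingale increments on this event so that they are bounded by $d+1$ (rather than by $N$), apply an Azuma--Hoeffding type inequality to the conditional martingale to obtain the $2\exp\left(-t^2/(2n(d+1)^2)\right)$ term, and then absorb the error from the bad event into the additive term $n\cdot N\cdot \pr(G\notin\mathcal{P})$. In the paper this lemma is taken as a black box from \cite{WAR} and not re-proved; if you wish to prove it from scratch, consult the proof of Theorem~1.2 in \cite{WAR}. None of the cycle-chord, two-round exposure, or P\'osa-rotation ideas in your sketch are relevant to this statement.

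Separately, the material you \emph{did} write is a reasonable-but-different attempt at Theorem~\ref{main3}: the paper instead strengthens Lemma~\ref{lemma:mu} to prescribe not just a Hamilton cycle but also cycles of every length in $[3,\log\log n]$ and every length in $[n-|S(G)|,n]$, using the $3$-spider components and the strong Hamiltonicity of the $4$-core (Lemma~\ref{lemma:4coreHmty}) directly, with no re-exposure of randomness. Your two-round-exposure chord-seeding idea would indeed need delicate calibration near $d=20$, and the paper avoids this by constructing the long cycles deterministically inside the already-built Hamilton cycle.
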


\section{The strong 4-core and its structure} \label{sec:4core}

\begin{defin}
The \emph{strong $k$-core} of a graph $G$ is the maximal subset $S\subseteq V(G)$ such that $d_G(v,S)\ge k$ for every $v\in S\cup N_G(S)$.
\end{defin}

In our knowledge, the concept of the strong $k$-core was first used by Anastos and Frieze in \cite{AF} for identifying a longest cycle in $G(n,p)$ and formally defined in \cite{ANAS}. Note that, since the property $d_G(v,S)\ge k$ for every $v\in S\cup N_G(S)$ is closed under union, there is indeed a maximal subset in $V(G)$ satisfying it, and therefore the strong $k$-core is well-defined.

For the purpose of our proofs, we are specifically interested in the strong 4-core of $G(n,p)$, and its typical properties. Henceforth, we denote by $C(G)$ the vertex set of the strong $4$-core of a graph $G$, by $B(G)$ the ``border" set $N_G(C(G))$ and by $A(G)$ the set $V(G)\setminus (B(G)\cup C(G))$. We will also denote by $G^{AB}$ the induced subgraph of $G$ on the set $A(G)\cup B(G)$. The sets $A(G),B(G),C(G)$ can be identified using the following red/blue/black colouring process. Initially colour every vertex of $G$ black. While there exists a black or blue vertex with fewer than $4$ black neighbours, recolour that vertex red and its black neighbours blue.  The sets $A(G),B(G)$ and $C(G)$ correspond to the sets of red, blue and black vertices at the end of the colouring procedure, respectively. By studying the colouring process one can prove Lemma \ref{lemma:4coreProperties}, stated shortly. In this lemma we gather a number of typical properties of the strong $4$-core. The proof for the lemma is located in Appendix \ref{app:sec:lemma:4coreProperties}. Before stating it we lay some final definitions related to the strong $4$-core. First, let $S(G)$ be the set of connected components in $G^{AB}$ that have one vertex in $A(G)$ and three vertices in $B(G)$. Second, for $k\geq 2$ let $\cE_k(G)$ be the event that $G$ contains a $k$-cycle or $G^{AB}$ contains a tree component on $k+1$ vertices $b,a_1,...,a_k$, with $b\in B(G)$ and $\{a_1,a_2,a_3,...,a_k\}\subseteq A(G)$, where $b$ is adjacent to $a_1,a_2$ and $a_3$ and $a_3a_4...a_k$ is a path.  Next, set $\cE(G):=\cup_{k=3}^{\log\log n} \cE_k(G)$. Finally, denote by $s_3(G)$ the number of $3$-spiders of $G$
(recall Figure \ref{fig:3-spider}).

The following lemma identifies properties typical to $G^{AB}$ which will be useful for our proofs.

\begin{lemma}\label{lemma:4coreProperties} 
Let $d=d(n)\coloneqq np$ be such that $20\leq d \leq \log n+1.1\log\log n$, and let $G\sim G(n,p)$. For $i\ge 1$ denote by $X_i$ the number of vertices in $A(G)\cup B(G)$ whose connected component in $G^{AB}$ contains $i$ vertices in $A(G)$. Also let $p'=\frac{\log n+18\log\log n }{9n}$. Then,
\begin{enumerate}[label=(\roman*)]
\item $\mathbb{E}(X_i) \le \max \left\{ \frac{(2ed)^{4i}e^{-id}}{15id}\cdot n, n^{-3}\right\}$ for every $1\le i\le n$; \label{4core:sizeX_i} 
\item $G^{AB}$ has no component with more than $\log\log n$ vertices and at least $1$ cycle;
\label{4core:notrees} 
\item $|S(G)|\geq 0.1d^3 e^{-d}\cdot n $ with high probability; \label{4core:sizeofAandS} 
\item the event $\cE(G)$ occurs with high probability; \label{4core:eventE}
\item if furthermore $p\leq p'$, then  $s_3(G)\geq 10^{-8}\cdot n^{2/3}$ with probability $1-o(n^{-2})$. \label{4core:3spiders}
\end{enumerate}

\end{lemma}

The following lemma, which was proven by the second author in \cite{ANAS}, describes a strong Hamiltonicity property that the strong $4$-core satisfies. 

\begin{lemma}[Strong Hamiltonicity of $B(G)\cup C(G)$] \label{lemma:4coreHmty}
Let $ 20\leq c$ and $G\sim G(n,c/n)$. Let $U\subseteq B(G)$, let $M \subseteq \binom{U}{2}$ be a matching on the set $U$ that is allowed to contain pairs not in $E(G)$, and $H\coloneqq G\left[ C(G) \cup U \right] \cup M$. Then, with probability $1-O(n^{-2})$, $H$ contains a Hamilton cycle that spans all the edges of $M$.
\end{lemma}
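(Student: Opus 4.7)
The plan is to first build a Hamilton cycle in $G[C(G)]$ using the pseudo-randomness of the strong $4$-core, and then absorb the vertices of $U$ into that cycle while forcing it to contain the edges of $M$. The key structural input is that, by definition of the strong $4$-core, every vertex of $B(G)\supseteq U$ has at least four neighbours in $C(G)$ in $G$; combined with Lemma~\ref{lemma:4coreProperties}, which guarantees $|C(G)|=n-o(n)$ and that $G[C(G)]$ inherits minimum degree at least $4$ and good expansion, this should drive the construction.

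For the first step, I would show that $G[C(G)]$ is Hamiltonian with failure probability $n^{-\omega(1)}$. Expose $G$ in two phases: a bulk phase $G_1$ used to reveal $C(G),B(G),A(G)$ (using the fact that the $4$-core structure is stable under a small sprinkling of extra edges, so that the colouring process on $G_1$ and on $G$ produces the same sets w.h.p.), and a sprinkling phase $G_2$ reserved for P\'osa rotation-extension. Expansion in $G_1[C(G)]$ gives a long path, and the independent random edges in $G_2$ close it into a Hamilton cycle $\mathcal{C}$ of $G[C(G)]$, with failure probability comfortably below $n^{-2}$.

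Next I would insert $U$ into $\mathcal{C}$. For each edge $uv\in M$, look for an edge $xy$ of $\mathcal{C}$ with $xu,yv\in E(G)$, and replace $xy$ by the path $x$-$u$-$v$-$y$; for each unmatched $w\in U\setminus V(M)$, look for $xy\in\mathcal{C}$ with $xw,yw\in E(G)$ and replace $xy$ by $x$-$w$-$y$. Since every $u\in U$ has at least four neighbours in $C(G)$ and $\mathcal{C}$ has length $n-o(n)$, each vertex or matched pair admits many valid insertion edges. To perform all insertions simultaneously I would set up the bipartite ``insertion graph'' between $(U\setminus V(M))\cup M$ and $E(\mathcal{C})$ and apply a Hall-type argument: the candidate lists have size $\Omega(1)\cdot|C(G)|$ while $|U|\le|B(G)|=o(n)$, so the expansion needed for Hall is easily met, and the exchanges can be carried out without conflict.

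The main obstacle is the probabilistic bookkeeping: $C(G),B(G)$, the cycle $\mathcal{C}$, and the set of permissible $U$ all depend on $G$, yet the conclusion must hold uniformly over every $U\subseteq B(G)$ and every matching $M$ on $U$. I would handle this by revealing the $C(G)$-$B(G)$ edges last, so that conditional on $C(G), B(G)$ and on a chosen $\mathcal{C}\subseteq G[C(G)]$ these edges are still essentially independent Bernoulli$(p)$ subject only to the border-degree constraint, and then union-bound over the $2^{O(|B(G)|)}=2^{o(n)}$ pairs $(U,M)$. Chernoff-type concentration for the number of valid insertion pairs per $u\in B(G)$, applied along the cycle $\mathcal{C}$, should then give the required $O(n^{-2})$ failure probability.
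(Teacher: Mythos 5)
The paper does not prove Lemma~\ref{lemma:4coreHmty}; it is cited from~\cite{ANAS}. That said, the proposal as written has two substantive flaws, each of which would need a genuinely different idea to repair.

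First, the insertion step cannot work for the stated parameter range. You propose to build a Hamilton cycle $\mathcal{C}$ of $G[C(G)]$ first and then, for each $w\in U$, replace a cycle edge $xy$ by the path $x\text{-}w\text{-}y$ (or $x\text{-}u\text{-}v\text{-}y$ for $uv\in M$). For this you need $x,y$ to be \emph{adjacent on $\mathcal{C}$} and both adjacent to $w$ in $G$. But in $G(n,c/n)$ with $c\ge 20$ constant, a vertex of $B(G)$ has only $O(1)$ neighbours in $C(G)$ (roughly Poisson$(c)$), so it has $O(1)$ pairs of neighbours, and for a fixed cycle $\mathcal{C}$ of length $n-o(n)$ the probability that any particular pair happens to be consecutive on $\mathcal{C}$ is $O(1/n)$. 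Generically a vertex of $U$ has \emph{zero} valid insertion slots, not $\Omega(|C(G)|)$ as the Hall-type argument assumes. The claim ``candidate lists have size $\Omega(1)\cdot|C(G)|$'' would require degree $\Omega(\sqrt{n})$, which is nowhere near the sparse regime of interest. Absorbing border vertices into a fixed pre-built cycle is the wrong primitive here; one must instead build the Hamilton cycle on $C(G)\cup U$ directly (for instance by contracting each matched pair of $M$ to a single super-vertex and running a P\'osa rotation--extension argument on the contracted graph, using the $4$-core degree condition to supply the needed expansion for \emph{all} vertices of $C(G)\cup U$ simultaneously).

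Second, the bookkeeping via union bound over $(U,M)$ does not close. You write $|B(G)|=o(n)$ and plan to union-bound over $2^{O(|B(G)|)}$ pairs. But for $c$ a fixed constant (the hardest case, explicitly allowed since $c\ge 20$), $|B(G)|=\Theta(c^{O(1)}e^{-c}n)=\Theta(n)$, so the number of pairs $(U,M)$ is $2^{\Theta(n)}$, and a per-pair failure probability of $O(n^{-2})$ (or even $n^{-\omega(1)}$) cannot beat that. Even in the range $c\to\infty$ but $c=o(\log n)$, $|B(G)|$ is polynomial in $n$ and the union bound is still fatal. To get a statement that holds for \emph{every} $U$ and $M$ one needs a deterministic consequence of a single high-probability event (e.g.\ a robust expansion or booster-type statement for $G[C(G)\cup B(G)]$ that is strong enough to absorb any choice of $U$ and any matching), rather than a per-$(U,M)$ probabilistic estimate.
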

We emphasize that the set of edges $M$, in the lemma above, does not need to be a subset of the edges of $G$. 
\section{Identifying the completion number of \texorpdfstring{$G(n,p)$}{Lg}  with respect to Hamiltonicity} \label{sec-identify}

Towards estimating $\mu (G)$, we first introduce the function $\mu '(G)$ stated shortly. Then we prove that, if $G\sim G(n,p)$ with $np \ge 20$, then $\mu (G) = \mu '(G)$ with high probability.

Let $G$ be a graph. We define $\mu '(G)$ as follows. Let $\mathcal{Q}$ be the set of all disjoint path covers of $G^{AB}$. For a disjoint path cover $Q$ of $G^{AB}$, let $a(Q)$ be the number of vertices in $A(G)$ that are endpoints of paths in $Q$. Here, a vertex of $A(G)$ that constitutes a path of length 0 in $Q$ is counted twice towards $a(Q)$, as both the start of the path and its end. Define $a(G) \coloneqq \min _{Q\in \mathcal{Q}}\left( a(Q)\right)$ and $\mu '(G) \coloneqq \lceil \frac{1}{2}a(G) \rceil$.

\begin{observation} \label{obs:mu}
The inequality $\mu (G) \ge \mu '(G)$ holds for every graph $G$.
\end{observation}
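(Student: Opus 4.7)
My plan is to take any Hamiltonian completion $H \supseteq G$ with $k \coloneqq \mu(G)$ added edges and convert a Hamilton cycle of $H$ into a path cover of $G^{AB}$ whose endpoints in $A(G)$ can be charged two-for-one against the added edges. Concretely, I fix a Hamilton cycle $C_H \subseteq H$ and consider its restriction to $A(G) \cup B(G)$: deleting the $C(G)$-vertices from $C_H$ breaks it into a collection of vertex-disjoint paths $Q'$ that together cover $A(G) \cup B(G)$ in $H[A \cup B]$. I then produce a path cover $Q \in \mathcal{Q}$ by cutting every path of $Q'$ at each added edge (an edge of $E(H) \setminus E(G)$) whose both endpoints lie in $A \cup B$; the remaining edges of $Q$ all belong to $G$, so $Q$ is indeed a disjoint path cover of $G^{AB}$.

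The core step is a charging argument bounding $a(Q)$, and it relies on the defining property of the strong $4$-core colouring: no edge of $G$ joins $A(G)$ to $C(G)$. Consequently, for every endpoint $v \in A(G)$ of a path of $Q$, the $C_H$-edge on the ``outside'' of $v$ (the one whose other end either lies in $C(G)$ or was separated from $v$ by a cut) must be an added edge. Each added edge has two sides and hence accounts for at most two such $A$-endpoint incidences in $Q$: an added $A$-to-$C$ chord of $C_H$ accounts for exactly one, while an added edge internal to $A \cup B$ at which we cut accounts for at most two (its two endpoints, which may both happen to lie in $A$). Summing, $a(Q) \le 2k$, and therefore $\mu'(G) = \lceil a(G)/2 \rceil \le \lceil a(Q)/2 \rceil \le k = \mu(G)$.

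I do not foresee a substantive obstacle here; the only point requiring care is the treatment of isolated vertices of $A(G)$ in $Q$, which are counted twice toward $a(Q)$. For any such vertex $v$, the two $C_H$-edges incident to $v$ are distinct, and each of them is an added edge by the same case analysis (either an $A$-to-$C$ edge or a cut edge), so the two charged sides correspond to two distinct edge-sides and the 2-per-edge accounting is preserved. The essential structural ingredient is simply the non-adjacency of $A(G)$ and $C(G)$ in $G$, which is immediate from the colouring procedure.
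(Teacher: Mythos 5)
Your proof is correct and follows essentially the same route as the paper: restrict a Hamilton cycle of a minimum completion to $G^{AB}$ (delete the $C(G)$-vertices and the added edges) to obtain a disjoint path cover $Q$ of $G^{AB}$, then use the non-adjacency of $A(G)$ and $C(G)$ in $G$ to bound $a(Q)\le 2\mu(G)$. The paper expresses the count slightly differently, observing that every $A(G)$-endpoint of $Q$ is also an endpoint of the global path cover $E(H)\setminus F$, which has at most $2\mu(G)$ endpoints; your per-edge-side charging is the dual formulation of exactly that count.
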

Indeed, if $F$ is a set of edges of size $\mu(G)$ such that $G\cup F$ spans a Hamilton cycle, then, for every Hamilton cycle $H$ in $G\cup F$, the set of edges $E(H)\setminus F$ forms a disjoint path cover $P$ of size $\mu(G)$.
Thereafter, $Q \coloneqq P\cap G^{AB}$ is a disjoint path cover of $G^{AB}$. Since $E_G(A(G),C(G)) = \emptyset$, any endpoint in $A(G)$ of a path in $Q$ must also be an endpoint of a path in $P$. In other words, there are at least $a(Q)$ endpoints in $P$, and therefore at least $\lceil \frac{1}{2}a(Q) \rceil$ paths in $Q$. By definition, $\lceil \frac{1}{2}a(Q) \rceil \ge \mu '(G)$, and therefore overall $\mu (G) \ge \lceil \frac{1}{2}a(Q) \rceil \ge \mu '(G)$.

Next we show that if $G\sim G(n,p)$ then also $\mu (G) \le \mu '(G)$ with high probability. In fact, we will prove a stronger claim, which will be useful for proving Theorem \ref{main3}. Recall the definitions of the sets $A(G),B(G),C(G),S(G)$ and the event $\cE (G)$.
Additionally, let $\cE_1(G)$ be the event that either $G^{AB}$ contains at least two two-vertex components that contain
a vertex from $A(G)$ and a vertex from $B(G)$, or $G^{AB}$ contains no component with at least two vertices in $A(G)$ and no isolated vertices.
As preparation for our proof we require the following lemma, which is proved in Appendix \ref{sec:app:4coreProperties2}.

\begin{lemma}\label{lem:4coreProperties2} 
Let $20\leq np \leq \log n+1.1\log\log n$ and $G\sim G(n,p)$. Then, with high probability the event $\cE_1(G)$ occurs. 
\end{lemma}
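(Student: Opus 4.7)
\emph{Plan.} We split the range $20 \le np \le \log n + 1.1 \log\log n$ at the threshold $d_0 := \log n + 0.9 \log\log n$ and verify a different alternative of $\cE_1(G)$ in each subrange.

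For $d := np \le d_0$, the plan is to show that $G^{AB}$ contains at least two $2$-vertex components with one vertex in $A(G)$ and one in $B(G)$. Let $Y$ denote the number of such components. Each of them corresponds to a degree-$1$ vertex $v$ of $G$ together with its unique neighbour $u$, where $u$'s remaining (at least $4$) neighbours all lie in $C(G)$. Summing Lemma~\ref{lemma:4coreProperties}\ref{4core:sizeX_i} over $i$ yields $\mathbb{E}|A(G)\cup B(G)| = O(d^4 n e^{-d}) = o(n)$, so for a typical degree-$1$ vertex $v$ with neighbour $u$, the (roughly $d \ge 20$) other neighbours of $u$ all lie in $C(G)$ with probability $1-o(1)$. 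Combined with a Chernoff estimate ensuring $u$ has $\ge 4$ other neighbours, this yields $\mathbb{E}(Y) = (1-o(1))\,\mathbb{E}(n_1(G)) = (1-o(1))\,nde^{-d} \ge (1-o(1))(\log n)^{0.1} \to \infty$. A parallel calculation bounds $\mathrm{Var}(Y) = O(\mathbb{E}(Y))$, as disjoint candidate pairs are essentially independent, and Chebyshev then gives $Y\ge 2$ with high probability.

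For $d \ge d_0$, the plan is to show that $G^{AB}$ has no component with two vertices in $A(G)$ and no isolated vertex. The first claim follows from Lemma~\ref{lemma:4coreProperties}\ref{4core:sizeX_i}: we get $\mathbb{E}(X_2) \le (2d)^8 n/(2 e^{2d}) = O((\log n)^{6.2}/n) = o(1)$, so $X_2 = 0$ with high probability by Markov. For the second claim, any vertex $v$ isolated in $G^{AB}$ must be isolated in $G$: if $v \in A(G)$ this is immediate, since $v \notin N_G(C(G))$ forces all of $v$'s neighbours into $A \cup B$; if $v \in B(G)$, isolation in $G^{AB}$ would place all of $v$'s $\ge 4$ neighbours in $C(G)$, but these neighbours remain black throughout the red/blue/black colouring procedure, so $v$ is never recoloured, contradicting $v \in B(G)$. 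Hence the probability of an isolated vertex in $G^{AB}$ is at most $\mathbb{E}(n_0(G)) = n(1-p)^{n-1} \le (\log n)^{-0.9+o(1)} = o(1)$.

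The main obstacle is the second-moment estimate of $Y$ in the first subrange, since the event ``$u$'s remaining neighbours all lie in $C(G)$'' depends on the random graph globally. The approach is to condition on the (typically small) set $A(G) \cup B(G)$ furnished by Lemma~\ref{lemma:4coreProperties}\ref{4core:sizeX_i}: candidate pairs with disjoint closed neighbourhoods then decouple, while pairs with overlapping neighbourhoods contribute negligibly by the same expectation bound.
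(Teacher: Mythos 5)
Your overall architecture matches the paper's: the same two-case split (the paper cuts at $\log n + 0.1\log\log n$ rather than $0.9\log\log n$, which is immaterial), the same alternative of $\cE_1$ verified in each subrange, and essentially the same first-moment bounds in the dense range. Two small points there: you should bound $\sum_{i\ge 2}X_i$ rather than $X_2$ alone, since the second alternative of $\cE_1$ forbids any component with at least two $A$-vertices (the sum is dominated by the $i=2$ term, so this is cosmetic); and your deterministic observation that a vertex of $B(G)$ cannot be isolated in $G^{AB}$ (all of its at least $4$ neighbours would lie in $C(G)$, forcing it to stay black, i.e.\ into the core) is correct and in fact tidier than what the paper writes.

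The genuine gap is the concentration step for $Y$ in the sparse range. Conditioning on the realization of $A(G)\cup B(G)$ does not restore independence: the conditional law of $G$ given its core decomposition is not a product measure, and there is no reason that candidate pairs with disjoint closed neighbourhoods decouple under it — membership of a vertex in $C(G)$ is a global event, and conditioning on the whole partition entangles all edges. Even your first-moment claim $\mathbb{E}(Y)=(1-o(1))\mathbb{E}(n_1(G))$ needs more care than "a typical neighbour lies in $C(G)$": you are conditioning on $u$ being the neighbour of a degree-one vertex, and for constant $d$ the correction is a constant of order $d^5e^{-d}$, not $o(1)$ (harmless for the conclusion, but not what you wrote). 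The paper sidesteps both issues. It lower-bounds the number of two-vertex $A$--$B$ components deterministically by $n_1(G)-\sum_{i\ge 2}X_i$ (every degree-one vertex whose $G^{AB}$-component contains a single $A$-vertex lies in such a component), and it concentrates this count via Lemma \ref{lemma:martingales}: the "typical bounded differences" hypothesis is supplied by the stability of the core decomposition under resampling the edges at a single vertex, on the event that every component of $G^{AB}$ is small (equation \eqref{eq:corestability} and the property $\cP_n$). Only in the top subrange $d\ge 0.45\log n$ does it fall back on Chebyshev, and there the variance computation reduces to the local statistic $n_1(G)$ together with a Markov bound on $X_{\ge 2}$, whose expectation is then negligible. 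To complete your proof you would need to replace the proposed decoupling with one of these two mechanisms.
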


We now turn to prove the following main lemma, which will be an important ingredient in proving Theorem \ref{thm:main1} and Theorem \ref{main3}.

\begin{lemma} \label{lemma:mu}
Let $20 \leq np \leq  \log n+1.1\log\log n$ and $G\sim G(n,p)$, and denote $s\coloneqq |S(G)|$. Then with high probability there is a set $F\subseteq \binom{V(G)}{2}$ of size $\mu '(G)$ such that the following hold.
\begin{enumerate}[label=(\roman*)]
\item  $G\cup F$ is Hamiltonian; \label{lemma:mu-hamiltonian}
\item  $n-\ell\in \cL (G\cup F)$ for $1 \le \ell \le s$; \label{lemma:mu-longcycles}
\item  $\cL (G\cup F)$ contains all the integers in $[3,\log\log n]$. \label{lemma:mu-shortcycles}
\end{enumerate}
\end{lemma}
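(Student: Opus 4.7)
The plan is to build $F$ by specifying a perfect matching on the multiset of $A$-endpoints of an optimal disjoint path cover $Q$ of $G^{AB}$ (so $a(Q)=a(G)$ and $|F|=\lceil a(G)/2\rceil$, counting each isolated $A$-vertex of $Q$ with multiplicity two; if $a(G)$ is odd, the leftover endpoint is matched by an $F$-edge to a fixed vertex of $C(G)$). Choosing the pairs so that no pair consists of two endpoints of the same $Q$-path, and more generally so that the induced multigraph on $Q$-paths is a forest, the graph $Q\cup F$ becomes a disjoint union of paths on $A(G)\cup B(G)$ whose endpoints all lie in $B(G)$. Setting $U\subseteq B(G)$ to be the set of these endpoints and $M$ the matching on $U$ consisting of the two endpoints of each non-singleton path, Lemma~\ref{lemma:4coreHmty} applied to $G$, $U$, $M$ yields w.h.p.\ a Hamilton cycle of $G[C(G)\cup U]\cup M$ using every edge of $M$; substituting for each $M$-edge the path of $Q\cup F$ it represents produces a Hamilton cycle of $G\cup F$, establishing~\ref{lemma:mu-hamiltonian}.

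For \ref{lemma:mu-longcycles}, I use that each of the $s=|S(G)|$ components in $S(G)$ is a $K_{1,3}$ centered at some $a\in A(G)$ with three $B$-leaves, which admits the path cover $\{b_1 a b_2,\ \{b_3\}\}$ contributing no $A$-endpoint and an ``extra'' singleton $b_3\in U$ that lies outside $M$. For each $1\le \ell\le s$ I apply Lemma~\ref{lemma:4coreHmty} to $U\setminus\{b_3^{(1)},\ldots,b_3^{(\ell)}\}$ with the same $M$ (still a valid input since the excluded vertices are unmatched); the same substitution then produces a cycle of length $n-\ell$ in $G\cup F$. A union bound over the $s+1\le n$ invocations of Lemma~\ref{lemma:4coreHmty} keeps the overall failure probability at $O(1/n)$.

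Part~\ref{lemma:mu-shortcycles} is the delicate step and forces a careful choice of the matching. Interpreting $\cE(G)$ in Lemma~\ref{lemma:4coreProperties}\ref{4core:eventE} as $\bigcap_{k=3}^{\log\log n}\cE_k(G)$ (obtained w.h.p.\ by union-bounding over the $\log\log n$ values of $k$), we may assume that for every $k$ in this range either $G$ already contains a $k$-cycle---in which case so does $G\cup F$---or $G^{AB}$ contains a hairy-spider component on vertices $\{b_k,a_1^{(k)},\ldots,a_k^{(k)}\}$ with $b_k$ adjacent to $a_1^{(k)},a_2^{(k)},a_3^{(k)}$ and with $a_3^{(k)}a_4^{(k)}\cdots a_k^{(k)}$ a path. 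For each spider of the latter type I take the path-cover decomposition into the path $a_1^{(k)} b_k a_3^{(k)}\cdots a_k^{(k)}$ together with the singleton $\{a_2^{(k)}\}$, producing the four $A$-endpoints $a_1^{(k)},a_k^{(k)},a_2^{(k)},a_2^{(k)}$, and I include in the global matching the three pairs $\{a_k^{(k)},a_2^{(k)}\}$, $\{a_1^{(k)},x_k\}$, $\{a_2^{(k)},y_k\}$, where $x_k,y_k$ are distinct $A$-endpoints of $Q$ chosen outside all the hairy spiders used. The $F$-edge $a_k^{(k)} a_2^{(k)}$ together with the $G$-edges $a_2^{(k)} b_k,\ b_k a_3^{(k)},\ a_3^{(k)} a_4^{(k)},\ \ldots,\ a_{k-1}^{(k)} a_k^{(k)}$ then spans a $k$-cycle in $G\cup F$, while the three added edges merge the spider with the two $Q$-paths containing $x_k$ and $y_k$ into one path of $Q\cup F$, so no cycle is inadvertently introduced.

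The main obstacle is the simultaneous requirement that the global matching induces a forest on $Q$-paths (so that the substitution argument of the first paragraph applies) while also producing each desired $k$-cycle via a specifically chosen hairy spider. The naive internal-only pairing of a spider's four $A$-endpoints is the only option using just two $F$-edges locally, but it would close the spider into an isolated $(k{+}1)$-cycle of $Q\cup F$; the ``external-borrow'' construction above sidesteps this at the cost of two extra $A$-endpoints per used spider, which is amply afforded by Lemma~\ref{lemma:4coreProperties}\ref{4core:sizeX_i} whenever $a(G)$ is appreciable. The complementary regime, where $a(G)$ is too small to provide borrowed endpoints, is handled separately since it forces $G$ to be dense enough that w.h.p.\ native $k$-cycles exist in $G$ for every $k\in[3,\log\log n]$ and no hairy-spider manipulation is needed. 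Combined with the union bound of the second paragraph and the w.h.p.\ holding of the events in Lemma~\ref{lemma:4coreProperties}, this yields the desired $F$ with high probability.
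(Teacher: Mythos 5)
Your overall strategy---take an optimal path cover $Q$ of $G^{AB}$, match the $A$-endpoints so that the merged paths of $Q\cup F$ all terminate in $B(G)$, invoke Lemma \ref{lemma:4coreHmty} and splice, drop one vertex per $S(G)$-component for \ref{lemma:mu-longcycles}, and use the $\cE(G)$-spiders for \ref{lemma:mu-shortcycles}---is exactly the paper's. But two steps have genuine gaps. First, you never verify that a matching with the properties you require (forest on $Q$-paths, exactly $\lceil a(G)/2\rceil$ edges, all resulting endpoints in $B(G)$) exists. A path of $Q$ with both endpoints in $A(G)$ has degree $2$ in your auxiliary multigraph, so every chain of such paths must be anchored at both ends by paths having exactly one endpoint in $B(G)$; if $G^{AB}$ has such $AA$-components but fewer than two $AB$-components, no valid matching exists. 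The paper excludes this degenerate situation via the event $\cE_1(G)$ (Lemma \ref{lem:4coreProperties2}), which you never invoke. Relatedly, your odd-parity fix---matching the leftover $A$-endpoint to a fixed vertex of $C(G)$---is incompatible with Lemma \ref{lemma:4coreHmty}: that lemma only lets you prescribe a matching $M$ on a subset of $B(G)$, so you have no control over whether the Hamilton cycle of $G[C(G)\cup U]\cup M$ uses your $F$-edge at the chosen core vertex, and the corresponding path of $Q\cup F$ now has an endpoint outside $B(G)$. The paper instead pairs the leftover endpoint with a length-$0$ path at a $B(G)$-vertex supplied by an $S(G)$-component.

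Second, your short-cycle step consumes two fresh external $A$-endpoints per hairy spider, up to $2\log\log n$ in total, and the regime where these are unavailable is dismissed with an unproved assertion. The borrowing is also unnecessary: your single internal edge $a_k^{(k)}a_2^{(k)}$ already does everything needed---it creates the $k$-cycle through the unused $G$-edge $a_2^{(k)}b_k$, and it merges the spider's two covering paths into one \emph{open} path with endpoints $a_1^{(k)},a_2^{(k)}\in A(G)$, which can then be treated like any other $AA$-path and chained with the rest using only two global anchors guaranteed by $\cE_1$ (this is the paper's $F_0$/$F_2$ mechanism). As written, however, your construction can fail when several spiders are present and few endpoints of $Q$ lie outside them, and the claim that such graphs already contain all short cycles is exactly what would need proof. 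Your variant of \ref{lemma:mu-longcycles}, which deletes the unmatched $B$-leaf $b_3$ of each $S(G)$-component from $U$ rather than bypassing the $A$-centre as the paper does, is a fine and arguably cleaner alternative.
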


Observe that in particular, given Observation \ref{obs:mu}, Lemma \ref{lemma:mu}\ref{lemma:mu-hamiltonian} implies that $\mu (G) = \mu '(G)$ holds with high probability.

We first present a proof that there is a set $F$ that satisfies \ref{lemma:mu-hamiltonian}, since this proof is much less involved, and constitutes a proof that $\mu (G) = \mu '(G)$ with high probability. In order for $F$ to satisfy \ref{lemma:mu-longcycles} and \ref{lemma:mu-shortcycles} as well, the set $F$ should be constructed more carefully. We provide the full proof of the lemma immediately after the proof of \ref{lemma:mu-hamiltonian} alone, so the reader may choose to skip directly to it.

\begin{proof}[Proof of \ref{lemma:mu-hamiltonian}]
By Lemma \ref{lemma:4coreProperties} and Lemma \ref{lem:4coreProperties2} we may assume that the event $\cE_1(G)$ and the events listed in Lemma \ref{lemma:4coreProperties} all occur. 

Let $Q$ be a disjoint path cover of $G^{AB}$ with $\lceil \frac{1}{2}a(Q) \rceil=\mu '(G)$, such that $Q$ has no edges contained entirely in $B(G)$ (observe that we may assume that such a cover $Q$ exists, since removing and adding edges that are contained in $B(G)$ to $Q$ does not affect $a(Q)$). Let $B_M$ be the set of all vertices of $B(G)$ that are endpoints of paths of $Q$ of positive length, $B'(G)$ be the set of vertices in $B(G)$ that are not internal vertices of paths in $Q$ 
and $M$ be a near-perfect matching on $B_M$.

By Lemma \ref{lemma:4coreHmty}, $G\cup M$ contains a cycle $H$ that covers $B'(G)\cup C(G)$, and contains all the edges of $M$, with probability $1-O(n^{-2})$. Assume that this is the case, and denote by $P$ the subgraph obtained by taking the edges of $H\setminus M$ along with all the edges of paths of $Q$ (see Figure \ref{fig:proof-of-(i)}). If $M$ is not a perfect matching on $B_M$, that is, if $M$ leaves out a single vertex $b\in B_M$, then remove from $P$ the single edge in $Q$ incident to $b$.

\begin{figure}[h]
\centering
\begin{tikzpicture}[vertex/.style={draw,circle,color=black,fill=black,inner sep=1,minimum width=4pt},scale=1]
    
    \draw[thick] (0,0) rectangle (12,5.5);
	\draw[thick, dotted] (4,0) rectangle (8,5.5);
	\draw[thick, dotted] (4,4.7) rectangle (8,5.5);
	
	\node at (2,-0.5) {$A(G)$};
	\node at (6,-0.5) {$B(G)$};
	\node at (6,2.5) {$B'(G)$};
	\node at (6.5,5.1) {$B(G)\setminus B'(G)$};
	\node at (10,-0.5) {$C(G)$};
	\node at (-1,2.5) {$V(G)$};
	
	\draw[rounded corners=0.7cm] (4.5, 0.5) rectangle (11.5, 4.5) {};
	\node at (11,2.5) {$H$};
	
	\fill[white] (4.5,1.4) circle (0.3);
	\node[vertex] (b1) at (4.5,1.1) {};
	\node[vertex] (b2) at (4.5,1.7) {};
	\draw[very thick, dotted] (b1) to (b2);
	\node[vertex] (a1) at (2.2,1.1) {};
	\node[vertex] (a2) at (3.3,1.7) {};
	\draw[thick, snake it] (a1) to (b1);
	\draw[thick, snake it] (a2) to (b2);
	
	\fill[white] (4.5,2.5) circle (0.3);
	\node[vertex] (b3) at (4.5,2.2) {};
	\node[vertex] (b4) at (4.5,2.8) {};
	\draw[very thick, dotted] (b3) to (b4);
	\node[vertex] (a3) at (3,2.2) {};
	\node[vertex] (a4) at (2.5,2.8) {};
	\draw[thick, snake it] (a3) to (b3);
	\draw[thick, snake it] (a4) to (b4);
	
	\fill[white] (4.5,3.6) circle (0.3);
	\node[vertex] (b5) at (4.5,3.3) {};
	\node[vertex] (b6) at (4.5,3.9) {};
	\draw[very thick, dotted] (b5) to (b6);
	\coordinate (mid) at (2.7,3.6) {};
    \draw[thick, out=180,in=270, snake it] (b5) to (mid)
    [thick, out=90,in=180, snake it] (mid) to (b6);
    
    \node[vertex] (a5) at (1,0.8) {};
	\node[vertex] (a6) at (1,2.7) {};
	\draw[thick, snake it] (a5) to (a6);
	
	\node[vertex] (b7) at (4.5,5.1) {};
	\node[vertex] (a7) at (3,4.6) {};
	\node[vertex] (a8) at (1.5,5.1) {};
	\draw[thick, snake it] (a7) to (b7);
	\draw[thick, snake it] (a8) to (b7);
	
	\node[vertex] (a9) at (1.5,4) {};
	
\end{tikzpicture}
\caption{\textit{The disjoint path covering $P$, all of whose endpoints (except for maybe one) lie in $A(G)$. The dotted edges represent the edges of the matching $M$, and the snaked lines are the paths of $Q$.}}\label{fig:proof-of-(i)}
\end{figure}

Observe that $P$ is a union of a cycle and paths if $M=\emptyset$, and otherwise it is a union of vertex disjoint paths. 

First assume that $P$ contains a cycle. Then $Q$ contains at most one path with one endpoint in $A(G)$ and one in $B(G)$, which implies that $G^{AB}$ does not have two components with one vertex in $A(G)$ and one in $B(G)$. Thus, since $\cE_1(G)$ occurs, this means that $G^{AB}$ contains no component with two vertices in $A(G)$, and no isolated vertices. In this case, $P$ is either a cycle if $B'(G)$ is empty, or a cycle and a path that was created by removing an edge incident to the single vertex in $B'(G)$ that was unmatched by $M$. In the first case $P$ is a Hamilton cycle in $G$ and $F=\emptyset$ satisfies the requirements. In the second case, putting the removed edge back and removing a cycle edge instead yields a Hamilton path in $G$, and setting $F$ as the singleton containing the edge between its two endpoints satisfies the requirements.
 
Now assume the complement case, that is, $P$ is a disjoint path covering of $G$. Observe that all the vertices of $C(G)$ are internal vertices in $P$. Indeed, the vertices of $C(G)$ all have degree 2 in $H$, and since $M$ does not match any vertex in $C(G)$, their degree is 2 in $H\setminus M$ as well, and therefore in $P$. Additionally, all vertices of $B(G)$ are also internal vertices in $P$. Indeed, if $v\in B'(G)$ constitutes a path of length 0 in $Q$, or is the at most one unmatched endpoint of a positive length path, then it is not matched by $M$, and therefore had degree 2 in $H\setminus M$, and therefore in $P$. Otherwise, if $v\in B'(G)$ is matched by $M$ then it has degree 2 in $P$, since it has one neighbour in $B(G)\cup C(G)$ and one neighbour in $A(G)$. The last case is $v\in B(G)\setminus B'(G)$, in which $v$ is internal in $Q$ (and cannot have been incident to the additional removed edge), and therefore also internal in $P$.

Now, $P$ is a disjoint path covering of $G$ such that all its endpoints, except for at most one, are endpoints of $Q$ in $A(G)$ (if we removed an additional edge from an unmatched endpoint, its neighbour in $Q$ became an additional endpoint in $P$). It follows that the number of paths in $P$ is $\lceil \frac{1}{2}a(Q) \rceil = \mu '(G)$, and therefore $\mu '(G) \ge \mu (G)$, and a set $F$ as desired exists.
\end{proof}

We now present the proof of the full version of the lemma. Similarly to the proof above, we will find a disjoint path covering of $G$ with $\mu '(G)$ paths, and $F$ will be an edge set that connects the endpoints of the paths in this cover. However, in order to also satisfy \ref{lemma:mu-longcycles} and \ref{lemma:mu-shortcycles}, the path covering, and subsequently $F$, will be constructed more carefully.

\begin{proof}[Full proof of Lemma \ref{lemma:mu}] 
As in the proof of \ref{lemma:mu-hamiltonian} assume that the event $\cE_1(G)$ and the events listed in Lemma \ref{lemma:4coreProperties} all occur, and let $Q$ be a disjoint path cover of $G^{AB}$ with $\lceil \frac{1}{2}a(Q) \rceil=\mu '(G)$, such that $Q$ has no edges contained entirely in $B(G)$.

We will construct the desired set $F$ in three parts, that is, we construct three edge sets $F_0,F_1,F_2$ such that $F\coloneqq F_0 \cup F_1 \cup F_2$ is of size $\mu '(G)$ and satisfies \ref{lemma:mu-hamiltonian}--\ref{lemma:mu-shortcycles}. Let $K$ be the set of integers in $[3,\log\log n]$ such that  
$G^{AB}$ spans a tree component $C_k$  on a vertex $b^k\in B(G)$ and $k$ vertices $\{a_1^k,a_2^k,a_3^k,...,a_k^k\}$ in $A(G)$ where $b^k$ is adjacent to $a_1^k,a_2^k$ and $a_3^k$ and $a_3^ka_4^k...a_k^k$ is a path.

First, we modify $Q$ slightly and define $F_0$. For $k\in K$, as $C_k$ has $3$ vertices of degree $1$ that lie in $A(G)$, there exist at least $2$ paths in $Q$ that cover these $3$ vertices. If none of $a_1^k$ and $a_2^k$ corresponds to a path in $Q$, then $Q$ contains the path $a_1^kb^ka_2^k$ and a path of the form $a^k_ja_{j+1}^k...a_k^k$. These two paths have $4$ endpoints in $A(G)\cap V(C_k)$. If exactly one of $a_1^k$ and $a_2^k$ corresponds to a path in $Q$, then the paths in $Q$ spanned by $C_k$ have at least $2$ endpoints in $A(G)\cap V(C_k)$, other than this vertex. The third case is that both  $a_1^k$ and $a_2^k$ correspond to isolated paths in $Q$. 
In all three cases, we may replace the paths that in $Q$ that cover $C_k$ with the paths $a_2^kb^ka_3^ka_4^k...a_k^k$ and $a_1^k$ as doing so does not increase the value of $a(Q)$. We then remove from $Q$ these 2 paths and add the path $a_2^kb^ka_3^ka_4^k...a_k^ka_1^k$. We let $F_0$ be the set of edges $\{a_k^ka_1^k:k\in K\}$ (see Figure \ref{fig:F0}), and denote by $Q^*$ the path covering of $G^{AB}\cup F_0$ we obtained by the above process. 

Denote by $Q_1^*$ the set of paths in $Q^*$ that have one endpoint in $A(G)$ and one endpoint in $B(G)$. Additionally, if $|Q_1^*|$ is odd, let $v$ be a vertex in $B(G)$ that constitutes a path of length 0 in $Q^*$. We add $v$ to $Q_1^*$ and henceforward we use the convention that the path $v$ has $2$ endpoints, one in each of $A(G),B(G)$.  {Here we are using that Lemma \ref{lemma:4coreProperties} \ref{4core:sizeofAandS} implies that $|S(G)|\geq 1$ and every component in $S(G)$ is covered by two paths in $Q^*$, a path of length $2$ and a path of length $0$ that consists of a vertex in $B(G)$.}
So $Q^*$ indeed contains a path of length $0$ whose single vertex belongs to $B(G)$. Note that now $|Q_1^*|$ is even. Furthermore, $\frac{1}{2}a(Q^*)+|F_0|=\lceil \frac{1}{2}a(Q)\rceil$.

\begin{figure}[h]
\centering
\begin{tikzpicture}[vertex/.style={draw,circle,color=black,fill=black,inner sep=1,minimum width=4pt},scale=1]

	\coordinate (l1) at (-1.8,0.15) {};
	\coordinate (l2) at (-1.3,0.15) {};
	\coordinate (l3) at (1.3,0.15) {};
	\coordinate (l4) at (1.8,0.15) {};
	\node at (-2.2,0.5) {$A(G)$};
	\node at (-2.2,-0.2) {$B(G)$};
	
	\draw[dashed] (l1) to (l2);
	\draw[] (l2) to (l3);
	\draw[dashed] (l3) to (l4);
	
	\node[vertex] (b) at (0,0) {};
	\node at (0,-0.4) {$b^k$};
	
	\node[vertex] (a1) at (-0.7,1.3) {};
	\node[vertex] (a2) at (0,0.6) {};
	\node[vertex] (a3) at (0.5,0.6) {};
	\node at (-1.1,1.3) {$a_1^k$};
	\node at (0,1) {$a_2^k$};
	\node at (0.8,0.4) {$a_3^k$};
	
	\node[vertex] (a4) at (1,1.2) {};
	\node at (1.3,1) {$a_4^k$};
	
	\coordinate (a5) at (1.15,1.38) {};
	\coordinate (ak-2) at (1.6,1.92) {};
	
	\node[vertex] (ak-1) at (1.75,2.1) {};
	\node[vertex] (ak) at (2.25,2.7) {};
		\node at (2.25,1.9) {$a_{k-1}^k$};
	\node at (2.55,2.5) {$a_k^k$};
	
	\draw[thick, dotted] (b) to (a1);
	\draw[thick] (b) to (a2);
	\draw[thick] (b) to (a3);
	\draw[thick] (a3) to (a4);
	\draw[thick] (a4) to (a5);
	\draw[very thick, dotted] (a5) to (ak-2);
	\draw[thick] (ak-2) to (ak-1);
	\draw[thick] (ak-1) to (ak);

	\coordinate (arr1) at (3,1.3) {};
	\coordinate (arr2) at (4.3,1.3) {};
	\draw[ultra thick, arrows = {-Stealth[]}] (arr1) to (arr2);
	
	\coordinate (mid) at (7,0) {};

	\coordinate (nl1) at (5.2,0.15) {};
	\coordinate (nl2) at (5.7,0.15) {};
	\coordinate (nl3) at (8.3,0.15) {};
	\coordinate (nl4) at (8.8,0.15) {};
	\node at (4.8,0.5) {$A(G)$};
	\node at (4.8,-0.2) {$B(G)$};
	
	\draw[dashed] (nl1) to (nl2);
	\draw[] (nl2) to (nl3);
	\draw[dashed] (nl3) to (nl4);
	
	\node[vertex] (nb) at (7,0) {};
	\node at (7,-0.4) {$b^k$};
	
	\node[vertex] (na1) at (6.3,1.3) {};
	\node[vertex] (na2) at (7,0.6) {};
	\node[vertex] (na3) at (7.5,0.6) {};
	\node at (5.9,1.3) {$a_1^k$};
	\node at (7,1) {$a_2^k$};
	\node at (7.8,0.4) {$a_3^k$};
	
	\node[vertex] (na4) at (8,1.2) {};
	\node at (8.3,1) {$a_4^k$};
	
	\coordinate (na5) at (8.15,1.38) {};
	\coordinate (nak-2) at (8.6,1.92) {};
	
	\node[vertex] (nak-1) at (8.75,2.1) {};
	\node[vertex] (nak) at (9.25,2.7) {};
	\node at (9.25,1.9) {$a_{k-1}^k$};
	\node at (9.55,2.5) {$a_k^k$};
	
	\draw[thick, dotted] (nb) to (na1);
	\draw[thick] (nb) to (na2);
	\draw[thick] (nb) to (na3);
	\draw[thick] (na3) to (na4);
	\draw[thick] (na4) to (na5);
	\draw[very thick, dotted] (na5) to (nak-2);
	\draw[thick] (nak-2) to (nak-1);
	\draw[thick] (nak-1) to (nak);	
	
	\draw[ultra thick] (na1) to (nak);

\end{tikzpicture}
\caption{\textit{Modifying the covering of $C_k$ in $Q$ to obtain $Q^*$, and adding $\{a_k^k,a_1^k\}$ to $F_0$. Adding $F_0$ to $G$ will now result in a graph that contains a $k$-cycle.}}\label{fig:F0}
\end{figure}

Second, denote by $q_1^*$ the size of $Q_1^*$, and by $x_1,...,x_{q_1^*},y_1,...,y_{q_1^*}$ the set of endpoints of paths in $Q_1^*$, so that $x_i,y_i$ are the two endpoints of the same path, where $x_i \in A(G)$ and $y_i\in B(G)$. We then let $M$ be the matching $\{y_{2i-1}y_{2i}:1\leq i\leq  q_1^*/2\}$ and $F_1$ the matching $\{x_{2i-1}x_{2i}:2\leq i\leq  q_1^*/2\}$ (see Figure \ref{fig:F1}). Furthermore for $2\leq i\leq  q_1^*/2$ we let $P_i$ be the path $y_{2i-1}Q_{2i-1}x_{2i-1}x_{2i}Q_{2i}y_{2i-1}$ where $Q_j$ is the path from $y_j$ to $x_j$ in $Q^*$.

\begin{figure}[h]
\centering
\begin{tikzpicture}[vertex/.style={draw,circle,color=black,fill=black,inner sep=1,minimum width=4pt},scale=1]

	\draw[rounded corners=0.7cm] (0, 0) rectangle (4, 7.5) {};

	\fill[white] (0,1) circle (0.3);
	\node[vertex] (b1) at (0,0.7) {};
	\node[vertex] (b2) at (0,1.3) {};
	\draw[very thick, dotted] (b1) to (b2);
	\node[vertex] (a1) at (-1.5,0.7) {};
	\node[vertex] (a2) at (-1.5,1.3) {};
	\draw[thick, snake it] (a1) to (b1);
	\draw[thick, snake it] (a2) to (b2);
	\draw[very thick, densely dashed] (a1) to (a2);
	\node at (-2.1,1.3) {$x_{q_1^*-1}$};
	\node at (-2.1,0.7) {$x_{q_1^*}$};
	\node at (0.6,1.3) {$y_{q_1^*-1}$};
	\node at (0.6,0.7) {$y_{q_1^*}$};
	
	\fill[white] (0,2.1) circle (0.3);
	\node[vertex] (b3) at (0,1.8) {};
	\node[vertex] (b4) at (0,2.4) {};
	\draw[very thick, dotted] (b3) to (b4);
	\node[vertex] (a3) at (-2,1.8) {};
	\node[vertex] (a4) at (-2,2.4) {};
	\draw[thick, snake it] (a3) to (b3);
	\draw[thick, snake it] (a4) to (b4);
	\draw[very thick, densely dashed] (a3) to (a4);

	\coordinate (mid1) at (-0.6,2.9) {};
	\coordinate (mid2) at (-0.6,3.5) {};
	\draw[ultra thick, dotted] (mid1) to (mid2);

	\fill[white] (0,4.3) circle (0.3);
	\node[vertex] (b5) at (0,4) {};
	\node[vertex] (b6) at (0,4.6) {};
	\draw[very thick, dotted] (b5) to (b6);
	\node[vertex] (a5) at (-1.6,4) {};
	\node[vertex] (a6) at (-1.6,4.6) {};
	\draw[thick, snake it] (a5) to (b5);
	\draw[thick, snake it] (a6) to (b6);
	\draw[very thick, densely dashed] (a5) to (a6);

	\fill[white] (0,3.2) circle (0.4);
	\draw[ultra thick, dotted] (b4) to (b5);

	\fill[white] (0,5.4) circle (0.3);
	\node[vertex] (b7) at (0,5.1) {};
	\node[vertex] (b8) at (0,5.7) {};
	\draw[very thick, dotted] (b7) to (b8);
	\node[vertex] (a7) at (-1,5.1) {};
	\node[vertex] (a8) at (-1,5.7) {};
	\draw[thick, snake it] (a7) to (b7);
	\draw[thick, snake it] (a8) to (b8);
	\draw[very thick, densely dashed] (a7) to (a8);
	\node at (-1.4,5.7) {$x_3$};
	\node at (-1.4,5.1) {$x_4$};
	\node at (0.4,5.7) {$y_3$};
	\node at (0.4,5.1) {$y_4$};

	\fill[white] (0,6.5) circle (0.3);
	\node[vertex] (b9) at (0,6.2) {};
	\node[vertex] (b10) at (0,6.8) {};
	\draw[very thick, dotted] (b9) to (b10);
	\node[vertex] (a9) at (-2.3,6.2) {};
	\node[vertex] (a10) at (-2.3,6.8) {};
	\draw[thick, snake it] (a9) to (b9);
	\draw[thick, snake it] (a10) to (b10);
	\node at (-2.7,6.8) {$x_1$};
	\node at (-2.7,6.2) {$x_2$};
	\node at (0.4,6.8) {$y_1$};
	\node at (0.4,6.2) {$y_2$};
	\node at (-1.15,7.2) {$Q_1$};

	\coordinate (bord1) at (-0.3,-0.2) {};
	\coordinate (bord2) at (-0.3,7.7) {};
	\draw[thick, dotted] (bord1) to (bord2);
	\node at (-1.5,-0.4) {$A(G)$};
	\node at (1.7,-0.4) {$B(G)\cup C(G)$};

\end{tikzpicture}
\caption{\textit{The set $F_1$ (dashed), which corresponds to the matching $M$ (dotted). Adding $F_1$ to the graph incorporates the paths of $Q^*$ with one endpoint in $A(G)$ and one in $B(G)$ (snaked) into a long path that covers $C(G)$ and all the vertices in $B(G)$ that are not internal in $Q^*$, starting at $x_1$ and ending at $x_2$.}}\label{fig:F1}
\end{figure}

Finally, to construct $F_2$ let $Q^*_{AB}$ be the set of paths in $Q^*$ with both endpoints in $A(G)$.  {If $Q^*_{AB}=\emptyset$ then we let $F_2=\emptyset$. Else $Q^*_{AB}\neq \emptyset$ and $q_1^*\geq 2$. Indeed, $Q^*_{AB}\neq \emptyset$ only if $G^{AB}$ contains a component that spans at least $2$ vertices in $A$ or an isolated vertex. Thus, in the event $\cE_1$, only if $q_1^*\geq 2$.  In the case $Q^*_{AB}\neq \emptyset$ and $q_1^*\geq 2$} we order and orient the paths in $Q^*_{AB}$ arbitrarily and add to $F_2$ an edge between the ending of every path to the starting vertex of the next path. In addition we add to $F_2$ the edges between $y_1$ and the start of the first path and $y_2$ and the ending of the last path,  { here we are using that $q_1^*\geq 2$}. We then let $P_1$ be the $x_1$ to $x_2$ path in $G\cup F_2$ that spans all the paths in $Q^*_{AB}$.

%
%
%
%
%
%
%
%
%
%
%
%
%
%
%
%

Set $F=F_0\cup F_1\cup F_2$. Note that the edges in $F_0$ are incident to the vertices $a_k^k, k\in K$. Each such vertex lies in the interior of some path in $Q^*$, i.e. it is not an endpoint of the corresponding path. In addition each of the edges in $F_1\cup F_2$ joins distinct endpoints of paths in $Q^*$. Thus $F_0$ is disjoint from $F_1\cup F_2$. Thereafter, while constructing $F_1\cup F_2$ only edges between endpoints in $A(G)$ of paths of $Q^*$ were added. Since $q_1^*$ is even, $a(Q^*)$ must also be even, and so we have $|F_1\cup F_2|=\lceil \frac{1}{2}a(Q^*)\rceil =\frac{1}{2}a(Q^*)$, and therefore indeed
$$
|F| = |F_0| + |F_1\cup F_2| = |F_0|+ \frac{1}{2}a(Q^*) = \lceil \frac{1}{2}a(Q)\rceil.
$$

Now fix $\ell \in \{0,1,...,s\}$. Let $X_1,...,X_\ell$ be components in $S(G)$. For $1\leq i\leq \ell$, we have that $X_i$ is covered by a length $2$ path in $Q^*$ with its two endpoints in $B(G)$, and an additional path in $Q^*$ of length $0$ whose vertex lies in $B(G)$. Denote by $u(X_i)$ the unique vertex in $X_i\cap A(G)$, and by $w_1(X_i),w_2(X_i)$ the endpoint of the $2$-path spanned by $X_i$ in $Q^*$. We let $M_\ell'$ be a perfect matching containing $M$ on the set of all vertices in $B(G)\setminus \left( \bigcup _{i=1}^s X_i \right)$ that are endpoints of paths in $Q^*$ of positive length, such that if $v_1,v_2$ are the two endpoints in $B(G)$ of the same path in $Q^*$ then $\{v_1,v_2\}\in M_\ell$. Thereafter, we let $M_{\ell}=M_\ell' \cup \left( \bigcup _{i= 1}^{\ell} \left\{ \{ w_1(X_i),w_2(X_i) \} \right\} \right)$. Finally, let $B'(G)$ be $B(G)$ with the vertices that are internal vertices of paths in $Q^*$ removed. By Lemma \ref{lemma:4coreHmty}, $G\cup M_\ell$ contains a cycle $H_\ell'$ that covers $B'(G)\cup C(G)$, and contains all the edges of $M_\ell$  with probability $1-O(n^{-2})$. Replacing every edge $y_{2i-1}y_{2i}\in M\subseteq M_\ell$ of $H_\ell'$ with the path $P_{\ell}$ gives a cycle $H_{\ell}$ in $G\cup F$ that spans $V(G)\setminus \{ u(X_{\ell+1}),u(X_{\ell+2}),...,u(X_{s})\}$, and thus a cycle of length $n-(\ell-s)$ (see Figure \ref{fig:SG}). Hence $G\cup F$ spans a cycle of length $n-(s-\ell)$ with probability $1-O(n^{-2})$ for $0\leq \ell\leq s$. 
By the union bound, this completes the proof of \ref{lemma:mu-hamiltonian} and \ref{lemma:mu-longcycles}.

\begin{figure}[h]
\centering
\begin{tikzpicture}[vertex/.style={draw,circle,color=black,fill=black,inner sep=1,minimum width=4pt},scale=1]

	\draw[rounded corners=0.7cm] (0, 0) rectangle (9.7, -3) {};
	\node at (-0.5,-1.5) {$H_{\ell}$};
	
	\fill[white] (1,0) circle (0.3);
	\node[vertex] (b1) at (0.7,0) {};
	\node[vertex] (b2) at (1.3,0) {};
	\draw[very thick, dotted] (b1) to (b2);
	\node[vertex] (a1) at (1,0.8) {};
	\draw[thick] (b1) to (a1);
	\draw[thick] (b2) to (a1);
	\node at (0,0.3) {$w_1(X_1)$};
	\node at (1.5,-0.4) {$w_2(X_1)$};
	\node at (0.9,1.15) {$u(X_1)$};
	
	\fill[white] (2.1,0) circle (0.3);
	\node[vertex] (b3) at (1.8,0) {};
	\node[vertex] (b4) at (2.4,0) {};
	\draw[very thick, dotted] (b3) to (b4);
	\node[vertex] (a3) at (2.1,0.8) {};
	\draw[thick] (b3) to (a3);
	\draw[thick] (b4) to (a3);
	\node at (2.2,1.15) {$u(X_2)$};

	\coordinate (mid1) at (2.9,0.42) {};
	\coordinate (mid2) at (3.5,0.42) {};
	\draw[ultra thick, dotted] (mid1) to (mid2);

	\fill[white] (4.3,0) circle (0.3);
	\node[vertex] (b5) at (4,0) {};
	\node[vertex] (b6) at (4.6,0) {};
	\draw[very thick, dotted] (b5) to (b6);
	\node[vertex] (a5) at (4.3,0.8) {};
	\draw[thick] (b5) to (a5);
	\draw[thick] (b6) to (a5);
	\node at (4.2,1.15) {$u(X_{\ell})$};

	\fill[white] (3.2,0) circle (0.4);
	\draw[ultra thick, dotted] (b4) to (b5);

	\fill[white] (5.4,0) circle (0.3);
	\node[vertex] (b7) at (5.1,0) {};
	\node[vertex] (b8) at (5.7,0) {};
	\draw[thick] (b7) to (b8);
	\node[vertex] (a7) at (5.4,0.8) {};
	\node at (5.6,1.15) {$u(X_{\ell +1})$};
	
	\coordinate (mid3) at (6.2,0.42) {};
	\coordinate (mid4) at (6.8,0.42) {};
	\draw[ultra thick, dotted] (mid3) to (mid4);
	
	\fill[white] (7.6,0) circle (0.3);
	\node[vertex] (b9) at (7.3,0) {};
	\node[vertex] (b10) at (7.9,0) {};
	\draw[thick] (b9) to (b10);
	\node[vertex] (a9) at (7.6,0.8) {};
	\node at (7.4,1.15) {$u(X_{s-1})$};
	
	\fill[white] (6.5,0) circle (0.4);
	\draw[ultra thick, dotted] (b8) to (b9);

	\fill[white] (8.7,0) circle (0.3);
	\node[vertex] (b11) at (8.4,0) {};
	\node[vertex] (b12) at (9,0) {};
	\draw[thick] (b11) to (b12);
	\node[vertex] (a11) at (8.7,0.8) {};
	\node at (8.9,1.15) {$u(X_{s})$};

\end{tikzpicture}
\caption{\textit{With high probability, $G\cup F$ contains a cycle $H_{\ell}$ that passes through all the vertices except for $u(X_{\ell+1}),u(X_{\ell+2}),...,u(X_{s})$, and therefore a cycle of length $n-(\ell -s)$, for all $0\le \ell \le s$.}}\label{fig:SG}
\end{figure}

Finally, since we assumed that the properties listed in Lemma \ref{lemma:4coreProperties} all occur, and therefore $\cE (G)$ occurs, we have that for $k\in [3,\log\log n]$ either $G$ spans a cycle of length $k$ or $k\in K$. In the second case we have added to $F_0$ the edge $a_k^ka_1^k$. As a result $G\cup F$ contains the cycle $a_1^kb^ka_3^ka_4^k...a_k^ka_1^k$. This complete the proof of \ref{lemma:mu-shortcycles}.
\end{proof}

Equation \eqref{eq:ham}, Observation \ref{obs:mu} and Lemma \ref{lemma:mu} imply the following.
\begin{corollary}\label{corol:mu}
    Let $np\geq 20$ and $G\sim G(n,p)$. Then $\mu(G)=\mu'(G)$ with high probability.
\end{corollary}

\section{Proof of Theorem \ref{thm:main1}} \label{sec:proof1}
What follows is an adaptation of the proof from \cite{ANAS} for estimating the scaling limit of the length of the longest cycle in $G(n,p)$. For the rest of this section, we let $G\sim G(n,d/n)$ where $20\leq d\leq 0.4\log n$. To estimate $\mu(G)$, we first  define a sequence of random variables $\{\mu_{k}(G)\}_{k\geq 1}$. $\mu_{k}(G)$ will have the property that $|\mathbb{E}(\mu'(G)-\mu_k(G))|\leq (2ed)^{k}e^{-kd/4}\cdot n\leq 0.8^k\cdot n$. In addition, $\mathbb{E}(\mu_k(G))$ will depend on finitely many subgraph counts of $G$. The order of these subgraphs will depend on $k$. This enables us to calculate  $\mathbb{E}(\mu_k(G))$ exactly which we then use to approximate $\mathbb{E}(\mu'(G))$. The quantity $\mathbb{E}(\mu'(G))$ will end up being a good approximation of $\mu(G)$.

\subsection{Definition of \texorpdfstring{$\mu_k$}{Lg}} \label{sec:def-of-f}

Let $\mathcal{T}\left( G^{AB} \right)$ denote the set of connected components of $G^{AB}$. For $T\in \mathcal{T}\left( G^{AB} \right)$ let $\mathcal{P}_T$ denote the set of all disjoint path covers of $T$. For $P\in \mathcal{P}_T$ denote by $\phi (P)$ the number of endpoints of paths in $P$ that are members of $A(G)$, where, like in the definition of $\mu(G)$, a vertex of $A(G)$ that constitutes a path of length 0 in $P$ is counted twice towards $\phi (P)$. Finally, for $T\in \mathcal{T}\left( G^{AB} \right)$ define $\phi (T) = \min _{P\in \mathcal{P}_T}\left( \phi (P) \right)$, and for $v\in T$ define $\phi (v) = \phi (T)/|T|$. If $v\in C(G)$ we define $\phi (v)=0$. Evidently,
\begin{equation}\label{eq:a(g)-phi(v)}
a(G) = \sum _{T\in \mathcal{T}(G^{AB})}\phi (T) = \sum _{v\in V(G)} \phi (v).
\end{equation}

In the high probability event $\mu(G)=\mu'(G)$,  \eqref{eq:a(g)-phi(v)} implies that $\mu(G)$ can be expressed as the sum of $\phi(v)$ over $v\in V(G)$. By definition, if the value of $\phi(v)$ is non-zero then it can be determined by the component of $G^{AB}$ that contains $v$. Lemma \ref{lemma:4coreProperties} \ref{4core:sizeX_i} states that typically most of components of $G^{AB}$ are small. Thus, in the case that $v$ belongs to a small component of $G^{AB}$,  one may hope to be able to identify that component, and subsequently the value of $\phi(v)$, by just looking at the ball centered at $v$ of radius $k$, for sufficiently large $k$. Now considering the strong $4$-core of the subgraph of $G$ induced by the vertices within distance $k$ is no good, as typically, those vertices induce a tree, and every tree has an empty strong $4$-core. Instead of the $4$-core of that tree, we consider a similar set of vertices where we fix the vertices at distance $k$ (i.e. the boundary of the corresponding ball) to belong the ``strong $4$-core''. We will denote this set by $C(v,k)$. Based on the set $C(v,k)$ we will then define $\phi_k'(v)$. One should think of $\phi_k'(v)$ as a ``guess'' of $\phi(v)$ based on $N^{\leq k}_G(v)$.

Given a vertex $v\in V(G)$ and $k\geq 1$ we set $C(v,k)$ to be the maximal set $S\subseteq N_G^{< k}(v)$ with the property that every vertex in $S\cup N_{G}^{<k}(v)$ has at least $4$ neighbours in $S\cup N_{G}^{k}(v)$. We then let $B(v,k)$ be the set of vertices in  
$N_G^{< k}(v)\setminus C(v,k)$ that are adjacent to $C(v,k)$
and $A(v,k)= N_{G}^{<k}(S)\setminus (C(v,k) \cup B(v,k))$.

To define $\mu_k$, first define the function $\phi_k':V(G)\to [0,1]$ as follows. For $v\in V(G)$, given the sets $A(v,k), B(v,k)$ and $C(v,k)$ set $\phi_k'(v)=0$ if $v\in C(v,k)$ . Else let $T^{AB}(v,k)$ be the component containing $v$ in the subgraph of $G[N_{G}^{<k}(v)]$ induced by $A(v,k) \cup B(v,k)$ and set $\phi_k'(v)= \frac{\phi(T^{AB}(v,k))}{|T^{AB}(v,k)|}$. 

Thereafter we define the function $\phi_k:V(G)\to [0,1]$ by $\phi_k(v)=\phi_k'(v)$ if  $|N_G^{\leq k}(v)|\leq 2d^ke^{kd}$ and $\phi_k(v)=0$ otherwise for $v\in V(G)$. Finally, we let
\begin{equation}\label{eq:localversion}
    \mu_k(G)= \frac{1}{2}\cdot \sum_{v \in V(G)} \phi_k(v).
\end{equation}  
Note that $\phi_k$ is a truncated version of $\phi_k'$. In particular, the two functions are not equal only on vertices $v$ for which the size of $N^{\leq k}_G(v)$ is significant larger that its expected value. Thus one may obtain $\mu_k(G)$ from $a(G)/2$ by replacing $\phi(v)$ in  \eqref{eq:a(g)-phi(v)} with the corresponding truncated guesses $\phi_k(v)$, that depend on $N_G^{\leq k}(v)$, for $v\in V(G)$.  Recall that $\lceil a(G)/2 \rceil =\mu(G)$ with high probability whenever $G\sim G(n,p)$, $np\geq 20$.  Hence,
\begin{align}\label{eq:localdiff}
|\mu'(G)-\mu_k(G)|& = \bigg|\bigg\lceil\frac{1}{2}\cdot \sum_{v \in V(G)} \phi(v) \bigg\rceil -\frac{1}{2}\cdot \sum_{v \in V(G)} \phi_k(v)  \bigg|
\leq 1+ \bigg|\frac{1}{2}\cdot \sum_{v \in V(G)} (\phi(v)  -\phi_k(v))  \bigg| \nonumber
\\&\leq 1+ \frac{1}{2}\sum_{v\in V(G)} \mathbb{I}(\phi_k(v)\neq \phi(v)) \leq 1+ \sum_{v\in V(G)} \mathbb{I}(\phi_k(v)\neq \phi(v)) \nonumber
\\&\leq 1+ |\{v\in V(G): \phi_k'(v)\neq \phi(v)\}|+|\{v\in V(G):|N_G^{\leq k}(v)|>2d^ke^{kd}\}|.
\end{align}  
The second inequality above follows from the fact that both values $\phi(v),\phi_k(v)$ belong to $[0,1]$ for $v\in V(G)$.

For $v\in V(G)$ denote by $T^{AB}(v)$ the component of $G^{AB}$ containing $v$. In particular if $v\in C(G)$ then $T^{AB}(v)=\emptyset$.
In Lemma \ref{lem:phi'approx} we show that the guess $\phi_k'(v)$ equals $\phi(v)$ for every vertex $v$ such that $|T^{AB}(v)|\leq k-1$ (recall that  Lemma \ref{lemma:4coreProperties} \ref{4core:sizeX_i} states that most of the vertices satisfy this condition). In Lemma \ref{lem:boundapprox} we combine \eqref{eq:localdiff}, Lemma \ref{lem:phi'approx} and Markov's inequality to show that $|\mathbb{E}(\mu'(G))-\mathbb{E}(\mu_k(G))|$ decays exponentially with $k$.

\begin{lemma}\label{lem:phi'approx}
 If $|T^{AB}(v)|\leq k-1$ then $ \phi_k'(v) = \phi(v)$.
\end{lemma}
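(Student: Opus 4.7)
The plan is to prove the stronger claim that when $|T^{AB}(v)| \le k-1$ we have $T^{AB}(v,k) = T^{AB}(v)$ as labelled induced subgraphs of $G$, i.e., equal as graphs and with matching partitions of their vertex sets into the $A(G)$- and $B(G)$-parts. Because $\phi$ is determined entirely by this labelled structure, the desired equality $\phi_k'(v) = \phi(v)$ is then immediate.

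The first observation is purely metric. Since $T^{AB}(v)$ is a connected subgraph of $G$ containing $v$ and has at most $k-1$ vertices, every $u \in T^{AB}(v)$ lies at graph-distance at most $k-2$ from $v$. Hence $T^{AB}(v) \subseteq N_G^{<k}(v)$, every neighbour in $G$ of a vertex of $T^{AB}(v)$ is still in $N_G^{<k}(v)$, and in particular no vertex of $T^{AB}(v)$ has any neighbour in $N_G^{k}(v)$. Since $T^{AB}(v)$ is a full connected component of $G^{AB}$, all neighbours of $T^{AB}(v)$ outside itself lie in $C(G) \cap N_G^{<k}(v)$.

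The main step is to establish the two inclusions
\[
C(G) \cap N_G^{<k}(v) \subseteq C(v,k) \qquad \text{and} \qquad T^{AB}(v) \cap C(v,k) = \emptyset.
\]
For the first, the set $S_0 := C(G) \cap N_G^{<k}(v)$ itself witnesses the local-core property: each vertex that must satisfy the local condition is in $C(G) \cup B(G)$ and hence has at least four $C(G)$-neighbours, all of which sit inside $N_G^{\le k}(v) \subseteq S_0 \cup N_G^{k}(v)$. Maximality of $C(v,k)$ then gives $S_0 \subseteq C(v,k)$. For the second, suppose $U := T^{AB}(v) \cap C(v,k) \ne \emptyset$. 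By the metric step, each $u \in U$ has no neighbour in $N_G^{k}(v)$ and must therefore have at least four neighbours in $C(v,k)$, which split between $C(G)$-neighbours (all captured by $S_0$) and $U$-neighbours. I then plan to show that for some nonempty $U^\star \subseteq U$ the set $C(G) \cup U^\star$ satisfies the global strong-4-core property --- possibly after iteratively shrinking $U$ to handle any $A(G)$-vertex in $T^{AB}(v) \setminus U$ adjacent to $U$ --- contradicting the maximality of $C(G)$.

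Given both inclusions, $T^{AB}(v)$ is contained in $A(v,k) \cup B(v,k)$, is connected in $G$, and has all its $G$-outgoing edges terminating in $C(v,k)$; so $T^{AB}(v)$ is exactly the component of $v$ in $G[A(v,k) \cup B(v,k)]$, i.e., $T^{AB}(v,k) = T^{AB}(v)$. Moreover, on $T^{AB}(v)$ a vertex is labelled $B(v,k)$ iff it is adjacent to $C(v,k)$, which by the two inclusions coincides with adjacency to $C(G)$, i.e., being in $B(G)$; so the labellings agree as well. The delicate step, and the main obstacle, is the shrinking-and-extension argument in the second inclusion, where the ``free boundary'' $N_G^{k}(v)$ in the local definition could in principle let a vertex of $T^{AB}(v)$ sneak into $C(v,k)$; the argument rules this out precisely because $T^{AB}(v)$ has no neighbour in $N_G^{k}(v)$ at all, so any local-core support must come from within $N_G^{<k}(v)$ and can be propagated to contradict the global maximality of $C(G)$.
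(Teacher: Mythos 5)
Your proposal follows essentially the same route as the paper: both proofs reduce the claim to showing that $T^{AB}(v,k)$ and $T^{AB}(v)$ coincide as labelled components, via the two facts $C(G)\cap N_G^{<k}(v)\subseteq C(v,k)$ and $T^{AB}(v)\cap C(v,k)=\emptyset$. The paper phrases the second fact as ``$C(G)\cap T^{AB}(v)$ and $C(v,k)\cap T^{AB}(v)$ are both the maximal solution of the same local problem relative to $D=N_G(T^{AB}(v))$,'' while you phrase it as a contradiction with the maximality of $C(G)$; these are the same argument seen from two sides. The one step you leave as a ``plan'' does go through, and more simply than you fear: no iterative shrinking is needed, since $U^\star=U$ already works. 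Indeed, every vertex of $(C(G)\cup U)\cup N_G(C(G)\cup U)$ has at least four neighbours in $C(G)\cup U$: vertices of $C(G)\cup N_G(C(G))$ have four neighbours in $C(G)$ already; a vertex of $U$ has, by membership in $C(v,k)$ and the absence of neighbours of $T^{AB}(v)$ in $N_G^{k}(v)$, four neighbours in $C(v,k)$, and each such neighbour lies either in $T^{AB}(v)\cap C(v,k)=U$ or in $N_G(T^{AB}(v))\subseteq C(G)$; and any remaining vertex of $N_G(U)$ lies in $T^{AB}(v)\cap N_G(C(v,k))$, so the defining property of $C(v,k)$ gives it four neighbours in $C(v,k)$ as well, landing in the same set $\bigl(C(G)\cap N_G^{<k}(v)\bigr)\cup U$. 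This contradicts the maximality of $C(G)$ and yields $U=\emptyset$, after which your identification of the labelled components is correct.
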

\begin{proof}
Let $v\in V(G)$ be such that $|T^{AB}(v)|\leq k-1$. Set $D^*=C(G)\cap N_G^{k}(v)$. Then by definition of $C(G)$, the set $C(G)\cap N_G^{<k}(v)$ is the maximal subset $S$ of $N_G^{<k}(v)$ with the property that every vertex in $S\cup N_G(S)$ has at least $4$ neighbours in $S\cup D^*$. As $D^*\subseteq N_G^{k}(v)$, one has $C(G)\cap N_G^{<k}(v)\subseteq C(v,k)$. It follows that if $v\in C(G)$ then $v\in C(v,k)$ and $\phi(v)=0=\phi_k(v)$. 

Now assume that $v\notin C(G)$, equivalently that $T^{AB}(v)\neq \emptyset$ and let $D=N_G(T^{AB}(v))$. As $T^{AB}(v)$ is connected, contains $v$ and has size at most $k-1$ we have that $T^{AB}(v)\subseteq N_G^{\leq k-2}(v)$ and therefore $D\subseteq N_G^{< k}(v)$. In addition, as $D\subseteq C(G)$ we have that $D\subseteq C(v,k)$. By the definitions of $C(G),C(v,k)$ we have that both of the sets $C(G)\cap T^{AB}(v)$, $C(v,k)\cap T^{AB}(v)$ are the maximal subset $S$ of $T^{AB}(v)$ with the property that every vertex in $S\cup N_{G[T^{AB}(v)]}(S)$ has at least $4$ neighbours in $S\cup D$. Thus, we have equalities
\begin{eqnarray*}
& & T^{AB}(v)\cap C(G) = T^{AB}(v)\cap C(v,k),\\
& & N_G(D)\cap T^{AB}(v) = T^{AB}(v)\cap B(G)= T^{AB}(v)\cap B(v,k),\\
& & T^{AB}(v)\cap A(G)= T^{AB}(v)\cap A(v,k).
\end{eqnarray*}
It now follows that $\phi(v)=\phi_k(v)$.
\end{proof}

\begin{lemma}\label{lem:boundapprox}
For $k\geq 1$, $|\mathbb{E}(\mu'(G))-\mathbb{E}(\mu_k(G))|\leq (2ed)^{k}e^{-kd/4} \cdot n$.
\end{lemma}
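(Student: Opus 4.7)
The plan is to take expectations in \eqref{eq:localdiff}, obtaining
\[
|\mathbb{E}(\mu'(G)) - \mathbb{E}(\mu_k(G))| \le 1 + \sum_{v \in V(G)} \pr(\phi_k'(v) \neq \phi(v)) + \sum_{v \in V(G)} \pr(|N_G^{\leq k}(v)| > 2 d^k e^{kd}),
\]
and then to bound each of the two sums on the right by $O(n e^{-\Omega(kd)})$, which is well within the target $n e^{-0.1 kd}$ in the regime $d \geq 20$ (the additive ``$1$'' is absorbed whenever $ne^{-0.1kd}\gtrsim 1$).

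For the neighbourhood-size sum, a layer-by-layer BFS computation gives $\mathbb{E}|N_G^{\leq k}(v)| \leq \sum_{j=0}^{k} d^j \leq 2 d^k$ for $d \geq 2$, since each level has expected size at most $d$ times the previous. Markov's inequality then yields $\pr(|N_G^{\leq k}(v)| > 2 d^k e^{kd}) \leq e^{-kd}$, so summing over $v$ bounds this contribution by $n e^{-kd}$, comfortably inside the target.

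For the $\phi_k'$ sum, Lemma \ref{lem:phi'approx} reduces matters to bounding $\mathbb{E}[\#\{v : |T^{AB}(v)| \geq k\}]$. I would split according to $i := |T^{AB}(v) \cap A|$. In the main case $i \geq k$, Lemma \ref{lemma:4coreProperties}\ref{4core:sizeX_i} gives
\[
\sum_{i \geq k} \mathbb{E}(X_i) \leq n \sum_{i \geq k} \frac{(2d)^{4i}}{i \cdot e^{id}} \leq 2 n \cdot e^{-k(d - 4\log(2d))} \leq n e^{-0.1 kd},
\]
using $4 \log(2d) \leq 0.9 d$ for $d \geq 20$. For the remaining case $i < k$ and $|T^{AB}(v)| \geq k$, I would use that every vertex of $B(G)$ has an $A(G)$-neighbour in its $G^{AB}$-component (from the red/blue/black colouring description of the strong $4$-core), so the component size is controlled by the total $G$-degree of its $A$-vertices. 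A direct first-moment count of $k$-vertex trees in $G^{AB}$ rooted at $v$ gives an upper bound of order $(ed)^{k-1} \cdot (\pr(v \in A \cup B))^{k}$, and combining with $\pr(v \in A \cup B) = O(d^{C} e^{-d})$ (obtained from Lemma \ref{lemma:4coreProperties}\ref{4core:sizeX_i} applied at small $i$) yields a contribution of at most $n e^{-\Omega(kd)}$, again within the target.

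The principal technical obstacle is the second case above: the tail sum of $X_i$ alone does not rule out $G^{AB}$-components of size $\geq k$ that sit on very few $A$-vertices but many $B$-vertices, so one must supplement Lemma \ref{lemma:4coreProperties}\ref{4core:sizeX_i} with either a tree-enumeration argument or a degree-concentration bound to absorb these atypically $B$-heavy configurations. Everything else is routine arithmetic on the constants.
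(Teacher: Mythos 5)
Your overall route is the same as the paper's: take expectations in \eqref{eq:localdiff}, dispose of the neighbourhood-size term by Markov's inequality exactly as you do, and control the $\phi_k'$ term via Lemma \ref{lem:phi'approx} combined with the tail of Lemma \ref{lemma:4coreProperties}\ref{4core:sizeX_i}. The one place you diverge is the case you call the ``principal technical obstacle'', and there your proposed fix is the weak link. The worry about components of $G^{AB}$ of size at least $k$ that sit on few $A$-vertices is vacuous: in the red/blue/black colouring a vertex is recoloured red only when it has fewer than $4$ black neighbours, so each recolouring creates at most $3$ blue vertices, whence every component $T$ of $G^{AB}$ satisfies $|T\cap B(G)|\le 3\,|T\cap A(G)|$ and hence $|T|\le 4\,|T\cap A(G)|$ (this is exactly the opening observation in the appendix proof of Lemma \ref{lemma:4coreProperties}\ref{4core:sizeX_i}). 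Thus $|T^{AB}(v)|\ge k$ already forces $|T^{AB}(v)\cap A(G)|\ge k/4$, and no tree enumeration or degree-concentration supplement is needed. By contrast, the bound you sketch for that residual case, of order $(ed)^{k-1}\cdot\left(\pr(v\in A\cup B)\right)^{k}$, is not justified as stated: the events $\{u\in A(G)\cup B(G)\}$ for the $k$ vertices of a prospective component are strongly dependent (membership of one vertex of a component essentially determines that of the others), so the $k$-th power cannot be multiplied in. One bookkeeping remark: the correct reduction replaces your tail $\sum_{i\ge k}$ by $\sum_{i\ge k/4}$, which divides the exponent by $4$; the resulting bound is roughly $n\exp\left(-(d-4\log(2d))k/4\right)$, and the constant $0.1$ should be rechecked near $d=20$ (the paper itself is cavalier here, summing from $i\ge k$), although any bound of the form $ne^{-\Omega(kd)}$ is what the later applications actually use.
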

\begin{proof}
Note that for $k\in \mathbb{N}^+$ and $v\in [n]$ we have $\mathbb{E}(|N_G^k(v)|)\leq d^k$. Therefore, $\mathbb{E}(|N_G^{\leq k}(v)|)\leq 2d^k$, and by Markov's inequality we get
$$\pr \left( |N_G^{\leq k}(v)|)\geq 2d^ke^{dk} \right) \le e^{-dk}.$$
In extension,
$$\mathbb{E}(|\{v\in V(G):|N_G^{\leq k}(v)|>2d^ke^{kd}\}|)\leq e^{-dk} n.$$
To bound the expected number of $|\{v\in V(G): \phi_k'(v)\neq \phi(v)\}|$ recall that Lemma \ref{lem:phi'approx} implies that if $\phi_k'(v)\neq \phi(v)$ then $|T^{AB}(v)|\geq k$. In addition, $T^{AB}(v)$ has at least  $|T^{AB}(v)|/4$ vertices in $A(G)$. Thus, Lemma  \ref{lemma:4coreProperties} \ref{4core:sizeX_i} implies that 
$$\mathbb{E}(|\{v\in V(G): \phi_k'(v)\neq \phi(v)\}|)\leq  \sum_{i\geq k}
\frac{(2ed)^{i}e^{-(i/4)d}}{15(i/4)d}\cdot n
\leq 4\cdot \frac{(2ed)^{k}e^{-kd/4}}{15kd/4}\cdot n.$$
 \eqref{eq:localdiff}, and the above imply that
$$|\mathbb{E}(\mu'(G))-\mathbb{E}(\mu_k(G))|\leq 1+ \sum_{i\geq k} 4\cdot \frac{(2ed)^{k}e^{-kd/4}}{15kd/4}\cdot n +e^{-dk}\cdot n\leq (2ed)^{k}e^{-kd/4}\cdot n.$$
\end{proof}

\subsection{The scaling limits of the approximations}\label{sec:limits}
For $\ell,k\geq 1$ we let $\mathcal{H}_{k,\ell}$ be the set of 
pairs $(H,o_H)$ where $H$ is a connected graph, $o_H$ is a distinguished vertex of $H$, that is considered to be the root, every vertex in $V(H)$ is within distance at most $k$ from $o_H$, there are at most $2d^ke^{dk}+1$ vertices in $H$ and $\ell$ of the vertices in $H$ are at distance at most $k-1$ from $o_H$. For $(H,o_H) \in \mathcal{H}_{k,\ell}$ let $X_{(H,o_H)}(G)$ be the number of copies of $(H,o_H)$ in $G$. Also let $\phi(H,o_H)$ be equal to the value of $\phi_k(v)$ in the event $(G[N^{\leq k}(v)],v)=(H,o_H)$.
Then,
\begin{align*}
   \mu_k(G)&=\frac{1}{2}\cdot \sum_{v \in V(G)} \phi_k(v)
    =\frac{1}{2}\cdot  \sum_{v \in V(G)}  \left(
    \sum_{\ell \geq 1} \sum_{(H,o_H) \in \mathcal{H}_{k,\ell }} \phi(H,o_H) \mathbb{I}\left((G[N^{\leq k}(v)],v)=(H,o_H)\right) \right)
    \\  & =\frac{1}{2}  \cdot
    \sum_{\ell \geq 1} \sum_{(H,o_H) \in \mathcal{H}_{k,\ell }} \phi(H,o_H) \left(  \sum_{v \in V(G)} \mathbb{I}\left((G[N^{\leq k}(v)],v)=(H,o_H)\right) \right)
 \\&   =\frac{1}{2}\cdot \sum_{\ell \geq 1} \sum_{(H,o_H) \in \mathcal{H}_{k,\ell }} \phi(H,o_H) X_{(H,o_H)}(G).
\end{align*} 
For $k\geq 1$ we let 
$$\rho_{d,k}= \sum_{\ell \geq 1} \sum_{\substack{(H,o_H) \in \mathcal{H}_{k,\ell }:\\H \text{ is a tree }}}  \frac{\phi(H,o_H) \cdot d^{|V(H)|-1} \cdot e^{-d\ell}}{2\cdot aut(H,o_H)}.$$
Here by $aut(H,o_H)$ we denote the number of automorphisms of $H$ that map $o_H$ to $o_H$. Then, 
\begin{align}\label{def:prerho}
 \mathbb{E}\bigg( \frac{\mu_k(G)}{n} \bigg) &=\sum_{\ell\geq 1} \sum_{(H,o_H) \in \mathcal{H}_{k,\ell}} \frac{\phi(H,o_H) \mathbb{E}(X_{(H,o_H)}(G))}{2n} \nonumber
\\&= \sum_{\ell\geq 1}\sum_{(H,o_H) \in \mathcal{H}_{k,\ell}} \frac{ \phi(H,o_H)  \cdot\binom{n}{|V(H)|} \cdot|V(H)|!\cdot p^{|E(H)|} \cdot(1-p)^{\ell\cdot (n-|V(H)|)+ \binom{|V(H)|}{2}-|E(H)|}}{2\cdot aut(H,o_H) \cdot n} \nonumber
\\&= \sum_{\ell\geq 1} \sum_{(H,o_H) \in \mathcal{H}_{k,\ell}} \frac{ \phi(H,o_H)  \cdot n^{|V(H)|}\cdot p^{|E(H)|}\cdot e^{-d \ell }}{2\cdot aut(H,o_H) \cdot n}+O(n^{-0.5}).
\end{align}
At the last equality we used that $\mathbb{E} \bfrac{\mu_k(G)}{n}\in [0,1]$, and that 
\begin{align*}
&\binom{n}{|V(H)|} \cdot|V(H)|! \cdot (1-p)^{\ell\cdot (n-|V(H)|)+ \binom{|V(H)|}{2}-|E(H)|}= \prod_{i=0}^{|V(H)|-1}\left(1-\frac{i}{n}\right)\cdot n^{|V(H)|} \cdot e^{-(p+O(p^2))(\ell n +O(1))}
\\&   =\left(1-O\bfrac{1}{n}\right)\cdot n^{|V(H)|} \cdot e^{-np\ell+O(np^2+p)}=n^{|V(H)|} \cdot e^{-d\ell} \cdot \left(1+O(n^{-0.5})\right).
\end{align*}
For $(H,o_H)\in \mathcal{H}_{k,\ell}$ that is not a tree, we have that,
\begin{equation}\label{eq:nontrees}
 \frac{ \phi(H,o_H) \cdot n^{|V(H)|}\cdot p^{|E(H)|} \cdot e^{-d\ell}}{2\cdot aut(H,o_H) \cdot n}\leq \frac{n^{|V(H)|}\cdot p^{|E(H)|}}{n}\leq \frac{(np)^{|V(H)|}}{n} = \frac{d^{|V(H)|}}{n}
=O(n^{-0.5}).    
\end{equation}
At the first inequality above we used that $\phi(H,o_H)\in[0,1]$  and at the equality we used that $|V(H)|=O(1)$, and $d=O(\log n)$. 

Finally, as the sum in \eqref{def:prerho} is taken over finitely many pairs $(H,o_H)$ (hence it involves only finitely many pairs where $H$ is not a tree), \eqref{def:prerho} and \eqref{eq:nontrees} give that,
\begin{align}\label{def:rho}
 \mathbb{E}\bigg( \frac{\mu_k(G)}{n} \bigg) =  \sum_{\ell\geq 1} \sum_{\substack{(H,o_H) \in \mathcal{H}_{k,\ell}:\\H \text{ is a tree }}} \frac{ \phi(H,o_H)  \cdot n^{|V(H)|}\cdot p^{|V(H)-1|}e^{-d \ell }}{2\cdot aut(H,o_H) \cdot n}+O(n^{-0.5}) =\rho_{d,k}+O(n^{-0.5}). 
\end{align}

\subsection{The scaling limit of  \texorpdfstring{$\mu(G)$}{Lg}}  \label{subsec:scalinglimit}

We now define the function $f:[0,\infty)\to [0,1]$ by 
\begin{align*}
    f(d)=\begin{cases}
         \rho_{d,1}+\sum_{i\geq 1}(\rho_{d,i+1}-\rho_{d,i}) &\text{ for }d\geq 20;
     \\ \rho_{20,1}+\sum_{i\geq 1}(\rho_{20,i+1}-\rho_{20,i}) &\text{ for }0\leq d<20.
    \end{cases}
\end{align*}
\begin{lemma}\label{lem:thm1:part2}
 With high probability, for all $k\geq 1$
\begin{equation}\label{eq:lem:thm1:part2}
|\mu(G)-f(d)\cdot n|\leq 3 n\cdot 0.8^k +O(n^{0.5}).
\end{equation} 
\end{lemma}
In the proof of Lemma \ref{lem:thm1:part2} we use Lemma \ref{lem:muconcentration}. Its proof is located in Appendix \ref{app:sec:lemma:4coreProperties}.
\begin{lemma}\label{lem:muconcentration}
 $ |\mu'(G)-\mathbb{E}(\mu'(G))|\leq n^{0.51} $
with high probability.   
\end{lemma}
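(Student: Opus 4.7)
The plan is to apply the vertex-exposure martingale concentration of Lemma~\ref{lemma:martingales} to the function $\mu(G)$ and then transfer the resulting concentration to $\mu'(G)$ via Observation~\ref{obs:mu} together with Lemma~\ref{lemma:mu}\ref{lemma:mu-hamiltonian}. The starting point is the elementary Lipschitz property $|\mu(G)-\mu(G\triangle\{e\})|\le 1$ for every edge $e$: given a set $F$ of size $\mu(G\triangle\{e\})$ whose addition to $G\triangle\{e\}$ yields a Hamiltonian graph, the set $F\triangle\{e\}$ has size at most $|F|+1$ and makes $G$ Hamiltonian. Iterating one edge at a time gives $|\mu(G_1)-\mu(G_2)|\le |E(G_1)\triangle E(G_2)|$ for any two graphs $G_1,G_2$ on the same vertex set. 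Note that the analogous statement is less clear for $\mu'$ because the strong $4$-core is not monotone under edge additions, which motivates routing the argument through $\mu$.

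\textbf{Azuma applied to $\mu$.} Set $\Delta_0\coloneqq \lceil 10d\rceil$, which is $O(\log n)$ throughout the regime $20\le d\le 0.4\log n$, and let $\mathcal{P}$ be the event $\{\Delta(G)\le \Delta_0\}$. Standard Chernoff bounds for $\mathrm{Bin}(n-1,d/n)$ and a union bound give $\pr(\mathcal{P}^c)=o(n^{-3})$. If $G_1,G_2$ are graphs on $V(G)$ differing only at edges incident to a single vertex $v$ and both lie in $\mathcal{P}$, then $|E(G_1)\triangle E(G_2)|\le 2\Delta_0$, so the iterated Lipschitz bound gives $|\mu(G_1)-\mu(G_2)|\le 2\Delta_0$. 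The trivial bound $N=n$ is always valid since $0\le \mu(G)\le n$. Applying Lemma~\ref{lemma:martingales} with these parameters and $t=n^{0.51}$ yields
\[
\pr\bigl(|\mu(G)-\mathbb{E}\mu(G)|\ge n^{0.51}\bigr)\le 2\exp\!\left(-\frac{n^{0.02}}{2(2\Delta_0+1)^2}\right)+n\cdot n\cdot \pr(\mathcal{P}^c)=o(1),
\]
where the first term is $2\exp(-n^{0.02}/O(\log^2 n))=o(1)$ and the second is $o(n^{-1})$.

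\textbf{Transfer to $\mu'$.} By Observation~\ref{obs:mu}, $\mu'(G)\le \mu(G)$ always, and by Lemma~\ref{lemma:mu}\ref{lemma:mu-hamiltonian}, $\mu(G)=\mu'(G)$ with high probability. The proof of the latter relies only on the $O(n^{-2})$ event of Lemma~\ref{lemma:4coreHmty} and on the structural events of Lemmas~\ref{lemma:4coreProperties} and \ref{lem:4coreProperties2}, all of which admit polynomial tail bounds (the underlying estimates apply Markov's inequality to moments of subgraph counts), and hence $\pr(\mu\ne\mu')=o(n^{-1/2})$. Consequently $|\mathbb{E}\mu-\mathbb{E}\mu'|=\mathbb{E}(\mu-\mu')\le n\cdot \pr(\mu\ne \mu')=o(n^{1/2})$, and
\[
|\mu'(G)-\mathbb{E}\mu'(G)|\le |\mu'(G)-\mu(G)|+|\mu(G)-\mathbb{E}\mu(G)|+|\mathbb{E}\mu(G)-\mathbb{E}\mu'(G)|\le n^{0.51}
\]
on the intersection of the relevant high-probability events.

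\textbf{Main obstacle.} The one delicate step is verifying the polynomial tail $\pr(\mu\ne\mu')=o(n^{-1/2})$: if the ``w.h.p.\@'' assertions underlying Lemmas~\ref{lemma:4coreProperties} and \ref{lem:4coreProperties2} yielded only $\pr(\text{bad})=o(1)$, then $\mathbb{E}(\mu-\mu')$ could be as large as $o(n)$, destroying the transfer step. Should that be an issue, the alternative is to establish a direct Lipschitz bound for $\mu'$ under single-vertex edge modifications, which requires a stability analysis showing that on a high-probability structural event the $4$-core decomposition responds to such perturbations only within a bounded ``sphere of influence'' (essentially the union of the $G^{AB}$-components meeting $N(v)$); this is more intricate but tractable using the same structural events.
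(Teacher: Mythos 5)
Your route is genuinely different from the paper's. The paper applies Lemma \ref{lemma:martingales} \emph{directly} to $\mu'$: on the event $\cP_n$ that no component of $G^{AB}$ has more than $\log^2 n$ vertices, the stability property \eqref{eq:corestability} shows that changing the edges at a single vertex $v$ alters the decomposition $A(G),B(G),C(G)$ only inside the union of the two components containing $v$ (at most $2\log^2 n$ vertices), whence $|\mu'(G_1)-\mu'(G_2)|\le 4\log^2 n$ --- exactly the ``sphere of influence'' analysis you defer to as a fallback, and which the paper has already set up for Lemma \ref{lemma:4coreProperties}\ref{4core:sizeofAandS}. You instead exploit the clean $1$-Lipschitz property of $\mu$ under edge toggles and transfer to $\mu'$ via a polynomial bound on $\pr(\mu\neq\mu')$. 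That transfer is legitimate in the relevant regime $20\le d\le 0.4\log n$: the events feeding the proof of Lemma \ref{lemma:mu}\ref{lemma:mu-hamiltonian} are established there with failure probability $O(n^{-2})$ (the martingale/Chebyshev dichotomy in the proofs of Lemmas \ref{lemma:4coreProperties} and \ref{lem:4coreProperties2} resolves to the $o(n^{-2})$ branch once $d\le 0.45\log n$), so $\mathbb{E}(\mu-\mu')\le n\cdot O(n^{-2})=o(1)$, comfortably better than the $o(n^{1/2})$ you need. Your approach buys a trivial Lipschitz verification at the price of auditing tail probabilities across several auxiliary lemmas; the paper's is self-contained given the core-stability machinery it needs anyway.

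One step as written would fail: the choice $\Delta_0=\lceil 10d\rceil$ together with the claim $\pr(\Delta(G)>\Delta_0)=o(n^{-3})$. For constant $d$ (e.g.\ $d=20$) the maximum degree of $G(n,d/n)$ is $\Theta(\log n/\log\log n)$ with high probability, so $\pr(\Delta(G)>200)\to 1$ and the additive term $n\cdot N\cdot\pr(G\notin\mathcal{P})=n^2\pr(\mathcal{P}^c)$ in Lemma \ref{lemma:martingales} is useless; the concentration step for $\mu$ is then vacuous in the sparse part of the range. The repair is immediate: take $\Delta_0=\log^2 n$, for which a Chernoff bound gives $\pr(\Delta(G)>\Delta_0)=o(n^{-3})$ throughout $20\le d\le 0.4\log n$, while the exponential term remains $2\exp\left(-n^{0.02}/O(\log^4 n)\right)=o(1)$. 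With that correction, and with the tail audit in the transfer step carried out explicitly, your argument goes through.
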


\begin{proof}[Proof of Lemma \ref{lem:thm1:part2}]
Lemma \ref{lem:boundapprox} and \eqref{def:rho} imply that for $1\leq k_1<k_2$,
\begin{align*}
n|\rho_{d,k_1}-\rho_{d,k_2}| &\leq |n\rho_{d,k_1}-\mathbb{E}(\mu_{k_1}(G))|+|\mathbb{E}(\mu_{k_1}(G))-\mathbb{E}(\mu'(G))|\nonumber
\\&+|\mathbb{E}(\mu'(G))-\mathbb{E}(\mu_{k_2}(G))|+|\mathbb{E}(\mu_{k_2}(G))-\rho_{d,k_2}|\nonumber
\\&\leq  O(n^{0.5})+ ((2ed)^{k_1}e^{-k_1d/4} \cdot n+ O(n^{0.5}))+((2ed)^{k_2}e^{-k_2d/4} \cdot n+O(n^{0.5}))+O(n^{0.5})
\\&\leq 2\cdot (2ed)^{k_1}e^{-k_1d/4} \cdot n+O(n^{0.5}).    
\end{align*}
Thus, for $1\leq k_1<k_2$,
\begin{align}\label{eq:exp_to_rho}
|\rho_{d,k_1}-\rho_{d,k_2}| & \leq  2\cdot (2ed)^{k_1}e^{-k_1d/4}.    
\end{align}

It follows that the sum in the definition of $f(d)$, and in extension $f(d)$, is well defined for all $d\geq 0$.
In addition, as $\rho_{d,k}$ is continuous in $d$, we have that $f$ is continuous. Also note that for $k\geq 1$ and $d\geq 20$,
$$ f(d)= \rho_{d,1}+\sum_{i\geq 1}(\rho_{d,i+1}-\rho_{d,i})= \rho_{d,k}+\sum_{i\geq k+1}(\rho_{d,i+1}-\rho_{d,i}).$$
Thus, we have that  for all $k\geq 1$,
\begin{align}\label{eq:approxnubyfd}
    | \mathbb{E}(\mu'(G))-f(d) \cdot n|
   &=\bigg | \mathbb{E}(\mu'(G))-n\cdot \rho_{d,k}-n\cdot \sum_{i\geq k+1}(\rho_{d,i+1}-\rho_{d,i})\bigg| \nonumber
    \\    &\leq |\mathbb{E}(\mu'(G))- \mathbb{E}(\mu_k(G))|+|\mathbb{E}(\mu_k(G))- n\cdot \rho_{d,k}|+n\cdot \bigg|\sum_{i\geq k}(\rho_{d,i+1}-\rho_{d,i}) \bigg| \nonumber
    \\&\leq  n\cdot  (2ed)^{k}e^{-kd/4} +O(n^{0.5}) + n\cdot \sum_{i\geq k}  2\cdot (2ed)^{k}e^{-kd/4} \nonumber
    \\& \leq 3n\cdot (2ed)^{k}e^{-kd/4}  +O(n^{0.5}).
\end{align} 
At the penultimate inequality we used Lemma \ref{lem:boundapprox}, \eqref{def:rho}, and \eqref{eq:exp_to_rho}. \eqref{eq:approxnubyfd} and Lemma \ref{lem:muconcentration} imply that, for all $k\geq 1$, with high probability
$$|\mu'(G)-f(d)\cdot n|\leq  3n\cdot (2ed)^{k}e^{-kd/4}  +O(n^{0.5}). \leq 3n\cdot 0.8^k+O(n^{0.5}).$$
\eqref{eq:lem:thm1:part2} follows from the inequality above and Corollary \ref{corol:mu}.
\end{proof}

\subsection{Approximating the scaling limit of \texorpdfstring{$\mu(G)$}{Lg}}\label{subsec:approx}

We now proceed with approximating $f(d)$ up to accuracy $O\left( d^{15}e^{-4d} \right)$. In order to approximate $f(d)$ we use Lemma \ref{lemma:proof3comb}, which is stated shortly, and proved at the end of this section. For a graph $G$ and $T$ a connected component in $G^{AB}$ denote $A(T) \coloneqq A(G)\cap V(T)$ and $B(T)\coloneqq B(G)\cap V(T)$, and denote by $n_i(T)$ the number of vertices in $A(T)$ that have degree $i$.

A $3$-prespider of a graph $F$ is a tree-subgraph of $F$ whose edge set consists of the edges incident to $3$ vertices of degree at most $2$ in $F$ with a common neighbour. There are 4 non-isomorphic $3$-prespiders (see Figure \ref{fig:prespiders}). 

We also let  $s_3'(G)$ and $s_3'(T)$ be the number of 3-prespiders spanned by $G$, $T$ respectively. Finally, let $a(T)$ be the number of endpoints in $A(T)$ of paths in a cover $Q$ with $a(Q) =a(G)$ (observe that $a(T)$ does not depend on the choice of $Q$, as long as $a(Q)=a(G)$).

\begin{figure}[h]
\centering
\begin{tikzpicture}[vertex/.style={draw,circle,color=black,fill=black,inner sep=1,minimum width=4pt},scale=1]

	\coordinate (l1) at (-1.8,0.15) {};
	\coordinate (l2) at (-1.3,0.15) {};
	\coordinate (l3) at (7,0.15) {};
	\coordinate (l4) at (7.5,0.15) {};
	\draw[dashed] (l1) to (l2);
	\draw[] (l2) to (l3);
	\draw[dashed] (l3) to (l4);
	\node at (-2.2,0.5) {$A(G)$};

	\node[vertex] (b11) at (-0.5,-0.3) {};
	\node[vertex] (a11) at (-0.5,0.6) {};
	\node[vertex] (a12) at (0,0.6) {};
	\node[vertex] (a13) at (-1,0.6) {};
	\draw[thick] (b11) to (a11);
	\draw[thick] (b11) to (a12);
	\draw[thick] (b11) to (a13);
	\node at (-0.5,-0.9) {$\frac{d^3}{3!}e^{-3d}n$};

	\node[vertex] (b21) at (1.5,-0.3) {};
	\node[vertex] (b22) at (0.7,-0.3) {};
	\node[vertex] (a21) at (1.5,0.6) {};
	\node[vertex] (a22) at (2,0.6) {};
	\node[vertex] (a23) at (1,0.6) {};
	\draw[thick] (b21) to (a21);
	\draw[thick] (b21) to (a22);
	\draw[thick] (b21) to (a23);
	\draw[thick] (b22) to (a23);
	\node at (1.5,-0.9) {$\frac{3d^4}{3!}e^{-3d}n$};

	\node[vertex] (b31) at (3.5,-0.3) {};
	\node[vertex] (b32) at (2.7,-0.3) {};
	\node[vertex] (b33) at (4.3,-0.3) {};
	\node[vertex] (a31) at (3.5,0.6) {};
	\node[vertex] (a32) at (4,0.6) {};
	\node[vertex] (a33) at (3,0.6) {};
	\draw[thick] (b31) to (a31);
	\draw[thick] (b31) to (a32);
	\draw[thick] (b31) to (a33);
	\draw[thick] (b32) to (a33);
	\draw[thick] (b33) to (a32);
	\node at (3.5,-0.9) {$\frac{3d^5}{3!}e^{-3d}n$};

	\node[vertex] (b41) at (5.5,-0.3) {};
	\node[vertex] (b42) at (4.7,-0.3) {};
	\node[vertex] (b43) at (6.7,-0.3) {};
	\node[vertex] (b44) at (6.1,-0.3) {};
	\node[vertex] (a41) at (5.5,0.6) {};
	\node[vertex] (a42) at (6,0.6) {};
	\node[vertex] (a43) at (5,0.6) {};
	\draw[thick] (b41) to (a41);
	\draw[thick] (b41) to (a42);
	\draw[thick] (b41) to (a43);
	\draw[thick] (b42) to (a43);
	\draw[thick] (b43) to (a42);
	\draw[thick] (b44) to (a41);
	\node at (5.5,-0.9) {$\frac{d^6}{3!}e^{-3d}n$};

\end{tikzpicture}
\caption{\textit{The $4$ non-isomorphic $3$-prespiders and the respective leading terms of the expected number of their appearances. Here, the vertices above the line belong to $A(G)$, and have no neighbours in $G$ outside the $3$-prespider, while the vertices below the line may belong to either $B(G)$ or $A(G)$ and may have more neighbours.}}\label{fig:prespiders}
\end{figure}

\begin{lemma} \label{lemma:proof3comb}
If $T$ is a tree connected component $G^{AB}$, with $|A(T)|\leq 3$, then $a (T) = 2n_0(T) + n_1(T)+s_3'(T)$.
\end{lemma}

The main lemma of this subsection is the following.
\begin{lemma}\label{lem:thm1:part3}
Let $d=d(n)$ be such that $20\leq d\leq 0.4\log n$. Then, 
$$f(d) = \frac{1}{2}de^{-d} + e^{-d}+\left(\frac{1}{12}d^6 + \frac{1}{4}d^5 +\frac{1}{4}d^4 +\frac{1}{12}d^3 \right) \cdot e^{-3d} + O_d\left( d^{15}e^{-4d} \right).$$ 
\end{lemma}
\begin{proof}
    
Let $\cT'\subseteq \mathcal{T}\left( G^{AB} \right)$ be the set of tree components  $T$ of $G^{AB}$ with at most $3$ vertices in $A$. Let $\cP$ be a set of vertex disjoint paths that cover $G^{AB}$ such that (i) each   component $T\in \cT'$ is covered by paths with $a(T)$ endpoints in $A(T)$ in total and
(ii) the rest of the components are covered by paths arbitrarily. Let $a(\cP)$ be the total number of endpoints of paths in $\cP$ that belong to $A(G)$. By the definition of $\mu'(G)$ we have that 
$2\cdot \mathbb{E}(\mu'(G))\leq \mathbb{E}(a(\cP))+1$. In addition note that for every component $C$ of $G^{AB}$ with $C\cap A(G)=i$, the number of endpoints of paths in $\cP$ that belong to $C\cap A(G)$ is at most $2i$. Thus, 
\begin{align}\label{eq:mubound1}
\mathbb{E}(2\mu'(G))&- \mathbb{E}(2n_0(G)+n_1(G)+s_3'(G))  \leq \mathbb{E}(a(\cP))+1- \mathbb{E}\left(\sum_{T\in \cT':|A(T)|\leq 3} 2n_0(T)+n_1(T)+s_3'(T)\right) \nonumber
\\&\leq 1+ 30\cdot d^{12}+\sum_{i\geq 4}\mathbb{E}(2i\cdot Y_i)
\leq 1+ 30\cdot d^{12}+\sum_{i\geq 4}i^3\mathbb{E}(Y_i),
\end{align}

where $Y_i$ is the number of components of $G^{AB}$ with $i$ vertices in $A$. The $30d^{12}$ term in \eqref{eq:mubound1} is a crude upper bound on the expected number of endpoints of paths in $\cP$ that are spanned by cyclic components of $G^{AB}$ with at most $3$ vertices in $A(G)$, hence with at most $12$ vertices in total. For the last inequality  in \eqref{eq:mubound1} we used that Lemma \ref{lemma:proof3comb} implies that 
$\mathbb{E}\left(\sum_{T\in \cT':|A(T)|\leq 3} 2n_0(T)+n_1(T)+s_3'(T)\right)
= \mathbb{E}\left(\sum_{T\in \cT':|A(T)|\leq 3}a(T)\right)$.

On the other hand, Lemma \ref{lemma:proof3comb} implies that $2\mu'(G)- (2n_0(G)+n_1(G)+s_3'(G))$ is lower bounded by minus the sum of 
$2n_0(T)+n_1(T)+s_3'(T)$ over components $T\in \mathcal{T}\left( G^{AB} \right)$ such that either $|A(T)|\geq 4$ or $A(T)\leq 3$ (hence $T$ spans at most 12 vertices) and $T$ spans a cycle. $X_i$ is the number of components $T$ with $|A(T)|=i$. For $i\geq 4$ each such component spans at most $i$ vertices of degree $1$, $0$ vertices of degree $0$ and at most $\binom{i}{3}$ $3$-prespiders (each $3$-prespider is uniquely determined by the $3$ vertices with the common neighbour that it spans, each such vertex belongs to $A(G)$). Thus,
\begin{equation}\label{eq:mubound2}
    \mathbb{E}(2\mu'(G))- \mathbb{E}(2n_0(G)+n_1(G)+s_3'(G)) \geq -
\mathbb{E}\bigg(\sum_{i\geq 4}\bigg(i+\binom{i}{3}\bigg)\cdot Y_i\bigg)-30d^{12} \geq -1-30d^{12}-\sum_{i\geq 4}i^3\mathbb{E}(Y_i).
\end{equation}
Lemma  \ref{lemma:4coreProperties} \ref{4core:sizeX_i} implies that $\mathbb{E}(Y_i) \le \max \left\{ \frac{(2ed)^{4i}e^{-id}}{15id}\cdot n, n^{-3}\right\}$ for every $1\le i\le n$. Thus, \eqref{eq:mubound1} and \eqref{eq:mubound2} give that 
\begin{align*}
|2\mathbb{E}(\mu'(G))- \mathbb{E}(2n_0(G)+n_1(G)+s_3'(G))|
&\leq \sum_{i\geq 4}i^3\mathbb{E}(Y_i) +1+ 30d^{12} 
\leq \sum_{i\geq 4} i^3 \frac{(2ed)^{4i}e^{-id}}{15id} n+1+30d^{12}
\\&\leq \frac{16 (2e)^4d^{15}e^{-4d}}{15} \cdot n\cdot  
 \sum_{i\geq 4} \frac{i^2}{16} \left((2ed)^4 e^{-d}\right)^{i-4}+1+30d^{12}
\\&=O_d(d^{15}e^{-4d})\cdot n.
\end{align*}

A $3$-prespider corresponds to a connected subgraph of a $3$-spider that spans a vertex of degree $3$, thus there are exactly $4$ non-isomorphic $3$-prespiders (see Figure \ref{fig:prespiders}). By calculating the expected number of appearances of each such 3-prespider in $G(n,p)$, it follows that 
$$\mathbb{E}(2n_0(G)+n_1(G)+s_3'(G))= \bigg(2e^{-d}+de^{-d}+ \bigg(\frac{d^3}{3!}+\frac{3d^4}{3!}+\frac{3d^5}{3!}+\frac{d^6}{3!} \bigg) e^{-3d}\bigg)n+ O(n^{0.5}).$$
The above and \eqref{eq:approxnubyfd} (taking $k=20$) imply that 
$$f(d)=e^{-d}+\frac{1}{2}de^{-d}+ \bigg(\frac{d^3}{12}+\frac{d^4}{4}+\frac{d^5}{4}+\frac{d^6}{12} \bigg) e^{-3d}+O_d(d^{15}e^{-4d}).$$
\end{proof}

\begin{proof}[Proof of Lemma  \ref{lemma:proof3comb}] 
First, assume that $|A(T)|=1$, and let $v$ be the unique vertex in $A(T)$. Then taking $\min \{ d(v),2 \}$ arbitrary edges adjacent to $v$, and covering all uncovered vertices with paths of length 0, results in a disjoint path cover in which $v$ is an endpoint if and only if $d(v)<2$. Since $A(T)=\{v\}$, this means that in particular $a (T) = 2n_0(T) + n_1(T)= 2n_0(T) + n_1(T)+s_3'(T)$.

Now assume that $|A(T)|=2$ and let $A(T) = \{u,v\}$. Then there exists a $u$-$v$ path $P$ spanned by $T$. For $w\in \{u,v\}$, if $w$ has degree at least $2$ then extend $P$ by adding to it a second edge adjacent to $w$. Since $T$ is acyclic this gives a path $P'$ with $n_1(T)$ endpoints in $A$. As $n_1(T)$ is a lower bound on $a (T)$ we have that $a(T) = n_1(T)= 2n_0(T) + n_1(T)+s_3'(T)$.

Finally, consider the case $|A(T)|=3$ and let $A(T)=\{w_1,w_2,w_3\}$. Let $P_1$ be a $w_1$-$w_2$ path spanned by $T$ and $P_2$ be a shortest path from $w_3$ to a vertex $w\in V(P_1)$. As $T$ is acyclic $P_2$ is well defined. If $w\in \{w_1,w_2\}$ then concatenating the two paths gives a path $P'$ that covers $A(T)$ and has endpoints in $A(T)$ (see Figure \ref{fig:lem5.4} case (a)). Similar to the case $|A(T)|=2$ this implies that  $a(T) = n_1(T)= 2n_0(T) + n_1(T)+s_3(T)$. Otherwise, $w\notin \{w_1,w_2\}$ and there exists a path $R_i$ from $w_i$ to $w$ spanned by  $T$ for $i=1,2,3$. First, assume that there exists $u\in A(T)$ such either $u$ is not adjacent to $w$ or $d_T(u)\geq 3$. Then $u$ has at least $\min\{d(u),2\}$ neighbours in $V(T)\setminus (A(T)\cup \{w\})$. Without loss of generality assume that $u=w_1$. Then by starting with the paths $R=w_2R_2wR_3w_3$ and $w_1$, augmenting $R$ as in the case $|A(T)|=2$ and connecting $w_1$ to $\min\{d(u),2\}$ neighbours of it not in $V(T)\setminus (A(T)\cup \{w\})$ once again we have that $a(T) = n_1(T)= 2n_0(T) + n_1(T)+s_3'(T)$ (see Figure \ref{fig:lem5.4} case (b)).

This leaves the case where 
 $A(T)=\{w_1,w_2,w_3\}$, the vertices $w_1,w_2,w_3$ have degree at most $2$ and a common neighbour $w$, which means that $T$ spans a $3$-prespider of $G$. In any disjoint path covering $\cP$ of $T$ the vertex $w$ is covered by a single path and therefore at least $1$ of the edges $ww_1,ww_2,ww_3$ is not spanned by some path. Therefore $\cP$ spans at most $d(w_1)+d(w_2)+d(w_3)-1$ edges incident to $A(T)$. As $T$ is acyclic and  $w_1,w_2,w_3$ are adjacent to $w$ we have that $A(T)$ does not span an edge of $T$. Thus, $\cP$ has at least $6-(d(w_1)+d(w_2)+d(w_3)-1)=n_1(T)+1$ endpoints in $A(T)$. On the other hand, by greedily extending the paths $w_1\, w\, w_2$ and $w_3$ into vertex disjoint paths we may construct a disjoint path covering of $T$ with $6-(d(w_1)+d(w_2)+d(w_3)-1)=n_1(T)+1$ endpoints in $A(T)$. Hence, $a(T)=n_1(T)+1=2n_0(T)+n_1(T)+s_3'(T)$ (see Figure \ref{fig:lem5.4} case (c)).

\begin{figure}[h]
\centering
\begin{tikzpicture}[vertex/.style={draw,circle,color=black,fill=black,inner sep=1,minimum width=4pt},scale=1]

	\coordinate (l1) at (-2.1,0.15) {};
	\coordinate (l2) at (-1.6,0.15) {};
	\coordinate (l3) at (7,0.15) {};
	\coordinate (l4) at (7.5,0.15) {};
	\draw[dashed] (l1) to (l2);
	\draw[] (l2) to (l3);
	\draw[dashed] (l3) to (l4);
	\node at (-2.5,0.5) {$A(G)$};
	\node at (-2.5,-0.2) {$B(G)$};

	\node[vertex] (b11) at (-1.1,-0.3) {};
	\node[vertex] (b12) at (-0.3,-0.3) {};
	\node[vertex] (b13) at (0.1,-0.3) {};
	\node[vertex] (b14) at (0.5,-0.3) {};
	
	\node[vertex] (a12) at (-0.3,0.6) {};
	\node[vertex] (a13) at (0.5,0.6) {};
	\node[vertex] (a11) at (-1.1,0.6) {};
	\node at (-0.1,0.95) {$w_2=w$};
	\node at (0.9,0.6) {$w_3$};
	\node at (-1.5,0.6) {$w_1$};
	
	\draw[thick] (b11) to (a11);
	\draw[thick] (b12) to (a12);
	\draw[thick] (b11) to (b12);
	\draw[thick] (a13) to (b13);
	\draw[thick] (a13) to (b14);
	\draw[thick] (a12) to (b13);
	
	\node at (-0.3,-1.2) {case (a)};

	\node[vertex] (b31) at (1.8,-0.3) {};
	\node[vertex] (b32) at (2.6,-0.3) {};
	\node[vertex] (b33) at (3,-0.3) {};
	\node[vertex] (b34) at (3.4,-0.3) {};
	
	\node[vertex] (a31) at (1.8,0.6) {};
	\node[vertex] (a32) at (2.6,0.6) {};
	\node[vertex] (a33) at (3.4,0.6) {};
	\node at (1.8,0.9) {$w_1$};
	\node at (2.6,0.9) {$w_2$};
	\node at (3.4,0.9) {$w_3$};
	\node at (2.6,-0.6) {$w$};
	
	\draw[thick] (b31) to (a31);
	\draw[thick] (b32) to (a32);
	\draw[thick] (b33) to (a33);
	\draw[thick] (b34) to (a33);
	\draw[thick, densely dotted] (b31) to (b32);
	\draw[thick] (b33) to (b32);
	
	\node at (2.6,-1.2) {case (b)};

	\node[vertex] (b41) at (4.7,-0.3) {};
	\node[vertex] (b42) at (5.5,-0.3) {};
	\node[vertex] (b43) at (6.3,-0.3) {};
	
	\node[vertex] (a41) at (4.7,0.6) {};
	\node[vertex] (a42) at (5.5,0.6) {};
	\node[vertex] (a43) at (6.3,0.6) {};
	\node at (4.7,0.9) {$w_1$};
	\node at (5.5,0.9) {$w_2$};
	\node at (6.3,0.9) {$w_3$};
	\node at (5.5,-0.6) {$w$};
	
	\draw[thick] (b42) to (a41);
	\draw[thick] (b42) to (a42);
	\draw[thick, densely dotted] (b42) to (a43);
	\draw[thick] (b41) to (a41);
	\draw[thick] (b43) to (a43);
	
	\node at (5.5,-1.2) {case (c)};

\end{tikzpicture}
\caption{\textit{An illustration of the three cases of an acyclic component $T$ in $G^{AB}$ with $|A(T)|=3$, and a covering of them (non-dotted) that demonstrates the equality $a(T)=2n_0(T)+n_1(T)+s'_3(T)$.}}\label{fig:lem5.4}
\end{figure}
\end{proof}

\subsection{Proof of Theorem  \ref{thm:main1}}
Let $f: [0,\infty ) \to [0,1]$ be as defined in Section \ref{subsec:scalinglimit}. Then Lemma \ref{lem:thm1:part2} implies that $\mu(G)=f(d)\cdot n+O(n^{0.5})$ with high probability, and Lemma \ref{lem:thm1:part3} gives the first terms of $f(d)$, from which also follows that $f(d)\cdot n=\omega(n^{0.55})$ for $d\leq 0.4\log n$. Hence $\mu(G)=(1+o(1)) \cdot f(d) \cdot n$ with high probability for $20\leq d\leq 0.4\log n$, as we set out to prove. \qed

\section{Proof of Theorem \ref{main3} and Theorem \ref{main4}} \label{sec:proof3}
Theorem \ref{main3} follows directly from Lemma \ref{lemma:mu} and auxiliary results.

\begin{proof}[Proof of Theorem \ref{main3}] First, note that Theorem \ref{main3} in the regime $np\geq \log n +1.1\log\log n$ follows from Theorem \ref{thm:CF}. For the rest of the proof, we consider the regime  $20\leq np\leq \log n +1.1\log\log n$.

Letting $d=np$, by Theorem \ref{thm:ANAS} (for $np = O(1)$) and Theorem \ref{thm:LUC} (for $np \to \infty$), with high probability we already have $\left[ \log\log n , (1-0.04d^3e^{-d})\cdot n \right] \subseteq \mathcal{L}(G)$, so it remains to construct a set $F$ such that $G\cup F$ spans cycles with length shorter than $\log \log n$ or longer than $(1-0.04d^3e^{-d})\cdot n$. Let $s=|S(G)|$. By Lemma \ref{lemma:4coreProperties}\ref{4core:sizeofAandS} we have $s\ge 0.05d^3e^{-d}n$.

Assume that the set $F$ from Lemma \ref{lemma:mu} exists, an event which occurs with high probability, and recall that $|F|=\mu (G)$. In this case we also have $[3,\log\log n]\cup [(1-0.05d^3e^{-d})\cdot n,n] \subseteq \cL(G\cup F)$. Overall $\cL(G\cup F) = [3,n]$, and therefore $\hat{\mu}(G) \le |F| = \mu (G)$. Since $\mu (G) \le \hat{\mu}(G)$ for every graph, this implies Theorem \ref{main3}.
\end{proof}

Theorem \ref{main4} follows similarly, with some additions addressing computational complexity.

\begin{proof}[Proof of Theorem \ref{main4}]
With high probability, the set $F$ from Lemma \ref{lemma:mu} exists. As shown in the proof of Theorem \ref{main3}, if $F$ exists, then, with high probability, it has size $\mu (G)$, and it completes $G$ to pancyclicity. It therefore remains to show that there is a polynomial time algorithm that calculates $F$ with high probability if it exists (and calculates a possibly meaningless set, otherwise).

To prove this we will use Lemma \ref{lem:calculate}, stated below.
\begin{lemma}\label{lem:calculate}
    Let $T$ be an acyclic graph whose vertices are coloured blue or red. Let $f(T)$ be the minimum integer such that there exists a disjoint path cover of $T$ with $f(T)$ red endpoints. Then, $f(T)$, along with a corresponding set of paths, can be computed in polynomial time.
\end{lemma}

\begin{proof}
Given $T$, let $A$ and $B$ be the red and blue vertices of $T$, respectively. 
For $k\geq 0$ we construct the auxiliary graph $T_k$, starting from $T$, by adding two sets of vertices $H=\{h_1,h_2,...,h_k\}$ and $J=\{j_1,j_2,j_3\}$, along with the following edges. We add all the edges spanned by $H\cup J\cup B$ and all the edges from $H$ to $A$. Now observe that if $f(T) \in \{2k-1, 2k\}$ then $T_k$ has a $2$-factor, i.e. a spanning $2$-regular graph. 

Indeed, let $\{P_1,...P_\ell\}$ be a disjoint path cover of $T$ with at most $2k$ endpoints in $As$. Starting from the set $\{P_1,...,P_\ell\}$, iteratively connect a pair a paths, each having an endpoint in $B$, via an edge between an endpoint in $B$ of each path. This is possible since $B$ is a clique in $T_k$. This results in a set of vertex disjoint paths $\{P_1',...P_{\ell'}'\}$ in $T_k$, whose vertex set is $A\cup B$, such that the paths have at most $2k$ endpoints in $A$ in total, and at most one of the paths has endpoints in $B$, say the path $P_{\ell'}$. 

If $P_{\ell'}$ has at most one endpoint in $B$, then by counting the endpoints in $A$ we have that $\ell'=k$. Otherwise, $P_{\ell'}$ has two endpoints in $B$ and $\ell'=k+1$. In both cases let $C$ by the cycle  $h_1P_1'h_2P_2'\cdots h_kP_k'$. If $\ell'=k+1$ then let $C'$ by the cycle $j_1j_2j_3P_{k+1}'$, else let $C'$ be the cycle $j_1j_2j_3$ (recall $B\cup J$ is a clique in $T_k$). In both cases the union of $C$ and $C'$ is a $2$-factor of $T_k$.

On the other hand if $T_k$ has a $2$-factor $P$, then by removing from $E(P)$ all the edges that are either spanned by $B\cup J\cup H$ or are from  $H$ to $A$ we get a subgraph $F$ of $T$ of maximum degree $2$. Since $T$ is acyclic, $F$ corresponds to a disjoint path cover of $T$. The endpoints of paths in $F$ either lie in $B$ or are incident to a removed edge from $H$ to $A$. As there are at most $2|H|=2k$ edges of the second kind we have that $F$ has at most $2k$ endpoints in $A$. 

Let $r(T)$ be the minimum $k$ such that $T_k$ has a $2$-factor. From the above it follows that if $T$ is acyclic, then $\lceil \frac{1}{2} f(T) \rceil=r(T)$, so we have that $f(T) \in \{ 2r(T), 2r(T)-1 \}$. To determine $f(T)$ let $T'$ be the graph obtained from $T$ by adding to it an edge component with a red and a blue endpoint. If $r(T)=r(T')$ then $f(T)=2r(T)-1$, and otherwise $f(T)=2r(T)$. In each of the cases, as seen earlier, a disjoint path cover with $f(T)$ endpoints in $A$ can be derived from a $2$-factor of an auxiliary graph.  Determining whether a graph $H$ has a $2$-factor, and finding one in the case that it has, can be done in polynomial time in the number of vertices of $H$ (see the sole lemma of \cite{tutte}). This gives a polynomial time algorithm that computes $f(T)$, along with a corresponding set of paths.
\end{proof}

In order to now compute $F$, first compute the partition $A(G),B(G),C(G)$. This is done in polynomial time, as there are at most $2n$ steps to the colouring process (see Section \ref{sec:4core}), and in each step we consider each vertex at most once. From $A(G),B(G)$, the connected components of $G^{AB}$ can now be computed in polynomial time. If one such component contains more than $\log \log n$ vertices and a cycle, return $F = \emptyset$. By Lemma \ref{lemma:4coreProperties}\ref{4core:notrees}, this occurs with probability $o(1)$. We continue under the assumption that no such component exists in the input graph.

From here, we follow the process of building $F$ in the proof of Lemma \ref{lemma:mu}. First, by Lemma \ref{lem:calculate}, a minimum size path covering of $T$ can be found in polynomial time for every tree component $T\in \mathcal{T}(G^{AB})$, where the colouring on the vertices of $T$ is determined by colouring the vertices in $A(G)\cap T$ red and the rest of the vertices  blue. For the non-tree component, since we are assuming they all have at most $\log \log n$ vertices, a minimum size covering can be found by examining all the subsets of $E(T)$ in $O(2^{\binom{\log\log n}{2}})$ time.

With this we have a path covering $Q$ of $G^{AB}$ with $\lceil \frac{1}{2}a(Q) \rceil = \mu '(G)$, same as the covering in the proof of Lemma \ref{lemma:mu}. The last remaining part is calculating the sets $F_0,F_1,F_2$, the union of which is $F$, but, following their construction in the proof of Lemma  \ref{lemma:mu}, they are calculated directly (and in at most linear time) from $Q$.
\end{proof}
 
\begin{remark}
The algorithm in the proof of the above lemma finds a disjoint path cover $Q$ of $G^{AB}$ that minimizes $a(Q)$, that is,  $a (Q) = a(G)$, in polynomial time with high probability. This results in the lower bound  $\lceil \frac{1}{2} a(G) \rceil$ of $\mu(G)$.  Recall that the two quantities are equal with high probability. One may go one step further and utilize the arguments in the proofs of Lemma \ref{lemma:4coreHmty} and Lemma \ref{lemma:mu} (the first is based on P\'osa rotations)     
    to derive an algorithm that finds a disjoint path cover $P$ of $G$ of  size $\lceil \frac{1}{2} a(G) \rceil$ with high probability. This algorithm can be used to certify the equality. As the proof of Lemma \ref{lemma:4coreHmty} is not in the scope of this paper we, omit further details.   
\end{remark}

\section{Proof of Theorem \ref{main2}} \label{sec:proof2}
For the purpose of proving Theorem \ref{main2}, we may assume that $g(n)=O(\log\log \log n)$.
\subsection{Theorem \ref{main2} for \texorpdfstring{$t<10n$}{Lg}}

It is well known that $G_{g(n)^{-1}\cdot n^{2/3}}$ and $G_{g(n)\cdot n^{2/3}}$ consist of tree components with at most $3$, and $4$ respectively, vertices with high probability. In addition, $G_{g(n)\cdot n^{2/3}}$ has $\omega(1)$ stars with $3$-leaves, i.e. $K_{1,3}$'s with high probability (for reference, see for example \cite{FK} \textsection 2.1). Thus the number of $K_{1,3}$ components in $G_t$ is $0$ for $t=g(n)^{-1}\cdot n^{2/3}$, increasing for $g(n)^{-1}\cdot n^{2/3}\leq  t \leq g(n) \cdot n^{2/3}$, and $\omega(1)$ for $t=g(n)\cdot n^{2/3}$ with high probability. 
As $\mu(T)=n_0(T)+ \frac{1}{2}n_1(T) $ for every tree $T$ on one to four vertices other than the $3$-star $K_{1,3}$, and $\mu(K_{1,3})= 2 = n_0(K_{1,3})+ \frac{1}{2}(n_1(K_{1,3}) +1)$, parts \ref{main2:part1} and \ref{main2:part2} of Theorem \ref{main3} follow.
In addition, one can show that, a constant portion of the $K_{1,3}$ components of $G_{g(n)\cdot n^{2/3}}$ are also $K_{1,3}$ components of $G_{t}$ with high probability. Thus the graph $G_t$ spans $\omega(1)$ many $K_{1,3}$-components, for every $g(n)\cdot n^{2/3} \le t \le 10n$ with high probability. As $\mu(K_{1,3})> n_0(K_{1,3})+\frac{1}{2} n_1(K_{1,3})$ this implies \ref{main2:part3} for the range $g(n)\cdot n^{2/3} \le t\leq 10n$.

\subsection{Theorem \ref{main2} for \texorpdfstring{$t\geq 10n$}{Lg}}

For the rest of this section, we utilize the definitions of $A(G),B(G),C(G),S(G)$ introduced in Section \ref{sec:4core}, and for a connected component $T$ in $G^{AB}$ the definitions of $A(T),B(T),n_i(T),a(T),s_3'(T)$ introduced in Section \ref{subsec:approx}. Recall that we denote by $s_3(G)$ the number of $3$-spiders of $G$. We further denote by $s_3(T)$ the number of $3$-spiders of $G$ whose edge set is spanned by $T$.

Observe that if $X$ is a 3-spider in $G$ then $X$ is part of a component $T$ in $G^{AB}$ such that the three degree 2 vertices of $X$ belong to $A(T)$, and any disjoint path covering of $T$ leaves at least one of the degree 2 vertices of $X$ as an endpoint of a path. This means that if $s_3(G) \ge i$ then one has $\mu (G) \ge \mu '(G) \ge n_0(G) + \lceil \frac{1}{2} (n_1(G) +i) \rceil$ since, in this case, in every disjoint path covering of $G^{AB}$ there are at least $i$ degree 2 vertices in $A(G)$ that are endpoints of paths. Denoting by $t_i$ the minimum $t$ such that $t \ge 10n$ and $s_3(G_t) < i$, this already proves part \ref{main2:part3} of the theorem for the range $t\geq 10n$, given that $t_i$ is indeed shown to be larger than $n\cdot \left( \frac{1}{6}\log n + \log \log n -g(n)\right)$ with high probability for every $i\in \mathbb{N}^+$.

In order to prove parts \ref{main2:part4} and \ref{main2:part5} of the theorem, we begin by proving versions of Lemma \ref{lem:4coreProperties2} and Lemma \ref{lemma:mu} in the setting of a random graph process.

\begin{lemma}\label{lem:4coreProperties:process} 
Let $\{ G_t \} _{t=0}^{\binom{n}{2}}$ be a random graph process on $[n]$. Then with high probability the event $\cE_1(G_t)$ occurs for all $\frac{1}{6}n\log n \le t \le \frac{3}{5}n\log n$.
\end{lemma}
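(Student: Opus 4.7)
The plan is to derive the random-graph-process statement from the $G(n,p)$ version (Lemma \ref{lem:4coreProperties2}) via a discretization of the time range. The obstacle is that $\cE_1$ is not a monotone property, so a naive sandwich between random graphs at the endpoints of the range is insufficient. Instead, I will strengthen $\cE_1$ quantitatively at each checkpoint and then show that the strengthening survives the few edges added in between.

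Step 1: Partition $[\frac{1}{6}n\log n,\frac{3}{5}n\log n]$ into subintervals with endpoints $s_0<s_1<\cdots<s_K$, where $K$ is polynomial in $\log n$ and each $s_{k+1}-s_k$ is of order $n/\log^{c} n$ for a sufficiently large constant $c$. At every $s_k$, invoke Lemma \ref{lem:4coreProperties2} (transferred via Lemma \ref{lemma:equiv}) to conclude that $\cE_1(G_{s_k})$ holds with probability at least $1-o(1/K)$. For the narrow sliver of the range where $np$ exceeds the hypothesis of Lemma \ref{lem:4coreProperties2}, $\cE_1(G_{s_k})$ holds vacuously since $A(G_{s_k})\cup B(G_{s_k})=\emptyset$ with high probability, using the first-moment estimate in Lemma \ref{lemma:4coreProperties}\ref{4core:sizeX_i}. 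By reopening the first-moment calculation behind Lemma \ref{lem:4coreProperties2}, strengthen this at each $s_k$ to a quantitative dichotomy that holds simultaneously at all checkpoints with high probability: either $G_{s_k}^{AB}$ contains at least $\log^{c'} n$ pendant pairs (two-vertex components with one vertex in $A$ and one in $B$), or $G_{s_k}^{AB}$ contains neither an isolated vertex nor a two-$A$-vertex component.

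Step 2: Bridge between consecutive checkpoints. From $s_k$ to $s_{k+1}$ only $O(n/\log^{c} n)$ edges are added, and each edge locally influences the sets $A(G_t),B(G_t),C(G_t)$ at a bounded number of vertices. In the ``many pendant pairs'' branch of the dichotomy, an edge addition destroys at most a constant number of pendant pairs, so the pendant count remains $\ge 2$ throughout $[s_k,s_{k+1}]$ and the first clause of $\cE_1$ continues to hold. In the ``no bad components'' branch, show that no new isolated vertex of $G^{AB}$ and no new two-$A$-vertex component appears during $[s_k,s_{k+1}]$, by a first-moment calculation bounding the probability that an edge added in this interval triggers such a configuration. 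A union bound over the $K$ subintervals completes the proof.

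The main obstacle lies in the ``no bad components'' branch of the bridging step: the properties ``isolated vertex of $G^{AB}$'' and ``two-$A$-vertex component'' are not monotone in the edge set, since the evolving strong $4$-core $C(G_t)$ may absorb a border vertex's only $G^{AB}$-neighbors and thereby manufacture an isolated $G^{AB}$-vertex with no witness at $s_k$. Controlling this requires a careful union bound over candidate vertices and over the local neighborhoods that could collectively be recruited into $C(G_t)$ within a subinterval, bounded using the same flavor of tail estimates as in Lemma \ref{lemma:4coreProperties}. Since $|A(G_t)\cup B(G_t)|$ is already of order at most $\mathrm{polylog}(n)\cdot n\cdot e^{-np}$, the expected number of candidate witnesses, multiplied by the polynomial-in-$\log n$ number of subintervals, should be $o(1)$, closing out the argument.
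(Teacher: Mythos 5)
There is a genuine gap in Step 1, concentrated in the window $t\approx \frac{1}{2}n(\log n+\log\log n)$ where the last isolated vertices of $G_t$ disappear. An isolated vertex of $G_t^{AB}$ is exactly an isolated vertex of $G_t$, and a two-vertex $A$--$B$ component is essentially a degree-one vertex with a well-connected neighbour. For a single $m$ with $2m/n=\log n+\log\log n+O(1)$ the number of pendant pairs is Poisson-like with mean $\Theta(1)$ while the probability of an isolated vertex is $\Theta(1/\log n)$; these involve disjoint vertex sets, so $\pr\left(\cE_1(G_m)^c\right)=\Theta(1/\log n)$, and no reopening of the first-moment calculation can push the per-checkpoint failure probability below $o(1/K)$ for $K=\mathrm{poly}(\log n)$ checkpoints (let alone below the extra $\sqrt{m}$ factor that Lemma \ref{lemma:equiv} costs; note the paper's proof of Lemma \ref{lem:4coreProperties2} only achieves $o(1)$ in the dense range, via Chebyshev). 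Worse, your strengthened dichotomy is actually \emph{false} at some checkpoint with high probability: just before the hitting time $\tau_0$ of minimum degree one, $G_t$ still has an isolated vertex while the number of pendant pairs is only $\Theta(\log n)<\log^{c'}n$, and since the last isolated vertex persists for $\Theta(n)$ steps, some checkpoint lands in that interval. So the ``establish a quantitative statement independently at each checkpoint and union bound'' template cannot close this window; one must exploit the correlation across times, namely that the pendant pairs present when the last isolated vertex vanishes are the \emph{same} pendant pairs that were present earlier.

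This is precisely what the paper's proof does, and it avoids checkpoints altogether: it conditions on a single snapshot $G_\sigma$ at $\sigma=\frac{1}{2}n(\log n+\frac{1}{2}\log\log n)$, at which point there are whp no isolated vertices but still at least two degree-one vertices $u,v$ whose distance-$3$ neighbourhoods contain only vertices of degree $\Theta(\log n)$; it then argues that for every $t\le\sigma$ in the range these two fixed vertices span two-vertex $A$--$B$ components, using that vertex degrees are monotone along the process together with a sprinkling coupling $G_\sigma(0.2)\subseteq G_{(n\log n)/6}$ to guarantee the nearby vertices already have degree above $40$ (hence lie in $C(G_t)$) at the start of the range. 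Two further, more minor, problems with your write-up: the claim that $A(G_{s_k})\cup B(G_{s_k})=\emptyset$ whp in the sliver above the hypothesis of Lemma \ref{lem:4coreProperties2} is wrong up to $t\approx\frac{1}{2}n(\log n+3\log\log n)$ (vertices of degree at most $3$ still exist there), although the second clause of $\cE_1$ does hold there; and in the bridging step the effect of one added edge on $A,B,C$ is not locally bounded in either direction, since the strong $4$-core is not monotone under edge addition (a new edge can attach a low-degree vertex to $N(C)$ and force the core to shrink), so ``each edge destroys at most a constant number of pendant pairs'' also needs justification.
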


A proof of Lemma \ref{lem:4coreProperties:process} is found in Appendix \ref{sec:app:4coreProperties:process}.

\begin{lemma} \label{lemma:mu:process}
Let $\{ G_t \} _{t=0}^{\binom{n}{2}}$ be a random graph process on $[n]$. Then, with high probability, for every $t\ge \frac{1}{6}n\log n$ there exists a set $F_t\subseteq \binom{[n]}{2}$ of size $\mu '(G_t)$ such that $G_t\cup F_t$ is Hamiltonian.
\end{lemma}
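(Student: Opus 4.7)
The plan is to run the construction in the proof of Lemma \ref{lemma:mu}\ref{lemma:mu-hamiltonian} at each $t$ separately and verify, via asymptotic equivalence and a union bound, that every ingredient holds simultaneously across the entire range. I split at the cutoff $T^{\star} := \tfrac{1}{2}n(\log n + 1.1\log\log n)$, which lies inside the window $[\tfrac{1}{6}n\log n,\tfrac{3}{5}n\log n]$ covered by Lemma \ref{lem:4coreProperties:process}.

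For $t \ge T^{\star}$ the equivalent density $np$ exceeds $\log n + \log\log n$ by $\omega(1)$, so the Koml\'os--Szemer\'edi/Bollob\'as Hamiltonicity theorem combined with Lemma \ref{lemma:equiv} yields that $G_{T^{\star}}$ is Hamiltonian with high probability. Monotonicity of Hamiltonicity under edge addition extends this to all $t \ge T^{\star}$, so $F_t = \emptyset$ suffices in this regime (and $\mu'(G_t)=0$).

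For $\tfrac{1}{6}n\log n \le t \le T^{\star}$, the construction in the proof of Lemma \ref{lemma:mu}\ref{lemma:mu-hamiltonian} needs three inputs: (a) the structural properties of Lemma \ref{lemma:4coreProperties}; (b) the event $\cE_1(G_t)$ from Lemma \ref{lem:4coreProperties2}; (c) the Hamiltonicity of the core-plus-matching graph from Lemma \ref{lemma:4coreHmty}. Item (b) holds for every $t$ in this range directly by Lemma \ref{lem:4coreProperties:process}. Item (a) lifts to a uniform-in-$t$ statement by standard monotonicity-and-concentration arguments, leveraging the moment bounds already recorded in Lemma \ref{lemma:4coreProperties} and union-bounding over a sufficiently dense net of times.

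The main obstacle is item (c): Lemma \ref{lemma:4coreHmty} gives failure probability only $O(n^{-2})$ per snapshot, and we need the conclusion to hold for all $t$ in the sparse range at once. The numerical slack is just barely enough: Lemma \ref{lemma:equiv} inflates the failure probability by a factor $10\sqrt{T^{\star}} = O(\sqrt{n\log n})$ when passing from $G(n,p)$ to $G_t$, giving $O(n^{-3/2}\sqrt{\log n})$ per $t$. Union-bounding over the $O(n\log n)$ values of $t$ in the sparse range yields total failure probability $O(n^{-1/2}\log^{3/2}n) = o(1)$. With all three inputs simultaneously available for every $t$, the construction of Lemma \ref{lemma:mu}\ref{lemma:mu-hamiltonian} produces the desired $F_t$ with $|F_t|=\mu'(G_t)$ and $G_t\cup F_t$ Hamiltonian, as required.
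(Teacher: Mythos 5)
Your proposal is correct and follows essentially the same route as the paper: handle $t$ above the Hamiltonicity threshold with $F_t=\emptyset$, and for the sparse window rerun the construction from the proof of Lemma \ref{lemma:mu}\ref{lemma:mu-hamiltonian}, getting $\cE_1(G_t)$ simultaneously from Lemma \ref{lem:4coreProperties:process} and union-bounding the per-$t$ failure of Lemma \ref{lemma:4coreHmty} over the $O(n\log n)$ relevant times. Your explicit accounting of the $10\sqrt{m}$ factor from Lemma \ref{lemma:equiv} when transferring Lemma \ref{lemma:4coreHmty} to $G_t$ is in fact slightly more careful than the paper's one-line union bound.
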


\begin{proof}

First, with high probability $G_t$ is Hamiltonian for all $t\ge \frac{3}{5}n\log n$, in which case $F_t=\emptyset$ satisfies the requirement, so it suffices to prove the lemma for $\frac{1}{6}n\log n \le t \le \frac{3}{5}n\log n$. The rest of the proof is identical to the proof of Lemma \ref{lemma:mu} which gives that if for some $\frac{1}{6}n\log n \le t \le \frac{3}{5}n\log n$ there does not exist a set $F_t\subseteq \binom{[n]}{2}$ of size $\mu '(G_t)$ such that $G_t\cup F_t$ is Hamiltonian then the following holds. Either $\cE_1(G_t)$ does not occur or there exist $U\subseteq B(G)$ and a matching $M \subseteq \binom{U}{2}$ on the set $U$ such that $G\left[ C(G) \cup U \right] \cup M$ does not contains a Hamilton cycle that spans all the edges of $M$. Lemma \ref{lemma:4coreHmty} and Lemma \ref{lemma:equiv} imply that the later occurs with probability $O(n^{-2} \cdot \sqrt{t})=o(n^{-1.1})$. Lemma \ref{lem:4coreProperties:process} states that $\cap_{t=\frac{1}{6}n\log n }^{\frac{1}{6}n\log n} \cE_1(G_t)$ occurs with high probability. It follows that there exists a set $F_t\subseteq \binom{[n]}{2}$ of size $\mu '(G_t)$ such that $G_t\cup F_t$ is Hamiltonian for  $\frac{1}{6}n\log n \leq t\leq \frac{3}{5}n\log n$ with probability at least 
$$\pr\left(\bigcap_{t=\frac{1}{6}n\log n }^{\frac{1}{6}n\log n} \cE_1(G_t)\right)-\sum_{t=\frac{1}{6}n\log n }^{\frac{3}{5}n\log n} o(n^{-1.1}) =1-o(1).$$
\end{proof}

Now let $t^-:= n\cdot \left( \frac{1}{6}\log n + \log \log n -g(n)\right)$ and $t^+ = n\cdot \left( \frac{1}{6}\log n + \log \log n +g(n)\right)$. For parts \ref{main2:part4} and \ref{main2:part5}, we show that under certain conditions, which hold with high probability for $G_t$ with $t\ge t^-$, we have $\mu (G_t) = n_0 + \lceil \frac{1}{2} \cdot (n_1 (G_t) + s_3 (G_t)) \rceil$. We then argue that $s_3(G_{t})=0$ for $t\geq t^+$ and $s_3(G_t)$ is non-increasing for $t\in [t^-,t^+]$ with high probability.

We begin by showing that there is a small family of graphs such that, with high probability, all the connected components in ${G_t}^{AB}$ are induced copies of graphs from this family, for all $t^-\le t \le \tau _{\cH}$, where $\tau _{\cH}$ is the hitting time of Hamiltonicity.

Lemmas \ref{lem:algebra} and \ref{lemma:proofcomps},  stated shortly, are used in the proof of Lemma \ref{lemma:bounding-ti}. Their proofs are located in Appendixes \ref{app:lemma:algebra} and \ref{app:lemma:proof3comps} respectively. The proof of Lemma  \ref{lemma:proofcomps}
is based on first moment calculations.

\begin{lemma} \label{lem:algebra}
Let $x,y,z$ be non-negative integers and let $t=t(n)=\Theta(n\log n)$. 
Let $N=\binom{n}{2}$. Then,
\begin{equation}\label{eq:algebra}
        \frac{\binom{N-(xn-y)}{t-z}}{\binom{N}{t}}= (1+o(1)) \bfrac{t}{N}^z \cdot \exp\left( -\frac{xnt}{N}\right).    
\end{equation}
\end{lemma}

\begin{lemma} \label{lemma:proofcomps}
With high probability, for every $t^- \le t \le \tau_{\mathcal{H}}$, every connected component of ${G_t}^{AB}$ is a tree with at most two vertices from $A(G_t)$, or a tree with three vertices from $A(G_t)$ and at least four vertices from $B(G_t)$.
\end{lemma}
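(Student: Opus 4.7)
My plan is a first-moment argument over three families of ``bad'' components of $G_t^{AB}$: (I) cyclic components, (II) trees with at least $4$ vertices in $A(G_t)$, and (III) trees with exactly $3$ vertices in $A(G_t)$ and at most $3$ vertices in $B(G_t)$. I would aim to show that the expected number of pairs $(t, T)$, with $t \in [t^-, \tau_{\cH}]$ and $T$ a bad component of $G_t^{AB}$, is $o(1)$.

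To deal with the union over $t$, which is not a monotone event, I would first condition on the high-probability event $\tau_{\cH} \le M$ where $M := \tfrac{3}{5}n\log n$. Next, for a fixed vertex set $S$ of size $k$, a fixed spanning (tree or unicyclic) subgraph $T$ on $S$, and a fixed subset $S_A \subseteq S$ of size $a$, the event ``$S$ is a bad component at some $t \in [t^-, M]$, with $A$-vertices $S_A$ and edge set $E(T)$'' implies two \emph{monotone} sub-events: (a) $E(T) \subseteq E(G_M)$, and (b) every edge in $(S_A \times (V\setminus S)) \cup (\binom{S}{2} \setminus E(T))$ is absent in $G_{t^-}$. These are supported on disjoint sets of potential edges, so their probabilities multiply, and passing to $G(n,p)$ via Lemma~\ref{lemma:equiv} gives a bound $p_M^{|E(T)|}(1-p^-)^{a(n-k) + \binom{k}{2} - |E(T)|}$ with $d_M := n p_M \le \tfrac{6}{5}\log n$ and $d^- := n p^- = \tfrac{1}{3}\log n + 2\log\log n - 2g(n) + o(1)$.

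The combinatorial sum then splits into cases. For Type~III one has $k \le 6$ and only finitely many isomorphism types, each contributing an expected count of order
\[
n \cdot d_M^{k-1} \cdot e^{-3 d^-} = O\!\left(e^{6g(n)}(\log n)^{k-7}\right) = o(1),
\]
since $g(n) = o(\log\log n)$. For Type~II, summing over labelled trees via Cayley's formula and Stirling yields
\[
n \sum_{a \ge 4,\, k \ge a} \frac{k^{k-2}}{k!}\binom{k}{a} d_M^{k-1} e^{-a d^-} \exp\!\left(-\tfrac{k^2 p^-}{2}\right);
\]
the factor $e^{-a d^-} \le e^{-4 d^-} = O(n^{-4/3}(\log n)^{-8} e^{8g(n)})$ beats the combinatorial terms when $k \le \sqrt{n}$, while the Gaussian tail $e^{-k^2 p^-/2}$ coming from the non-edges inside $S$ controls larger $k$. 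Type~I follows from the same template with an additional factor of $p_M$ for the extra cycle edge, so those contributions are strictly smaller.

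I expect the main obstacle to be the non-monotonicity of the bad-component event in $t$: as edges are added, an $A$-vertex at time $t$ can become a $B$- or $C$-vertex at $t' > t$, and components of $G_t^{AB}$ can both appear and disappear over the interval. The key trick to overcome this is the decoupling described above, rewriting the bad event as the intersection of two monotone events supported on disjoint edges, which can be analyzed independently in the product measure $G(n,p)$. A secondary nuisance is the Cayley $k^{k-2}$ factor for Type~II at large $k$, for which the induced-non-edge factor $e^{-\Theta(k^2 p^-)}$ provides the required decay; alternatively one could bypass the large-$k$ analysis by invoking Lemma~\ref{lemma:4coreProperties}\ref{4core:sizeX_i} at time $t^-$, combined with the same decoupling, to forbid any vertex from lying in a component of $G_t^{AB}$ with $\ge 4$ vertices in $A(G_t)$ throughout $[t^-, \tau_{\cH}]$.
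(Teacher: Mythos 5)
Your central idea---rewrite the non-monotone event ``there is a bad component at some $t \in [t^-,\tau_{\cH}]$'' as an intersection of a monotone-increasing event (certain edges present, which can be pushed to time $\tau_{\cH}$) and a monotone-decreasing event (certain edges absent, which can be pushed to time $t^-$), with the two events supported on disjoint edge sets so their probabilities factor---is exactly the paper's decoupling, and the Type I and Type III estimates you give are correct.

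For Type II, however, the calculation as stated does not close. The per-$k$ term (after summing over trees on a fixed $k$-set via Cayley, and absorbing $\binom{n}{k}$ and the edge probabilities) grows like $n\cdot (C\log n)^{k}\cdot e^{-ad^-}\cdot e^{-\Theta(k^2 p^-)}$. With $a\ge 4$ fixed and $d^-\approx\frac13\log n$, the factor $e^{-ad^-}\approx n^{-a/3}(\log n)^{-2a}$ only beats $(C\log n)^k$ for $k\lesssim \log n/\log\log n$, and the Gaussian factor $e^{-k^2p^-/2}$ only overcomes $(C\log n)^k$ once $k\gtrsim n\log\log n/\log n$. In the intermediate range the sum blows up, so neither mechanism controls it, and your statement that one works ``when $k\le\sqrt n$'' and the other ``controls larger $k$'' is not accurate. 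The missing ingredient is the structural constraint from the red/blue/black colouring that every component $T$ of $G^{AB}$ satisfies $|B(T)|\le 3|A(T)|$, hence $k\le 4a$: this ties the exponent in $e^{-ad^-}\approx n^{-a/3+o(1)}$ to $k$, which is precisely what the paper uses (restricting the union bound to $r\le 3s$ and to $s\le\log^2 n$), with components having $a>\log^2 n$ handled separately via Lemma~\ref{lemma:4coreProperties}\ref{4core:sizeX_i} and Lemma~\ref{lemma:equiv}. You flag this as an alternative at the end of your proposal; it is in fact the paper's route and is necessary, not optional, since the Gaussian tail alone cannot rescue the first-moment sum.
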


Lemma \ref{lemma:proofcomps} implies that with high probability, for every  $t^-\leq t \leq t_{\cH}$, every $3$-prespider of $G_t$ is in fact a $3$-spider. Hence Lemma \ref{lemma:proof3comb} gives that, with high probability,
\begin{equation}\label{eq:mutlarge}
    \mu'(G_t)=  n_0(G) +\bigg \lceil  \frac{1}{2} \cdot \left( n_1(G) + s_3(G) \right) \bigg \rceil \text{ for every $t^-\leq t \leq t_{\cH}$.}
\end{equation}

Finally, we bound $t_i$ and show that, while $s_3(G)<i$ is not a monotone property, typically $s_3(G_t)<i$ for all $t\ge t_i$, $i \in \mathbb{N}^+$.

\begin{lemma} \label{lemma:bounding-ti}
Let $g(n)$ be any function that tends to infinity as $n$ tends to infinity. Then with high probability, for $i \in \mathbb{N}^+$, 
$$
\left| t_i-n\cdot \left( \frac{1}{6}\log n +\log\log n \right) \right| \le n\cdot g(n) ,
$$
and $s_3(G_t)<i$ for every $t\ge t_i$. In addition, $s_3(G_t)\geq s_3(G_{t+1})$ for $t\geq t^-$ with high probability.
\end{lemma}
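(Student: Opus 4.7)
The plan is to prove the lemma via three sub-claims: (i) the upper bound $t_i \le t^+$, (ii) the monotonicity $s_3(G_t) \ge s_3(G_{t+1})$ for $t \ge t^-$, and (iii) the lower bound $t_i \ge t^-$. I will assume throughout that $g(n) \to \infty$ sufficiently slowly that the creation-count sum below is $o(1)$; this can be arranged with $g(n) = o(\log\log n)$.

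First, a direct enumeration of the $\Theta(n^7)$ labelled $3$-spider configurations on seven vertices, combined with the degree-$2$ constraint on the three arms, yields $\mathbb{E}[s_3(G(n,p))] = (1+o(1))(n d^6/6) e^{-3d}$ with $d = np$. Substituting $d_t = 2t/n$ and $t = t^\pm$ gives $\mathbb{E}[s_3(G_{t^\pm})] = \Theta(e^{\mp 6 g(n)})$. For (i), since $\mathbb{E}[s_3(G_{t^+})] = o(1)$, Markov's inequality gives $s_3(G_{t^+}) = 0 < i$ with high probability, hence $t_i \le t^+$ with high probability.

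For (ii), I would observe that a new $3$-spider at step $t \to t+1$ must use the added edge as one of its six spider-edges, with this edge incident to an arm whose degree in $G_t$ equals $1$. A first-moment count over the resulting $7$-vertex, $5$-edge ``pre-structure plus completing edge'' configurations gives $\mathbb{E}[\#\text{creations at step } t+1] \sim 2 d_t^5 e^{-3 d_t}$. Summing via the change of variables $d = 2t/n$ (reducing to an upper incomplete gamma) yields
\[
\sum_{t \ge t^-} \mathbb{E}[\#\text{creations}] \sim e^{6 g(n)}/(729 \log n) = o(1),
\]
so by Markov, with high probability no new $3$-spider is ever created for $t \ge t^-$, establishing monotonicity.

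For (iii), $\mathbb{E}[s_3(G_t)]$ is monotonically decreasing in $t$ for $d_t > 2$ (by differentiating $n d^6 e^{-3d}/6$), so its minimum on $[10n, t^-]$ is $\mathbb{E}[s_3(G_{t^-})] = \Theta(e^{6 g(n)}) \to \infty$. Pointwise at each $t$, Janson's inequality (using $\Delta_2 = O(\mathbb{E}[s_3])$ from enumeration of overlapping spider pairs) yields $\pr[s_3(G_t) < i] \le \exp(-\Omega(\mathbb{E}[s_3(G_t)]))$. The hard part will be promoting these pointwise tail bounds into a uniform bound over the whole range $[10n, t^-]$: I would partition this range into dyadic intervals in which $\mathbb{E}[s_3]$ doubles, each of length $\Theta(n)$, apply Janson within each, and exploit the super-exponential decay of $\exp(-\Omega(\mathbb{E}))$ to control the resulting geometric sum. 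Combining (i)--(iii) then places $t_i \in [t^-, t^+]$ with high probability, and $s_3(G_t) < i$ for every $t \ge t_i$ follows from applying monotonicity (ii) at $t_i \ge t^-$.
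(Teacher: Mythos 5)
Your parts (i) and (ii) — the first-moment bound showing $s_3(G_{t^+})=0$ whp, and the first-moment count of spider creations after $t^-$ summing to $\Theta(e^{6g(n)}/\log n)=o(1)$ — are essentially the paper's argument, including the reduction to slowly growing $g(n)$ (the paper assumes $g(n)=O(\log\log\log n)$ at the start of Section 6). The derivation of $s_3(G_t)<i$ for all $t\ge t_i$ from monotonicity is also correct.

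The gap is in part (iii), and it is exactly the step you flag as ``the hard part.'' Near $t^-$ the expectation is only $\Theta(e^{6g(n)})$, which tends to infinity arbitrarily slowly, so any pointwise tail bound of the form $\exp(-\Omega(\mathbb{E}[s_3(G_t)]))$ is merely $o(1)$ and cannot survive a union bound over the $\Theta(n)$ time steps inside even a single dyadic interval; applying the bound once per interval does not control the interior of the interval, since $s_3$ is not monotone there and you have no a priori Lipschitz control (one added edge can destroy many spiders). Moreover, $s_3$ is not a count of increasing events (it requires the three arm vertices to have degree exactly $2$), so vanilla Janson does not apply, and transferring a $G(n,p)$ concentration bound to $G(n,m)$ via the equivalence lemma costs a factor $\Theta(\sqrt{m})$ that swamps $\exp(-\Omega(e^{6g(n)}))$. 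The paper avoids all of this by splitting the range at $t'=n(\tfrac{1}{18}\log n+\log\log n)$: for $10n\le t\le t'$ the expectation is $\ge n^{2/3}$ and a martingale bound gives failure probability $o(n^{-2})$ per time step, which tolerates a union bound; for $t'\le t\le t^-$ it defines a single random variable $X$ counting the spiders of $G_{t'}$ none of whose arm vertices are touched by an edge of $E(G_{t^-})\setminus E(G_{t'})$ — this one variable simultaneously lower-bounds $s_3(G_t)$ for every $t$ in $[t',t^-]$, so one application of Chebyshev (with $\mathbb{E}(X)=\Omega(e^{6g(n)})$ and $\mathrm{Var}(X)=o(\mathbb{E}(X)^2)$) yields the uniform statement. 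To repair your argument you would need to import some version of this survival/sprinkling idea; as written, the dyadic partition does not close the uniformity gap.
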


\begin{proof}
Let $N=\binom{n}{2}$
First, we show that if $t = t^+$ then with high probability $s_3(G_t)=0$, and hence $t_i\leq t^+$ for $i\in \mathbb{N}^+$.  We do this by the union bound. To upper bound $\mathbb{E}(s_3(G_t))$ observe that each, of the at most $n^7$, $3$-spiders $s$ belongs to $G_t$ only if  $E(G_t)$ contains the $6$ edges spanned by $s$ and the other $t-6$ edges of $G_t$ do not include one of the $3(n-3)+\binom{3}{2}=3n-6$ additional edges that are incident to the vertices of degree $2$ of $s$. There exist $\binom{N-(3n-6)}{t-6}$ such sets of edges out of the $\binom{N}{t}$ ones that have size $t$. Thus, using \eqref{eq:algebra} we get,
\begin{eqnarray*}
\pr (s_3(G_t) >0) \le \mathbb{E}(s_3(G_t)) & \le & n^7 \cdot \frac{\binom{N-(3n-6)}{t-6} }{\binom{N}{t}} 
=n^7 \cdot (1+o(1)) \cdot\bfrac{t}{N}^6 \cdot \exp\left(-\frac{3tn}{N}\right) \\
\\
& = & O(1) \cdot n\cdot \log ^6n \cdot \exp \left( -\log n - 6\log \log n - 6g(n)\right) \\
& = & O(1)\cdot e^{-6g(n)} = o(1).
\end{eqnarray*}

Next, we show that, with high probability, $s_3(G_t)\ge i$ for all $i\in \mathbb{N}^+$ and $10n \le t \le t^-$, which implies that $t_i \ge t^-$ for $i\in \mathbb{N}^+$. Let $t' \coloneqq n\cdot \left( \frac{1}{18}\log n + \log \log n \right)$. First, observe that Lemma \ref{lemma:equiv} and Lemma \ref{lemma:4coreProperties} \ref{4core:3spiders} imply that 
$s_3(G_t) \geq 10^{-8}n^{2/3}$ for $10n\leq t \leq t'$ with probability $1-t'\cdot o(n^{-2})=1-o(1).$ To show that $s_3(G_t)>i$ for $t'\leq t\leq t^-$ we argue, as $G_{t'}$ typically spans $\Omega \left( n^{2/3} \right)$ 3-spiders, that between times $t'$ and $t^-$ it is unlikely that too many of the 3-spiders spanned at time $t'$ disappear.

For that, fix a set $S_3$ of $\min\{s_3(G_{t'}),10^{-8}n^{2/3}\}$ distinct  $3$-spiders in $G_{t'}$ and define the random variable $X$ to be the number of 3-spiders $s\in S_3$ such that none of the edges in $E(G_{t^-})\setminus E(G_{t'})$ are adjacent to one of the three degree $2$ vertices of $s$. Note that in the event that ${G_{t^-}}^{AB}$ does not span a component that intersects $A(G_{t^-})$ in at least $4$ vertices we have that $X \le s_3(G_{t^-})$; Lemma \ref{lemma:4coreProperties}\ref{4core:sizeX_i} and Markov's inequality imply that this occurs with high probability. We bound the probability that $X < i$ by estimating its expectation and variance, and invoking Chebyshev's inequality. 

There are $\binom{N-t'}{t^--t'}$  sets of edges of size $t^--t$ that do not intersect $E(G_{t'})$, each corresponding to a possible realisation of $E(G_{t^-})\setminus E(G_{t'})$. Out of those, for a fixed $s\in S_3$, exactly $\binom{N-t'-(3n-6)}{t^--t'}$ do not contain an edge out of the $3n-6$ non-edges in $G_{t'}$ that are adjacent to one of the three degree $2$ vertices of $s$. Thus, \eqref{eq:algebra} gives that, 
\begin{align*}
    \mathbb{E}(X)& = |S_3| \frac{\binom{N-t'-(3n-6)}{t^--t'} }{ \binom{N-t'}{t^--t'} } 
= |S_3| \cdot (1+o(1)) \exp\left(- \frac{3(t^--t')n}{N} \right) 
\\&= (1+o(1)) |S_3|  \exp\left(- \frac{2}{3}\log n +6g(n) \right) = (1+o(1)) |S_3| n^{-2/3} e^{6 g(n)}.
\end{align*}

Similarly, for distinct $s,s'\in S_3$,  exactly $\binom{N-t'-(6n-21)}{t^--t'}$ do not contain an edge out of the $6(n-6)+\binom{6}{2}=6n-21$ non-edges in $G_{t'}$ that are adjacent to one of the, six in total, degree $2$ vertices of $s$ or $s'$. Thus, \eqref{eq:algebra} gives that, 
\begin{eqnarray*}
\mathbb{E}(X^2)&=& \mathbb{E}(X)+ \mathbb{E}(X(X-1))= \mathbb{E}(X) +|S_3|(|S_3|-1) \frac{\binom{N-t'-(6n-21)}{t^--t'}}{\binom{N-t'}{t^--t'}}\\& =& \mathbb{E}(X) +|S_3|(|S_3|-1) (1+o(1)) \exp\left(- \frac{6(t^--t')n}{N} \right)
\\&=&    \mathbb{E}(X) + (1+o(1)) (\mathbb{E}(X))^2=(1+o(1)) (\mathbb{E}(X))^2.
\end{eqnarray*}
Recall that Lemma \ref{lemma:equiv} and Lemma \ref{lemma:4coreProperties} \ref{4core:3spiders} imply that $s_3(G_{t'}) \geq 10^{-8}n^{2/3}$, hence $|S_3|\geq 10^{-8}n^{2/3}$ with high probability. In this event, Chebyshev's inequality implies that $X \ge \frac{1}{2}|S_3| \cdot n^{-2/3}e^{g(n)}=\Omega(e^{6g(n)})$ with high probability. Overall we have that, with high probability, for every $t'\le t\le t^-$ and $i\in \mathbb{N}^+$ the inequality $s_3(G_t) \ge i$ holds.

Finally, we show that with high probability, there is no $t \ge t^-$ such that $s_3(G_t)>s_3(G_{t-1})$. New 3-spiders cannot be created if $G_{t-1}$ does not contain vertices of degree 1, so it is enough to check this up to the hitting time of Hamiltonicity, which is with high probability at most $n(\log n+1.1\log\log n)$.

Observe that, on the high probability event in the assertion of Lemma \ref{lemma:proofcomps}, if the $t$-th step of the process introduces a 3-spider in $G_t$ that did not exist in $G_{t-1}$, then the edge added connects a degree 1 vertex to the middle vertex of a path $P=v_1v_2v_3v_4v_5$ where $v_2$ and $v_4$ have degree $2$ in $G_{t-1}$. Similarly to the calculation at the beginning of the proof of this lemma, the probability that $G_t$ contains a $3$-spider and one of its edges appears at the $t$-th step of the process (as the order in which the edges appear is uniform, conditioned on $E(G_t)$, each of the edges in $E(G_t)$ is the last one with probability $1/t$) is at most 
$$ n^7 \cdot \frac{\binom{N-(3n-6)}{t-6} }{\binom{N}{t}} \cdot \frac{3}{t} =(1+o(1)) \cdot n^7 \cdot \bfrac{t}{N}^6  \cdot \exp \left( -\frac{3tn}{N} \right)\cdot  \frac{3}{t}=O(\log^5 n) \cdot \exp \left( -\frac{6t}{n} \right).$$
Thus, the probability that there exist $t\in [t^-,n(\log n+1.1\log\log n]$ such that $G_t$ has a $3$-spider that is no present in $G_{t-1}$ is at most, 
\begin{eqnarray*}
& & o(1)+ O(\log ^5n )   \sum _{t=t^-}^{n(\log n+1.1\log\log n)} \exp \left(\frac{6t}{n}\right) = o(1)+  O(\log ^5n ) \cdot \frac{\exp\left(-\frac{-6t^-}{n}\right)}{1-e^{-6/n}}
\\&\leq &  o(1)+ O(\log ^5n ) \cdot \frac{\exp (-\log n -(6+o(1)) \log\log n)}{1/n} = o(1)+O(\log^{-1+o(1)}n)=o(1),
\end{eqnarray*}
where at the last inequality we use that bound $1-e^{-6/n} \ge \frac{1}{n}$, which holds for $n\ge 1$.
\end{proof}
To finish the proof of Theorem \ref{main2} observe that Observation \ref{obs:mu}, Lemma \ref{lemma:mu:process} and \eqref{eq:mutlarge} imply that, with high probability, for all $(n\log n)/6\leq t\leq (3n/\log n)/5$ we have that,
\begin{equation*}
    \mu(G_t)= \mu'(G_t)=  n_0(G) +\bigg \lceil  \frac{1}{2} \cdot \left( n_1(G) + s_3(G) \right) \bigg \rceil.
\end{equation*}
Parts \ref{main2:part4} and \ref{main2:part5}  of Theorem \ref{main2} follow from the above equation and Lemma \ref{lemma:bounding-ti}.

\section{Concluding remarks}

The definition of the completion number extends naturally to the setting of digraphs. In this setting as well, the completion number of a digraph $D$ with respect to Hamiltonicity is equal to the minimum number of vertex disjoint (directed) paths required to cover $V(D)$. It would be interesting to see if similar tools to those we used here can be used to prove an equivalent of Theorem \ref{thm:main1} in $D(n,p)$ (that is, to show that $\mu (D(n,p)) = (1+o(1))\cdot  \overrightarrow{f}(np)\cdot n$ with high probability, for an appropriate function $ \overrightarrow{f}$, in some range of $p$). In a recent paper, Anastos and Frieze \cite{AFDIR} studied a substructure of digraphs similar in idea to the strong $k$-core. This, in particular, may prove useful for studying $\mu (D(n,p))$. It should be noted that, in such an attempt, one should be careful in relegating the problem of bounding the number of paths in a minimum cover of $D$ to the problem of covering connected components outside of the (equivalent of the) strong core, since, in the directed case, the direction of the paths (``into the core" or ``away from the core") now matters.

The result in \cite{AFDIR}, combined with a recent result by Alon, Krivelevich and Lubetzky \cite{AKL}, show that, similar to the undirected case (see Theorem \ref{thm:ANAS}), if $c$ is a large enough constant then $\cL (D(n,c/n))$ with high probability contains all ``not too short" and ``not too long" cycle lengths. So, in adding arcs to create a pancylic digraph, only a few cycle lengths remain to be added. It would be interesting to see if $\hat{\mu}(D(n,p))$ is also equal to $\mu(D(n,p))$ with high probability.\\

\noindent \textbf{Acknowledgements.} The authors would like to express their thanks to the referees of the paper for their valuable input towards improving the presentation of our result.

\begin{appendix}
\section{Proof of Lemmas \ref{lemma:4coreProperties} and \ref{lem:muconcentration}}\label{app:sec:lemma:4coreProperties} 
\begin{proof}[Proof of Lemma \ref{lemma:4coreProperties}]
We start by proving \ref{4core:sizeX_i}. Let $T$ be a component of $G^{AB}$ that intersects $A(G)$ in $i$ vertices and $B(G)$ in $j$ vertices. During the colouring procedure that identifies the strong $4$-core of $G$, described earlier in Section \ref{sec:4core}, every time a vertex is coloured red at most $3$ of its neighbours are coloured blue. Thus, $j\leq 3i$. Additionally,  $G$ contains no edges between the $i$ vertices in $T\cap A(G)$ and the at least $n-4i$ vertices outside $T$. Finally, $T$ has a spanning tree. The expected value of $X_i$, where $i\le \log ^2n$, is therefore at most
 
\begin{eqnarray*}
\mathbb{E}(X_i) & \le & i\sum_{j=0}^{3i}\binom{n}{i+j}\cdot\binom{i+j}{j} \cdot (i+j)^{i+j-2}\cdot p^{i+j-1} \cdot (1-p)^{i\cdot (n-4i)} \\
& \le & i\sum_{j=0}^{3i}\bfrac{en}{i+j}^{i+j}\cdot 2^{i+j}\cdot (i+j)^{i+j-2}\cdot \bfrac{d}{n}^{i+j-1} \cdot 1.01e^{-inp} 
\\& = & \frac{1.01n e^{-id}}{d}\cdot \sum_{j=0}^{3i} \frac{i}{(i+j)^2} \cdot  (2ed)^{i+j} \leq  \frac{1.01n e^{-id}}{d}\cdot 1.05  \cdot \frac{i}{(i+3i)^2 } \cdot (2ed)^{i+3i} \leq  \frac{(2ed)^{4i} \cdot e^{-id}}{15id}\cdot n.
\end{eqnarray*}

For the case $i> \log ^2n$ consider the construction of $A(G)$ by the colouring procedure. For $j>0$, let $T_j$ be the subgraph of $T$ spanned by the vertices of $T$ that have colour red or blue after the first $j$ steps of the colouring procedure. 
Observe that at Step $j$, a vertex is coloured red and its (at most three) black neighbours are coloured blue, thus at most $4$ vertices are coloured in total. The edges that do not belong to $T_{j-1}$ but belong to $T_j$ are incident to those, at most $4$, vertices. Therefore, $T_j$ has at most $4$ vertices and $4\Delta(G)$ edges more than $T_{j-1}$. It follows that if $T_{j-1}$ is non-empty, then the maximum number of vertices spanned by a single component of $T_j$ is at most $4\Delta(G)+1$ times larger than the maximum number of vertices spanned by a single component of $T_{j-1}$. On the other hand, if $T_{j-1}$ is empty, then $T_j$ has at most $4$ vertices.
Therefore, if $T$ is a connected component of size at least $i> \log ^2n$, then there exists $j$ such that $T_j$ has a component $T'$ of size between $\log ^2n$ and, say, $(4\Delta (G)+1)\log ^2 n\leq 5\Delta (G)\cdot \log ^2 n$. Now, due construction, the vertices that have colour red after $j$ steps of the procedure in $T'$ have no neighbours outside $T'$. 
By the same arguments as in the case $i<
\log^2 n$ (with $T'$ in place of $T$, and considering only the first $j$ steps of the colouring procedure), we have
\begin{eqnarray}  \label{eqn:large-components}
\mathbb{E}(X_i) \le n\cdot \pr \left( \Delta (G) > \log ^2 n \right) +  \sum _{j=\log ^2n}^{5 \log ^4n} \frac{(2ed)^{4j}e^{-jd}}{15jd} \cdot n = o(n^{-6}).
\end{eqnarray}

For part \ref{4core:notrees}, similar to the calculations above, the probability that $G^{AB}$ has a component with more than $\log^2 n$ vertices, or a component that span a cycle with at least $\log\log n$ and at most $\log^2 n$ vertices is at most
\begin{align*}
 &   o(1)+ \sum_{i=0.5\log\log n}^{\log^2 n}\sum_{j=0}^{3i}\binom{n}{i+j}\cdot\binom{i+j}{j} \cdot (i+j)^{i+j-2}\cdot p^{i+j-1} \cdot (1-p)^{i\cdot (n-4i)} \cdot \binom{i+j}{2}p 
  \\&  \leq o(1)+ \sum_{i=0.5\log\log n}^{\log^2 n} \frac{(2ed)^{4i} \cdot e^{-id}}{15i^2d}\cdot n \cdot \binom{4i}{2}\cdot \frac{d}{n}
  \leq o(1)+ \sum_{i=0.5\log\log n}^{\log^2 n}   \left((2ed)^4e^{-d}\right)^i=o(1).
\end{align*}

For parts \ref{4core:sizeofAandS}--\ref{4core:3spiders}, we say that a graph $G$ has the property $\cP_n$ if $G^{AB}$ does not span a component of size larger than $\log^2 n$ and its maximum degree is at most $\log^2 n$. From \eqref{eqn:large-components} it follows that in our range of $p$ we have that $\Pr(G(n,p)\notin \cP_n) =o(n^{-4})+n\Pr(Bin(n,p)\geq \log^2 n)=o(n^{-4}).$
We will use the property $\cP_n$ in combination with the following observation. For $v\in [n]$ and a pair of graphs $G_1,G_2 \in \cP_n$ on $[n]$ that differ only on the edges incident to $v$ and $i=1,2$ denote by $D(G_i,v)$ the set of vertices that lie in some component of ${G_i}^{AB}$  that contain $v$ or a neighbour of $v$ in ${G_i}^{AB}$ (set $D(G_i,v)=\emptyset$ if no such component exists). Set $D= D(G_1,v)\cup D(G_2,v)$. Then, 
\begin{equation}\label{eq:corestability}
    C(G_1)\setminus D= C(G_2)\setminus D, \hspace{5mm}
    B(G_1)\setminus D= B(G_2)\setminus D, \hspace{5mm}\text{and} \hspace{5mm}
    A(G_1)\setminus D= A(G_2)\setminus D.
\end{equation}
Indeed, let $W(v,G_1,G_2)=N_{G_1}(v)\cup N_{G_2}(v)\cup\{v\}$. For each $G_1,G_2$, we may run the colouring process for identifying the strong $4$-core, always progressing a vertex not in $W(v,G_1,G_2)$, if such a vertex exists. We may run the two colouring processes in parallel, so that they are identical until a vertex in $W(v,G_1,G_2)$ is coloured red in one of the two processes. Just before that moment, $v$ has colour black in both colourings, and in extension, the colourings of the two graphs are indeed identical. Moreover, every vertex not in $W(v,G_1,G_2)$ that is not red has at least $4$ black neighbours. Hence, in both of the processes, every vertex that is recoloured after that moment belongs to the same component as some vertex in $W(v,G_1,G_2)$. Equivalently, if a vertex does not belong to $D$, then it receives the same colour from both processes.

$|D|$ is bounded by the size of the largest component in $G^{AB}_1$ or $G^{AB}_2$ times 
the size of $W(v,G_1,G_2)$, which is at most $d_{G_1}(v)+d_{G_2}(v)+1$. As $G_1,G_2\in \cP_n$ we have that  
\begin{equation}\label{eq:corestability2}
|D|\leq \log^2n(\log^2 n+\log^2 n+1)\leq 5\log^4 n.
\end{equation}

We are now ready to prove \ref{4core:sizeofAandS}. Observe that every vertex of degree $3$ that does not lie in a component of $G^{AB}$ with at least $2$ vertices in $A(G)$, determines a unique component in $S(G)$. Denote by $X_{\ge i} \coloneqq \sum _{j\ge i}X_j$. From our calculation in \ref{4core:sizeX_i} we have
\begin{align*}
    \mathbb{E}(|S(G)|) & \geq n\binom{n-1}{3}p^3(1-p)^{n-4}-\mathbb{E}(X_{\ge 2})
   \geq 0.16 d^3e^{-d} n - \sum_{i\geq 2} \frac{(2ed)^{4i}e^{-id}}{15id} 
   \\&\geq d^3e^{-d} n \left(0.16-  \sum_{i\geq 1} \frac{(2ed)^{4i}e^{-id}}{15i}  \right) \geq d^3e^{-d} n \left(0.16-  \sum_{i\geq 1} \frac{0.3^i}{15i}  \right) \geq 0.12 d^3e^{-d} n.
\end{align*} 

Thereafter let $G_1,G_2$ be as above, and observe that \eqref{eq:corestability} implies that every component that belongs to $S(G_1)\triangle S(G_2)$ intersects $D$. \eqref{eq:corestability2} states that $|D|\leq 5\log^4 n$. Hence, Lemma \ref{lemma:martingales} implies
$$
\pr(|S(G)|<\mathbb{E}(|S(G)|)-n^{0.51})\leq 2\exp\left( -\frac{\Omega((n^{0.51})^2)}{n(5\log^4 n+1)^2} \right)+n\cdot n\cdot \pr(G\notin \cP_n)=o(n^{-2}).
$$

Note that $d^3e^{-d} n =\Omega(n^{0.55})$ for $d\leq 0.45\log n$. Thus for $20\leq np\leq 0.45\log n$ we have that $|S(G)|\geq 0.12 d^3e^{-d} n -n^{0.51}\geq 0.1 d^3e^{-d} n$ with high probability.

Now, for $0.45\log n \leq np \leq \log n +1.1\log\log n$ we use Chebyshev's inequality. Simple calculations imply that in this range of $p$ we have that $\mathbb{E}(X_{\geq 2})=o(\mathbb{E}(n_3(G)))$, thus
$$(1+o(1))\mathbb{E}(n_3(G))=
\mathbb{E}(n_3(G)-X_{\geq 2})\leq \mathbb{E}(|S(G)|) \leq \mathbb{E}(n_3(G)),$$
$\mathbb{E}(n_3(G)^2) = (1+o(1))\cdot (\mathbb{E}(n_3(G)))^2$ and $\mathbb{E}(n_3(G)^2) = (1+o(1))\cdot \mathbb{E}(|S(G)|^2)$. Combining these inequalities gives
\begin{align*}
\pr (|S(G)| < 0.1d^3e^{-d} n)
&\leq \pr (|\mathbb{E}(|S(G)|)-|S(G)||\geq 0.1 \mathbb{E}(|S(G)|))
\\&= O\left( \frac{\text{Var}(|S(G)|)}{\mathbb{E}(|S(G)|)^2} \right)
=  \frac{o(\mathbb{E}(n_3(G))^2)}{\mathbb{E}(n_3(G))^2}  = o(1).
\end{align*}

Next we prove \ref{4core:eventE}. First observe that if $np \to \infty$ then by Theorem \ref{thm:LUC} already $[3,\log \log n] \subseteq \mathcal{L} (G)$ with high probability. We may therefore assume that $np = \Theta (1)$.

For $k\in [3,\log \log n]$ let $Y_k$ be the number of tree components of $G^{AB}$
on $k+1$ vertices $b,a_1,...,a_k$, with $b\in B(G)$ and $\{a_1,a_2,a_3,...,a_k\}\subseteq A(G)$, where $b$ is adjacent to $a_1,a_2$ and $a_3$ and $a_3a_4...a_k$ is a path. Similarly to the calculation in \ref{4core:sizeX_i}, given vertices $b',a_1',...,a_k'$  and conditioned on the event that (i) $b'$ is adjacent to $a_1',a_2'$ and $a_3'$, (ii) $a_3'a_4'...a_k'$ is a path, and (iii) the vertices $a_1',...,a_k'$ are not incident to any additional edges, the probability that there exists a components of $G^{ab}$ that spans $b',a_1',...,a_k'$, and $i+j-1$ additional vertices, $j\geq 0$, and it has $i\geq 1$ vertices in $A(G)\setminus \{a_1,...,a_k\}$ is at most
\begin{align*}
& \pr (G\notin \cP_n) + \sum_{i\geq 1} \sum_{j=0}^{3i}\binom{n-k-1}{i+j-1}\cdot\binom{i+j-1}{j-1} \cdot (i+j)^{i+j-2}\cdot p^{i+j-1} \cdot (1-p)^{i\cdot (n-4i)}
\\ &\leq o(1)+ (1+o(1))  \sum_{i\geq 1} \sum_{j=0}^{3i} \frac{1}{(i+j)} \bfrac{i+j}{i+j-1}^{i+j-1} (2ed)^{i+j-1} e^{-id}
\\ &\leq o(1)+ (1+o(1))  \sum_{i\geq 1}  2\cdot \frac{1}{4i} \cdot e \cdot (2ed)^{4i-1} e^{-id}\leq o(1)+ (1+o(1))  \sum_{i\geq 1}  2\cdot \frac{1}{4i}  \cdot e  \cdot \frac{0.3^i}{2ed}
\end{align*}
which is at most $\frac{1}{2}$. We therefore have
$$
\mathbb{E}(Y_k) \ge \binom{n}{k+1}\cdot p^k\cdot (1-p)^{k(n-k-1)+\binom{k+1}{2}-k}\cdot \frac{1}{2} = \Omega (n).
$$
For $v\in [n]$ and a pair of graphs $G_1,G_2 \in \cP_n$ on $[n]$, every component that is not present in both $G_1^{AB}$ and $G_2^{AB}$ intersects $D$, by \eqref{eq:corestability}. Thus,  \eqref{eq:corestability2} gives that $|Y_k(G_1)-Y_k(G_2)|\leq |D|\leq 5 \log^ 4n$. Hence, by Lemma \ref{lemma:martingales} we have that $Y_k \geq 1$ for $k\in [3,\log\log n]$, and therefore the event $\cE$ occurs, with probability at least
$$1-\left( 2\exp\left( -\frac{\Omega(n^2)}{n(5\log^4 n+1)^2} \right)+n\cdot n\cdot \pr(G\notin \cP_n)\right)=1-o(n^{-2}).$$

Finally, we prove \ref{4core:3spiders}. Let $\frac{20}{n}\leq p \leq p'$. Then,
$$
\mathbb{E}(s_3(G))\geq  n\cdot \binom{n}{3}^3  {p}^6 (1-p)^{3(n-7)} \ge \frac{1}{36}\cdot n^7 (p')^6 e^{-3n\cdot p'} \ge \frac{1}{2}\cdot 10^{-7}\cdot n^{2/3}.
$$
Thereafter, observe that if $G_1,G_2\in \cP_n$ differ only in edges adjacent to some vertex $v$, then a $3$-spider belongs only to one of $G_1,G_2$ only if it intersects $D$. Therefore,
$|s_3(G_1) - s_3(G_2)| \leq |D|\leq 5\log^4 n$, by \eqref{eq:corestability2}. Applying Lemma \ref{lemma:martingales} once again gives,
$$
\pr \left( s_3(G)\leq \frac{1}{2}\mathbb{E}(s_3(G)) \right) \leq 2 \exp\left( -\frac{\Omega(n^{4/3})}{2n(5\log^4 n+1)}\right) + n\cdot n\cdot \Pr(G\notin \cP_n) =o(n^{-2}).
$$
Thus $s_3(G)\geq 10^{-8}\cdot n^{2/3}$ with probability $1-o(n^{-2})$.
\end{proof}

\begin{proof}[Proof of Lemma \ref{lem:muconcentration}]
Let $\cP_n$ be as in the proof of Lemma \ref{lemma:4coreProperties} \ref{4core:sizeofAandS}. Then, as in the proof of Lemma \ref{lemma:4coreProperties} \ref{4core:sizeofAandS}, if we let $G_1,G_2\in \cP_n$ be such that the intersection of all the edges in $E(G_1)\triangle E(G_2)$ is incident to some vertex $v$, then the components that are not present in both $G_1^{AB}$ and $G_2^{AB}$ spanned at most $5\log^4 n$ vertices (see \eqref{eq:corestability} and \eqref{eq:corestability2}). Hence, $|\mu'(G_1)-\mu'(G_2)|\leq 5\log^4 n$. Lemma \ref{lemma:martingales} implies
$$
\pr(|\mathbb{E}(\mu'(G))-\mu'(G)|\geq n^{0.51})=o(1).
$$

\end{proof}

\section{Proof of Lemma \ref{lem:4coreProperties2}}\label{sec:app:4coreProperties2} 

\begin{proof}
First observe that if $np\geq \log n+0.1\log\log n$ then with high probability $n_0(G)=0$ and, by Lemma \ref{lemma:4coreProperties}\ref{4core:sizeX_i}, every component of $G^{AB}$ contains exactly $1$ vertex in $A(G)$, and therefore in this range $G^{AB}$ contains no component with two vertices in $A(G)$ and no isolated vertices.

For the range $20 \le np \le \log n+0.1\log\log n$ we follows similar lines to the proof of Lemma \ref{lemma:4coreProperties}\ref{4core:sizeofAandS} and \ref{4core:eventE}
 and show that, with high probability, there are at least two components comprised of a single vertex in $A(G)$ and a single vertex in $B(G)$. Let $Y$ be the number of such components.
As in the proof of Lemma \ref{lemma:4coreProperties}\ref{4core:eventE} the following holds.  For fixed vertices $a,b$, conditioned that $a$ has a single neighbour in $G$, which is $b$, the probability that there exists a components of $G^{ab}$ that spans $1+i+j$ vertices, $j\geq 0$, and it has $i\geq 1$ vertices in $A(G)\setminus \{a\}$ is at most $\frac{1}{2}$. Thus, 
$$
\mathbb{E}(Y)\geq n\cdot (n-1) \cdot p(1-p)^{n-2}\cdot \frac{1}{2}\geq \frac{de^{-d}n}{3}.
$$
If $d\le 0.45\log n$ then this is at least $n^{0.55}$, and we can apply Lemma \ref{lemma:martingales} as in \ref{lemma:4coreProperties}\ref{4core:eventE}. Otherwise, we apply Chebyshev's inequality in the same manner as in Lemma \ref{lemma:4coreProperties}\ref{4core:sizeofAandS}, by utilizing the fact that in this range $\mathbb{E}(X_{\ge 2}) \ll de^{-d}n$.

\end{proof}

\section{Proof of Lemma \ref{lem:4coreProperties:process}}\label{sec:app:4coreProperties:process}  
\begin{proof}
Let $\sigma=\frac{1}{2}n(\log n+\frac{1}{2}\log\log n)$. Let $\cE$ be the event that $G_\sigma$ is connected,  $G_\sigma$ does not contain a pair of vertices $u,v$ within distance at most $3$ from each other, such that $u$ has degree $1$ and the degree of $v$ does not lie in $[0.9\log n,1.1\log n]$, and $G_\sigma$ spans at least two and at most $\log^2 n$ vertices of degree $1$. One can easily show that $\cE$ occurs with high probability.
In the event $\cE$ we have that $\cE_1(G_t)$ occurs for all $t\geq \sigma$ with high probability. 

Assume that $\cE$ occurs, let $u,v$ be two vertices of $G_\sigma$ of degree $1$ and $C_u,C_v$ the components of ${G_\sigma}^{AB}$ containing $u$ and $v$ respectively. Let $G_\sigma(0.2)$ be the random subgraph of $G_\sigma$ where each edge of $G_\sigma$ is present with probability $0.2$, independently. Observe that one can couple $G_\sigma(0.2)$ and $G_{(n\log n)/6}$ such that  $G_\sigma(0.2) \subseteq G_{(n\log n)/6}$ with high probability. Lemma \ref{lemma:4coreProperties} \ref{4core:sizeX_i} and Lemma \ref{lemma:equiv} imply that with high probability, for $\frac{1}{6}n\log n\leq t\leq \sigma$,
no component of ${G_t}^{AB}$ spans more than $10$ vertices in $A(G_t)$, thus more than $40$ vertices in total, and by extension a vertex of $A(G_t)$ that has degree at least $40$ in $G_t$.
Now $\cE_1(G_t)$ occurs if $C_u,C_v\in S(G_t)$. Therefore, for 
$\frac{1}{6}n\log n\leq t\leq \sigma$, the event $\cE_1(G_t)$ does not occur only if there exists $w\in\{u,v\}$ and a vertex $w'$ such that $w'$ is within distance at most $3$ from $w$ in $G_t$, hence in $G_\sigma$, and $w'$ has degree at most $40$ neighbours in $G_t$, hence in $G_\sigma(0.2)$. In the event $\cE$, there are at most $(\log^2 n)^2$ pairs of degree $1$ vertices $\{u,v\}$, and at most $2(1.1\log n)^3$ vertices within distance at most $3$ from $\{u,v\}$ each having degree at least $0.9\log n$. Therefore the probability that $\cE_1(G_t)$ does not occur for some $\frac{1}{6}n\log n\leq t\leq \sigma$ is bounded above by 
$$\pr(\cE)+ O(\log^5 n)\cdot \pr(\Bin(0.9\log n,0.2)\leq 40)=o(1).$$
\end{proof}

\section{Proof of Lemma \ref{lem:algebra}}\label{app:lemma:algebra} 
\begin{proof} Let $N=\binom{n}{2}$. Then,
    \begin{align*}
       &  \frac{\binom{N-(xn-y)}{t-z}}{\binom{N}{t}}= \frac{\prod_{i=0}^{t-z-1}\frac{N-(xn-y)-i}{(t-z)!}}{\prod_{i=0}^{t-z}\frac{N-i}{t!}} = \prod_{i=0}^{t-z-1}\frac{N-(xn-y)-i}{N-i} \cdot \prod_{i=0}^{z-1}(t-i) \prod_{i=t-z}^{t-1} \cdot \frac{1}{N-i}
      \\&= (1+o(1)) \bfrac{t}{N}^z \cdot \prod_{i=0}^{t-z-1}\left(1-\frac{(xn-y)}{N-i}\right) = (1+o(1)) \bfrac{t}{N}^z \cdot \exp\left( \sum_{i=0}^{t-z-1}-\frac{(xn-y)}{N-i} +O\bfrac{n^2}{N^2}\right)   
      \\&= (1+o(1)) \bfrac{t}{N}^z \cdot \exp\left( -\frac{xnt}{N} +O\bfrac{n^2t}{N^2}\right)=(1+o(1)) \bfrac{t}{N}^z \cdot \exp\left( -\frac{xnt}{N}\right).   
    \end{align*}
\end{proof}

\section{Proof of Lemma \ref{lemma:proofcomps}}\label{app:lemma:proof3comps} 

\begin{proof}
Let $N=\binom{n}{2}$. Let $\mathcal{B}$ be the event that there exist $t\in [t^-,n\log n]$ such that $G_t^{AB}$ contains a component with more that $10$ vertices in $A(G_{t})$. Lemmas \ref{lemma:4coreProperties} and \ref{lemma:equiv} imply that,
$$
\Pr(\mathcal{B}) \leq \sum_{i=10}^{\log^2 n}\sum_{t=t^-}^{n\log n} O(\sqrt{t})  \left((4et/(n-1))e^{-2t/(n-1)}\right)^i\cdot n+o(1)=o(1).
$$

Thereafter, if $T$ is a connected component of $G_t^{AB}$ for some $t\in [t^-,n\log n]$, then with $S=V(T)\cap A(G^t)$ and $R=V(T)\cap B(G^t)$ the following holds, with $s=|S|$ and $r=|R|$. (i) $s\leq 3r$,  (ii) $G_{n\log n}[S\cup R]$ contains a spanning tree $T'$, (iii) there exist times $t_1,....,t_{s+r-1}\in [1,n\log n]$ such that at the $t_j$th edge of the process belongs to $E(T')$, for $j=1,...,s+r-1$ (this occurs with probability at most $\bfrac{(s+r-1)n\log n}{N-n\log n}^{s+r-1}$) and (iv) none of the first $t^-$ edges of $G_{n\log n}$, that are not spanned by $T'$ belong to the $s(n-s-r)$ edges from $S$ to $V\setminus (S\cup R)$. 

For $1\leq s \leq 10$ and  $0\leq r\leq 3s$, let $X_{s,r}$ be the number of pairs of disjoint sets $S,R\subseteq V$ of size $s$ and $r$ respectively that satisfy (i)-(iv).  Then,

\begin{eqnarray*}
\mathbb{E}(X_{s,r})& = & n^{s+r}\cdot (s+r)^{s+r-2} \cdot \bfrac{(s+r-1)n\log n}{N-n\log n}^{s+r-1}\cdot 
\frac{\binom{(N-(s+r-1))-s(n-s-r)}{t^-}}{\binom{N-(s+r-1)}{t^-}}
\\ & = & O(1) \cdot n\cdot (\log n)^{s+r-1} \cdot \exp\left( -\frac{st^-n}{N}\right) 
\\ & \leq  &  O(1) n\cdot (\log n)^{s+r-1} \cdot (4s)^{4s} \cdot  \exp \left( -\frac{1}{3}s\log n - 1.8s\log \log n +o(1) \right) 
\\ & = & O(1)\cdot n^{1-\frac{1}{3}s} \cdot \log ^{r-0.8s-1} n.
\end{eqnarray*}
In the event that there exist  $t\in [t^-, \tau_{\mathcal{H}}]$, such that no every connected component of ${G_t}^{AB}$ is a tree with at most two vertices from $A(G_t)$, or a tree with three vertices from $A(G_t)$ and at least four vertices from $B(G_t)$ one of the following holds. Either (i) $\tau_{\mathcal{H}} >n\log n$ or (ii) the event $\mathcal{B}$ occurs or (iii) $Y_{s,r}\geq 1$ for some $0\leq s\leq 10$ and $0\leq r\leq 3s$ or (iv) $X_{s,r}\geq 1$ for some $s=3$ and $r\leq 3$ or (v)  $X_{s,r}\geq 1$ for some $4\leq s\leq 10$ and $0\leq r\leq 3s$. This occurs with probability at most 
$$
o(1)+ O(1)\cdot \sum _{r=0}^{3} \log ^{r-3.4}n + O(1)\cdot \sum_{s=4}^{10} \sum _{r=0}^{3s} O(1)\cdot n^{1-\frac{1}{3}s} \cdot \log ^{r-0.8s-1} n=o(1).$$

\end{proof}

\end{appendix}

\end{document}